\newtheorem{theorem}{Theorem}[section]
\newtheorem{lemma}[theorem]{Lemma}
\newtheorem{proposition}[theorem]{Proposition}
\newtheorem{assumption}[theorem]{Assumption}
\newcommand{\nv}{\nu}
\DeclareMathOperator{\I}{I}
\newcommand{\dd}{\mathrm{d}}
\DeclareMathOperator{\diveps}{div\epsilon}
\newcommand{\ol}[1]{\overline{#1}}
\newcommand{\spl}{\langle}
\newcommand{\spr}{\rangle}
\newcommand{\bpm}{\begin{pmatrix}}
\newcommand{\epm}{\end{pmatrix}}
\renewcommand{\div}{\operatorname{div}}
\DeclareMathOperator{\curl}{curl}
\DeclareMathOperator{\tr}{tr}
\DeclareMathOperator{\spn}{span}
\renewcommand{\dim}{\operatorname{dim}}
\DeclareMathOperator{\ran}{ran}
\newcommand{\setC}{\mathbb{C}}
\newcommand{\setM}{\mathbb{M}}
\newcommand{\setN}{\mathbb{N}}
\newcommand{\setR}{\mathbb{R}}
\newcommand{\boldH}{\mathbf{H}}
\newcommand{\boldL}{\mathbf{L}}
\newcommand{\boldT}{\mathbf{T}}
\newcommand{\boldX}{\mathbf{X}}
\title[Existence of electromagnetic Stekloff eigenvalues]{
Electromagnetic Stekloff eigenvalues: existence and behavior in the selfadjoint case}
\author{Martin Halla}
\email{martin.halla@protonmail.com}
\subjclass[2010]{{35J25, 35R30, 35P99.}}
\keywords{Stekloff eigenvalues, spectral analysis, nondestructive testing.}
\date{September 2nd, 2019.}
\begin{document}
\begin{abstract}
In [Camano, Lackner, Monk, SIAM J.\ Math.\ Anal., Vol.\ 49, No.\ 6, pp.\ 4376-4401 (2017)] it was suggested to use
Stekloff eigenvalues for Maxwell equations as target signature for nondestructive testing via inverse scattering.
The authors recognized that in general the eigenvalues due not correspond to the spectrum of a compact operator and
hence proposed a modified eigenvalue problem with the desired properties. The Fredholmness and the approximation of
both problems were analyzed in [Halla, arXiv:1909.00689 (2019)].

The present work considers the original eigenvalue problem in the selfadjoint case. We report that apart for a countable
set of particular frequencies, the spectrum consists of three disjoint parts: The essential spectrum consisting of the
point zero, an infinite sequence of positive eigenvalues which accumulate only at infinity and an infinite sequence of
negative eigenvalues which accumulate only at zero.

The analysis is based on a representation of the operator as block operator. For small/big enough eigenvalue parameter
the Schur-complements with respect to different components can be build. For each Schur-complement the existence of an
infinite sequence of eigenvalues is proved via a fixed point technique similar to [Cakoni, Haddar, Applicable Analysis,
88:4, 475-493 (2009)]. The modified eigenvalue problem considered in the above references arises as limit of one of the
Schur-complements.
\end{abstract}
\maketitle

\section{Introduction}\label{sec:introduction}
Novel nondestructive evaluation methods based on inverse scattering~\cite{CakoniColton:06} give rise to a multitude of
new eigenvalue problems. Among these are so-called transmission eigenvalue problems~\cite{CakoniColtonHaddar:16}
and Stekloff eigenvalue problems~\cite{CakoniColtonMengMonk:16}. Not all of these eigenvalue problems fall into classes
which are covered in classical literature. Among the important questions on these eigenvalue problems are
\begin{itemize}
 \item Fredholm properties (which imply the discreteness of the spectrum),
 \item the existence of eigenvalues,
 \item properties of the eigenvalues
 \item and reliable computational approximations.
\end{itemize}
The electromagnetic Stekloff eigenvalue problem to find $(\lambda,u)$ so that
\begin{align*}
\curl\curl u-\omega^2\epsilon u &=0 \quad\text{in }\Omega,\\
\nv\times\curl u +\lambda\, \nv\times u\times \nv&=0\quad\text{at }\partial\Omega.
\end{align*}
was considered in the recent publication~\cite{CamanoLacknerMonk:17}. Therein the authors of~\cite{CamanoLacknerMonk:17}
considered the case that $\Omega$ is a ball and the material parameter $\epsilon$ is constant. For this setting they
proved the existence of two infinite sequences of eigenvalues, one converging to zero and one converging to infinity.
Consequently the eigenvalue problem can't be transformed to an eigenvalue problem for a compact operator.
This observation led the authors of~\cite{CamanoLacknerMonk:17} to discard the original eigenvalue problem and
to modify instead the boundary condition to obtain a different eigenvalue problem. The approximation of both eigenvalue
problems is discussed in the companion article \cite{Halla:19StekloffAppr} by means of \cite{Halla:19Tcomp}.\\

In this article we consider the original electromagnetic Stekloff eigenvalue problem in the selfadjoint case.
We give a complete description of the spectrum (see Proposition~\ref{prop:mainprop}): The spectrum consists of three
disjoint parts: The essential spectrum consisting of the point zero, an infinite sequence of positive eigenvalues which
accumulate only at infinity and an infinite sequence of negative eigenvalues which accumulate only at zero.

As a side result, we also analyze the spectrum of the modified electromagnetic Stekloff eigenvalue problem, see
Section~\ref{sec:conclusion}. Our analysis reveals that the modified eigenvalue problem arises as asymptotic limit
of the original eigenvalue problem for large spectral parameter. Though, this doesn't yield any non-trivial asymptotic
statement on the eigenvalues.\\

The remainder of this article is organized as follows. In Section~\ref{sec:generalsetting} we set our notation and
formulate our assumptions on the domain and the material parameters. We also recall some classic regularity, embedding
and decomposition results which will be essential for our analysis. In most cases the respective references don't apply
directly to our setting and hence we formulate adapted variants.
In Section~\ref{sec:evp} we introduce the considered electromagnetic Stekloff eigenvalue problem and define the
associated holomorphic operator function $A_X(\cdot)$. We report in Theorem~\ref{thm:SpecBasic} that the spectrum of
$A_X(\cdot)$ is real and that $A_X(\lambda)$ is Fredholm if and only if $\lambda\neq0$.
In Section~\ref{sec:spectrumzero} we analyze the spectrum in a neighborhood of zero. We report in
Theorem~\ref{thm:SpecGapZero} that there exists $c_0>0$ so that $\sigma\big(A_X(\cdot)\big)\cap(0,c_0)=\emptyset$.
We report in Theorem~\ref{thm:existencelambdanegative} the existence of an infinite sequence of negative eigenvalues
which accumulate at zero.
In Section~\ref{sec:spectruminfty} we analyze the spectrum in a neighborhood of infinity. We report in
Theorem~\ref{thm:SpecGapInfty} that there exists $c_\infty>0$ so that $\sigma\big(A_X(\cdot)\big)\cap(-\infty,-c_\infty)
=\emptyset$. We report in Theorem~\ref{thm:existencelambdapositiv} the existence of an infinite sequence of positive
eigenvalues which accumulate at $+\infty$.
In Section~\ref{sec:conclusion} we collect our results in Proposition~\ref{prop:mainprop} and comment on the connection
between the original and the modified electromagnetic Stekloff eigenvalue problems.

\section{General setting}\label{sec:generalsetting}
In this section we set our notation and formulate assumptions on the domain and material parameters.
We also recall necessary results from different literature and adapt them to our setting.

\subsection{Functional analysis}
For generic Banach spaces $(X, \|\cdot\|_X)$, $(Y, \|\cdot\|_Y)$ denote $L(X,Y)$
the space of all bounded linear operators from $X$ to $Y$ with operator norm
$\|A\|_{L(X,Y)}:=\sup_{u\in X\setminus\{0\}} \|Au\|_Y/\|u\|_X$, $A\in L(X,Y)$.
We further set $L(X):=L(X,X)$. For generic Hilbert spaces
$(X, \spl\cdot,\cdot\spr_X)$, $(Y, \spl\cdot,\cdot\spr_Y)$ and $A\in L(X,Y)$ we denote $A^*\in L(Y,X)$ its adjoint
operator defined through $\spl u,A^*u' \spr_X=\spl Au,u'\spr_Y$ for all $u\in X,u'\in Y$.
Let $K(X,Y)\subset L(X,Y)$ be the space of compact operators and $K(X):=K(X,X)$.

We say that an operator $A\in L(X)$ is coercive, if $\inf_{u\in X\setminus\{0\}}
|\spl Au,u\spr_X|/\|u\|^2_X>0$. We say that $A\in L(X)$ is weakly coercive, if there
exists $K\in K(X)$ so that $A+K$ is coercive.
Let $\Lambda\subset\setC$ be open and consider an operator function $A(\cdot)\colon \Lambda\to L(X)$.
We call $A(\cdot)$ (weakly) coercive if $A(\lambda)$ is (weakly) coercive for all $\lambda\in\Lambda$.
We denote the spectrum of $A(\cdot)$ as
$\sigma\big(A(\cdot)\big):=\{\lambda\in\Lambda\colon A(\lambda)\text{ is not bijective}\}$ and
the resolvent set as $\rho\big(A(\cdot)\big):=\Lambda\setminus\sigma\big(A(\cdot)\big)$.
We denote $\sigma_\mathrm{ess}\big(A(\cdot)\big):=\{\lambda\in\Lambda\colon A(\lambda)\text{ is not Fredholm}\}$
the essential spectrum. For $A\in L(X)$ we set $\sigma(A):=\sigma\big(\cdot I-A\big)$,
$\sigma_\mathrm{ess}(A):=\sigma_\mathrm{ess}\big(\cdot I-A\big)$ and $\rho(A):=\rho\big(\cdot I-A\big)$.

\subsection{Lebesgue and Sobolev spaces}
Let $\Omega\subset\setR^3$ be a bounded path connected open Lipschitz domain and $\nv$ the outer unit
normal vector at $\partial\Omega$. Let $C^\infty_0(\Omega)$ be the space of infinitely many times differentiable
functions from $\Omega$ to $\setC$ with compact (closure of the) support in $\Omega$. We use standard notation for Lebesgue
and Sobolev spaces $L^2(\Omega)$, $L^\infty(\Omega)$, $W^{1,\infty}(\Omega)$, $H^s(\Omega)$ defined on the
domain $\Omega$ and $L^2(\partial\Omega)$, $H^s(\partial\Omega)$ defined on the
boundary $\partial\Omega$. We recall the continuity of the trace operator
$\tr \in L\big(H^s(\Omega), H^{s-1/2}\big)$ for all $s>1/2$.
For a vector space $X$ of scalar valued functions we
denote its bold symbol as space of three-vector valued functions
$\boldX:=X^3=X\times X\times X$, e.g.\ $\boldL^2(\Omega)$, $\boldH^s(\Omega)$,
$\boldL^2(\partial\Omega)$, $\boldH^s(\partial\Omega)$. For $\boldL^2(\partial\Omega)$
or a subspace, e.g.\ $\boldH^s(\partial\Omega), s>0$, the subscript $t$ denotes
the subspace of tangential fields. In particular $\boldL^2_t(\partial\Omega)=
\{u\in \boldL^2(\partial\Omega)\colon \nv\cdot u=0\}$ and $\boldH^s_t(\partial\Omega)=
\{u\in \boldH^s(\partial\Omega)\colon \nv\cdot u=0\}$. Let further $H^1_0(\Omega)$
be the subspace of $H^1(\Omega)$ of all functions with vanishing Dirichlet trace,
$H^1_*(\Omega)$ be the subspace of $H^1(\Omega)$ of all functions with vanishing mean,
i.e.\ $\spl u,1\spr_{L^2(\Omega)}=0$ and $H^1_*(\partial\Omega)$ be the subspace of
$H^1(\partial\Omega)$ of all functions with vanishing mean $\spl u,1 \spr_{L^2(\partial\Omega)}=0$.

\subsection{Additional function spaces}
Denote $\partial_{x_i} u$ the partial derivative of a function $u$ with respect
to the variable $x_i$. Let
\begin{align*}
\nabla u&:=(\partial_{x_1}u,\partial_{x_2}u,\partial_{x_3}u)^\top,\\
\div (u_1,u_2,u_3)^\top&:=\partial_{x_1}u_1+\partial_{x_2}u_2+\partial_{x_3}u_3,\\
\curl (u_1,u_2,u_3)^\top&:=(-\partial_{x_2}u_1+\partial_{x_1}u_3,
\partial_{x_3}u_1-\partial_{x_1}u_3,-\partial_{x_2}u_1+\partial_{x_1}u_2)^\top.
\end{align*}
For $\epsilon\in\big(L^\infty(\Omega)\big)^{3x3}$ let $\diveps u:=\div(\epsilon u)$.
For a bounded Lipschitz domain $\Omega$ let $\nabla_\partial, \div_\partial$ and $\curl_\partial=\nu\times\nabla_\partial$
be the respective differential operators for functions defined on $\partial\Omega$.
We recall that for $u\in\boldL^2(\Omega)$ with $\curl u\in\boldL^2(\Omega)$ the
tangential trace $\tr_{\nv\times}u\in \boldH^{-1/2}(\div_\partial;\partial\Omega):=\{u \in
\boldH^{-1/2}(\partial\Omega)\colon \div_\partial u\in H^{-1/2}(\partial\Omega)\}$,
$\|u\|^2_{\boldH^{-1/2}(\div_\partial;\partial\Omega)}:=\|u\|^2_{\boldH^{-1/2}(\partial\Omega)}
+\|\div_\partial u\|^2_{H^{-1/2}(\partial\Omega)}$ is well defined and
$\|\tr_{\nv\times}u\|_{\boldH^{-1/2}(\div_\partial;\partial\Omega)}^2$
is bounded by a constant times $\|u\|^2_{\boldL^2(\Omega)}+\|\curl u\|^2_{\boldL^2(\Omega)}$.
Likewise for $u\in\boldL^2(\Omega)$ with $\div u\in L^2(\Omega)$
the normal trace $\tr_{\nv\cdot}u\in H^{-1/2}(\partial\Omega)$ is well defined
and $\|\tr_{\nv\cdot}u\|_{H^{-1/2}(\partial\Omega)}^2$ is
bounded by a constant times $\|u\|_{\boldL^2(\Omega)}^2+\|\div u\|_{L^2(\Omega)}^2$.
Likewise for $u\in\boldL^2(\Omega)$ with $\diveps u\in L^2(\Omega)$
the normal trace $\tr_{\nv\cdot\epsilon}u\in H^{-1/2}(\partial\Omega)$ is well defined and
$\|\tr_{\nv\cdot\epsilon}u\|_{H^{-1/2}(\partial\Omega)}^2$ is bounded by a constant times
$\|\epsilon u\|_{\boldL^2(\Omega)}^2+\|\diveps u\|_{L^2(\Omega)}^2$.
For $\dd\in\{\curl,\div,\diveps,\tr_{\nv\times},\tr_{\nv\cdot},\tr_{\nv\cdot\epsilon}\}$ let
\begin{subequations}
\begin{align}
L^2(\dd):=\left\{\begin{array}{ll}
\boldL^2(\Omega),&\dd=\curl,\\
L^2(\Omega),&\dd=\div,\diveps,\\
\boldL^2_t(\partial\Omega),&\dd=\tr_{\nv\times},\\
L^2(\partial\Omega),&\dd=\tr_{\nv\cdot},\tr_{\nv\cdot\epsilon}
\end{array}\right..
\end{align}
Let
\begin{align}
\begin{aligned}
H(\dd;\Omega)&:=\{u\in L^2(\Omega)\colon \dd u\in L^2(\dd)\},\\
\spl u,u'\spr_{H(\dd;\Omega)}&:=\spl u,u'\spr_{L^2(\Omega)}
+\spl \dd u,\dd u'\spr_{L^2(\dd)},
\end{aligned}
\end{align}
\begin{align}
H(\dd^0;\Omega)&:=\{u\in H(\dd;\Omega)\colon \dd u=0\}.
\end{align}
Also for
\begin{align*}
\dd_1,\dd_2,\dd_3,\dd_4\in
\{&\curl,\div,\diveps,\tr_{\nv\times},\tr_{\nv\cdot},\tr_{\nv\cdot\epsilon},\\
&\curl^0,\div^0,\diveps^0,\tr_{\nv\times}^0,\tr_{\nv\cdot}^0,\tr_{\nv\cdot\epsilon}^0\}
\end{align*}
let
\begin{align}
\begin{aligned}
H(\dd_1,\dd_2;\Omega)&:=H(\dd_1;\Omega)\cap H(\dd_2;\Omega),\\
\spl u,u'\spr_{H(\dd_1,\dd_2;\Omega)}&:=\spl u,u'\spr_{L^2(\Omega)}
+\spl \dd_1 u,\dd_1 u'\spr_{L^2(\dd_1)}+\spl \dd_2 u,\dd_2 u'\spr_{L^2(\dd_2)},
\end{aligned}
\end{align}
\begin{align}
\begin{aligned}
H(\dd_1,\dd_2,\dd_3;\Omega)&:=H(\dd_1;\Omega)\cap H(\dd_2;\Omega)\cap H(\dd_3;\Omega),\\
\spl u,u'\spr_{H(\dd_1,\dd_2,\dd_3;\Omega)}&:=\spl u,u'\spr_{L^2(\Omega)}
+\spl \dd_1 u,\dd_1 u'\spr_{L^2(\dd_1)}+\spl \dd_2 u,\dd_2 u'\spr_{L^2(\dd_2)}\\
&+\spl \dd_3 u,\dd_3 u'\spr_{L^2(\dd_3)},
\end{aligned}
\end{align}
and
\begin{align}
\begin{aligned}
H(\dd_1,\dd_2,\dd_3,\dd_4;\Omega)&:=H(\dd_1;\Omega)\cap H(\dd_2;\Omega)
\cap H(\dd_3;\Omega)\cap H(\dd_4;\Omega),\\
\spl u,u'\spr_{H(\dd_1,\dd_2,\dd_3,\dd_4;\Omega)}&:=\spl u,u'\spr_{L^2(\Omega)}
+\spl \dd_1 u,\dd_1 u'\spr_{L^2(\dd_1)}+\spl \dd_2 u,\dd_2 u'\spr_{L^2(\dd_2)}\\
&+\spl \dd_3 u,\dd_3 u'\spr_{L^2(\dd_3)}+\spl \dd_4 u,\dd_4 u'\spr_{L^2(\dd_4)}.
\end{aligned}
\end{align}
\end{subequations}

\subsection{Assumption on the domain and material parameters}
\begin{assumption}[Assumption on $\epsilon$]\label{ass:eps}
Let $\epsilon\in \big(L^\infty(\Omega)\big)^{3x3}$ be a real, symmetric matrix function so that there exists
$c_\epsilon>0$ with
\begin{subequations}
\begin{align}
c_\epsilon |\xi|^2 &\leq \xi^H \epsilon(x) \xi
\end{align}
\end{subequations}
for all $x\in\Omega$ and all $\xi\in\setC^3$.
We further assume that there exists a Lipschitz domain $\hat\Omega\subset\Omega$ so that the closure of
$\hat\Omega$ is compact in $\Omega$ and $\epsilon|_{\Omega\setminus\hat\Omega}$ equals the
identity matrix $\I_{3\times3}\in\setC^{3\times3}$. 
\end{assumption}
We note that a generalization of Assumption~\ref{ass:eps} to $\epsilon|_{\Omega\setminus\hat\Omega}\in
W^{1,\infty}(\Omega\setminus\hat\Omega)$ seems possible. Let $\check\Omega\subset\Omega$ be a Lipschitz domain
so that the closure of $\check\Omega$ is compact in $\Omega$ and the closure of $\hat\Omega\subset\check\Omega$ is
compact in $\check\Omega$.
Let $\chi$ be infinitely many times differentiable, so that $\chi|_{\Omega\setminus\check\Omega}=1$ and
$\chi|_{\hat\Omega}=0$.

\begin{assumption}[Assumption on $\mu$]\label{ass:mu}
Let $\mu^{-1}\in \big(L^\infty(\Omega)\big)^{3x3}$ be a real, symmetric matrix function so that there exists
$c_\mu>0$ with
\begin{subequations}
\begin{align}
c_\mu |\xi|^2 &\leq \xi^H \mu^{-1}(x) \xi
\end{align}
\end{subequations}
for all $x\in\Omega$ and all $\xi\in\setC^3$.
We further assume that $\mu|_{\Omega\setminus\hat\Omega}$ equals the identity matrix $\I_{3\times3}\in\setC^{3\times3}$.
\end{assumption}

\begin{assumption}[Assumption on $\Omega$]\label{ass:Domain}
Let $\Omega\subset\setR^3$ be a bounded path connected Lipschitz domain so that there
exists $\delta>0$ and the following shift theorem holds on $\Omega$:
Let $f\in L^2(\Omega)$, $g\in H^{1/2}(\partial\Omega)$ with
$\spl g,1\spr_{L^2(\partial\Omega)}=0$ and $w\in H^1_*(\Omega)$ be the solution to
\begin{subequations}
\begin{align}
-\Delta w &= f\quad\text{ in }\Omega,\\
n\cdot\nabla w&= g\quad\text{ at }\partial\Omega.
\end{align}
\end{subequations}
Then the linear map $(f,g)\mapsto w\colon L^2(\Omega)\times
H^{1/2}(\partial\Omega)\to H^{3/2+\delta}(\Omega)$ is well defined and continuous.
\end{assumption}
The above assumption holds e.g.\ for smooth domains and Lipschitz polyhedral~\cite[Corollary~23.5]{Dauge:88}.

\begin{assumption}[Assumption on $\Omega, \epsilon$ and $\mu^{-1}$]\label{ass:UCP}
Let $\epsilon, \mu^{-1}$ and $\Omega$ be so that a unique continuation principle holds, i.e.\ if $u\in H(\curl;\Omega)$
solves
\begin{subequations}
\begin{align}
\curl\mu^{-1}\curl u -\omega^2\epsilon u&=0\quad\text{in }\Omega,\\
\tr_{\nv\times} u&=0\quad\text{at }\partial\Omega,\\
\tr_{\nv\times} \mu^{-1}\curl u&=0\quad\text{at }\partial\Omega,
\end{align}
\end{subequations}
then $u=0$.
\end{assumption}
To our knowledge the most general todays available result on the unique continuation principle for Maxwells equations
is the one of Ball, Capdeboscq and Tsering-Xiao~\cite{BallCapdeboscqTsering-Xiao:12}. It essentially requires $\epsilon$
and $\mu^{-1}$ to be piece-wise $W^{1,\infty}$.

\subsection{Trace regularities and compact embeddings}
We recall a classical result from Costabel~\cite{Costabel:90}:
\begin{subequations}\label{eq:CostabelTrace}
\begin{align}
\tr_{\nv\cdot}\in  L\big(H(\curl,\div,\tr_{\nv\times};\Omega), L^2(\partial\Omega)\big),\\
\tr_{\nv\times}\in L\big(H(\curl,\div,\tr_{\nv\cdot};\Omega), \boldL^2_t(\partial\Omega)\big),
\end{align}
\end{subequations}
and
\begin{align}\label{eq:CostabelDomain}
\begin{aligned}
&\text{The embeddings from }H(\curl,\div,\tr_{\nv\cdot};\Omega)\text{ and }\\
&H(\curl,\div,\tr_{\nv\times};\Omega)\text{ to } \boldH^{1/2}(\Omega)\text{ are bounded}.
\end{aligned}
\end{align}
We adapt the trace results of Costabel to our setting in the next lemmata.

\begin{lemma}\label{lem:ntrequal}
Let $\epsilon$ suffice Assumption~\ref{ass:eps}. Thence
\begin{align*}
\tr_{\nv\cdot} \in L\big(H(\diveps;\Omega), H^{-1/2}(\partial\Omega)\big) 
\quad\text{and}\quad \tr_{\nv\cdot}=\tr_{\nv\cdot\epsilon}.
\end{align*}
and
\begin{align*}
\tr_{\nv\cdot\epsilon} \in L\big(H(\div;\Omega), H^{-1/2}(\partial\Omega)\big)
\quad\text{and}\quad \tr_{\nv\cdot\epsilon}=\tr_{\nv\cdot}.
\end{align*}
\end{lemma}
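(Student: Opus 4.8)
The plan is to exploit the assumption that $\epsilon|_{\Omega\setminus\hat\Omega}=\I_{3\times3}$, so that outside $\hat\Omega$ the operators $\div$ and $\diveps$ literally coincide, and the only work is to control the normal trace, which is a local phenomenon near $\partial\Omega$. Recall that $\partial\Omega\subset\Omega\setminus\ol{\hat\Omega}$ by Assumption~\ref{ass:eps}, and $\chi$ is a smooth cutoff with $\chi=1$ on $\Omega\setminus\check\Omega$ and $\chi=0$ on $\hat\Omega$; in particular $\chi\equiv1$ in a neighborhood of $\partial\Omega$ and $\epsilon\equiv\I$ on $\supp\chi$.

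First I would treat the first assertion. Let $u\in H(\diveps;\Omega)$, i.e.\ $\epsilon u\in\boldL^2(\Omega)$ (automatic) and $\div(\epsilon u)\in L^2(\Omega)$. Set $v:=\chi u$. On $\supp\chi$ we have $\epsilon=\I$, hence $\div v=\chi\div(\epsilon u)+\nabla\chi\cdot(\epsilon u)=\chi\diveps u+\nabla\chi\cdot u\in L^2(\Omega)$, using $\nabla\chi\in\boldL^\infty(\Omega)$. Thus $v\in H(\div;\Omega)$, and since $\nabla\chi$ vanishes near $\partial\Omega$ one even gets the norm bound $\|v\|_{H(\div;\Omega)}\leq C(\|u\|_{\boldL^2(\Omega)}+\|\diveps u\|_{L^2(\Omega)})$. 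By the classical distributional definition of the normal trace on $H(\div;\Omega)$ one has $\tr_{\nv\cdot}v\in H^{-1/2}(\partial\Omega)$ with $\|\tr_{\nv\cdot}v\|_{H^{-1/2}(\partial\Omega)}\leq C\|v\|_{H(\div;\Omega)}$. Since $\chi\equiv1$ near $\partial\Omega$, one has $\tr v=\tr u$ and $\tr(\epsilon v)=\tr(\epsilon u)$ in the trace sense, so $\tr_{\nv\cdot}u$ is well defined and equals $\tr_{\nv\cdot}v$; chaining the estimates gives $\tr_{\nv\cdot}\in L(H(\diveps;\Omega),H^{-1/2}(\partial\Omega))$. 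The identity $\tr_{\nv\cdot}u=\tr_{\nv\cdot\epsilon}u$ then follows from the integration-by-parts characterizations of the two traces: for any $\phi\in H^1(\Omega)$,
\begin{align*}
\langle\tr_{\nv\cdot\epsilon}u,\tr\phi\rangle&=\langle\epsilon u,\nabla\phi\rangle_{\boldL^2(\Omega)}+\langle\diveps u,\phi\rangle_{L^2(\Omega)},\\
\langle\tr_{\nv\cdot}v,\tr\phi\rangle&=\langle v,\nabla\phi\rangle_{\boldL^2(\Omega)}+\langle\div v,\phi\rangle_{L^2(\Omega)},
\end{align*}
and one checks that replacing $\phi$ by a test function supported near $\partial\Omega$ (where $\chi=1$, $\epsilon=\I$, $v=u$) the two right-hand sides agree; since traces of such $\phi$ are dense in $H^{1/2}(\partial\Omega)$, the two functionals coincide.

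The second assertion is entirely symmetric: for $u\in H(\div;\Omega)$ set $v:=\chi u$, note $\diveps v=\div v$ on $\supp\chi$ since $\epsilon=\I$ there, so $v\in H(\diveps;\Omega)$ with the corresponding norm bound, and $\tr_{\nv\cdot\epsilon}u:=\tr_{\nv\cdot\epsilon}v$ is well defined, bounded, and equals $\tr_{\nv\cdot}u$ by the same integration-by-parts argument. The only mildly delicate point — the ``main obstacle'', such as it is — is the bookkeeping that the traces $\tr v,\tr(\epsilon v)$ seen by $\partial\Omega$ depend only on $u$ near $\partial\Omega$ and hence agree with the corresponding traces of $u$; this is where the hypotheses $\chi\equiv1$ near $\partial\Omega$ and $\epsilon\equiv\I$ on $\Omega\setminus\hat\Omega$ are used crucially, and it is what makes the normal trace of the a priori larger space $H(\diveps;\Omega)$ (resp.\ $H(\div;\Omega)$) land in $H^{-1/2}(\partial\Omega)$ rather than a weaker space.
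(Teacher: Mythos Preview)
Your proof is correct and follows essentially the same approach as the paper: both localize via the cutoff $\chi$ to reduce to the region where $\epsilon=\I$, so that $\chi u=\chi\epsilon u$ lies in $H(\div;\Omega)$ whenever $u\in H(\diveps;\Omega)$ (and vice versa), and the traces agree because $\chi\equiv1$ near $\partial\Omega$. Your write-up is considerably more detailed than the paper's---in particular you spell out the product-rule computation for $\div(\chi u)$ and the integration-by-parts identification of the two trace functionals---but the underlying idea is identical.
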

\begin{proof}
If $u\in H(\diveps;\Omega)$ then $\chi u\in H(\div;\Omega)$. Since $\chi|_{\Omega\setminus\check\Omega}=
\epsilon|_{\Omega\setminus\check\Omega}=1$ it follows $\tr_{\nv\times} u=\tr_{\nv\times} \chi u
=\tr_{\nv\times} \epsilon \chi u$. The reverse direction follows the same way.
\end{proof}

\begin{lemma}\label{lem:trLtwo}
Let $\epsilon$ suffice Assumption~\ref{ass:eps}. Thence
\begin{subequations}\label{eq:CostabelTraceEps}
\begin{align}
\label{eq:CostabelTraceEpsA}
\tr_{\nv\cdot\epsilon}\in  L\big(H(\curl,\diveps,\tr_{\nv\times};\Omega), L^2(\partial\Omega)\big),\\
\label{eq:CostabelTraceEpsB}
\tr_{\nv\times}\in L\big(H(\curl,\diveps,\tr_{\nv\cdot\epsilon};\Omega), \boldL^2_t(\partial\Omega)\big),
\end{align}
\end{subequations}
\end{lemma}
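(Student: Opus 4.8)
The plan is to transfer Costabel's trace result \eqref{eq:CostabelTrace} to the $\epsilon$-weighted spaces by localising with the cut-off function $\chi$. Two elementary facts drive the argument. First, $\chi\equiv1$ on the neighbourhood $\Omega\setminus\check\Omega$ of $\partial\Omega$, so that every boundary trace of $\chi u$ agrees with the corresponding trace of $u$ by locality of traces near $\partial\Omega$. Second, since $\chi|_{\hat\Omega}=0$ and $\epsilon|_{\Omega\setminus\hat\Omega}=\I_{3\times3}$, one has the global identity $\chi u=\chi\epsilon u$ on $\Omega$ — exactly the identity already exploited in the proof of Lemma~\ref{lem:ntrequal} — and this is what lets $\diveps u$ (rather than $\div u$, which is not available) control $\div(\chi u)$.

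For \eqref{eq:CostabelTraceEpsA} I would take $u\in H(\curl,\diveps,\tr_{\nv\times};\Omega)$ and first verify that $\chi u\in H(\curl,\div,\tr_{\nv\times};\Omega)$ with an estimate $\|\chi u\|_{H(\curl,\div,\tr_{\nv\times};\Omega)}\le C\|u\|_{H(\curl,\diveps,\tr_{\nv\times};\Omega)}$. The three summands are handled by the product rule: $\curl(\chi u)=\chi\curl u+\nabla\chi\times u\in\boldL^2(\Omega)$; $\div(\chi u)=\div(\chi\epsilon u)=\chi\diveps u+\nabla\chi\cdot(\epsilon u)\in L^2(\Omega)$, using $\epsilon\in\big(L^\infty(\Omega)\big)^{3\times3}$ and the boundedness of $\nabla\chi$; and $\tr_{\nv\times}(\chi u)=\tr_{\nv\times}u\in\boldL^2_t(\partial\Omega)$ since $\chi\equiv1$ near $\partial\Omega$. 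Applying the first identity of \eqref{eq:CostabelTrace} to $\chi u$ then gives $\tr_{\nv\cdot}(\chi u)\in L^2(\partial\Omega)$ with norm bounded by a constant times $\|\chi u\|_{H(\curl,\div,\tr_{\nv\times};\Omega)}$. Finally, using $\chi\equiv1$ near $\partial\Omega$ together with Lemma~\ref{lem:ntrequal}, $\tr_{\nv\cdot\epsilon}u=\tr_{\nv\cdot}u=\tr_{\nv\cdot}(\chi u)$; chaining the two estimates yields \eqref{eq:CostabelTraceEpsA}.

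The argument for \eqref{eq:CostabelTraceEpsB} is the mirror image: for $u\in H(\curl,\diveps,\tr_{\nv\cdot\epsilon};\Omega)$ the same computation shows $\chi u\in H(\curl,\div,\tr_{\nv\cdot};\Omega)$, where the boundary summand is now controlled via $\tr_{\nv\cdot}(\chi u)=\tr_{\nv\cdot}u=\tr_{\nv\cdot\epsilon}u\in L^2(\partial\Omega)$, again by Lemma~\ref{lem:ntrequal}; the second identity of \eqref{eq:CostabelTrace} then yields $\tr_{\nv\times}(\chi u)\in\boldL^2_t(\partial\Omega)$ with the corresponding bound, and $\tr_{\nv\times}u=\tr_{\nv\times}(\chi u)$. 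I do not expect a genuine obstacle here; the only points that need care are the global identity $\chi u=\chi\epsilon u$ — without it one is stuck with $\div(\chi u)$ and no control — and the bookkeeping that the constants from the product rule and from \eqref{eq:CostabelTrace} combine into a single bounded linear estimate, which is routine since $u\mapsto\chi u$, $u\mapsto\nabla\chi\times u$ and $u\mapsto\nabla\chi\cdot(\epsilon u)$ are all bounded.
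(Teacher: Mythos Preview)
Your proposal is correct and follows exactly the approach of the paper, which simply states ``Apply \eqref{eq:CostabelTrace} to $\chi u$ and employ Lemma~\ref{lem:ntrequal}.'' You have merely unpacked the details behind this one-line proof: the product-rule verifications that $\chi u$ lands in the unweighted Costabel space, and the identification of traces via $\chi\equiv1$ near $\partial\Omega$ together with Lemma~\ref{lem:ntrequal}.
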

\begin{proof}
Apply \eqref{eq:CostabelTrace} to $\chi u$ and employ Lemma~\ref{lem:ntrequal}.
\end{proof}

We deduce the next lemma from Amrouche, Bernardi, Dauge and Girault~\cite{AmroucheBernardiDaugeGirault:98}.
\begin{lemma}\label{lem:Vtraceregularity}
Let $\epsilon$ suffice Assumption~\ref{ass:eps} and $\Omega$ suffice Assumption~\ref{ass:Domain}. Thence
\begin{align}
\tr_{\nv\times} \in L\big( H(\curl,\diveps,\tr_{\nv\cdot\epsilon}^0;\Omega), \boldH^\delta_t(\partial\Omega) \big).
\end{align}
In particular $\tr_{\nv\times} \in L\big( H(\curl,\diveps,\tr_{\nv\cdot\epsilon}^0;\Omega),
\boldL^2_t(\partial\Omega) \big)$ is compact.
\end{lemma}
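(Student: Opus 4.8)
The plan is to cut $u$ off away from the support of $\epsilon-\I_{3\times3}$, so as to land in a space on which $\epsilon$ plays no role, and then to upgrade the $\boldH^{1/2}$ interior regularity recalled in~\eqref{eq:CostabelDomain} to $\boldH^{1/2+\delta}$ regularity by feeding the shift estimate of Assumption~\ref{ass:Domain} into the analysis of~\cite{AmroucheBernardiDaugeGirault:98}; the tangential trace is then read off by the ordinary trace theorem and Rellich's theorem gives the compactness.

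Concretely, for $u\in H(\curl,\diveps,\tr_{\nv\cdot\epsilon}^0;\Omega)$ I would first pass to $v:=\chi u$. Since $\chi\equiv1$ on a neighbourhood of $\partial\Omega$ and $\chi\equiv0$ on $\hat\Omega$, while $\nabla\chi$ is supported where $\epsilon=\I_{3\times3}$, the Leibniz rule gives $\curl v=\chi\,\curl u+\nabla\chi\times u\in\boldL^2(\Omega)$ and $\div v=\chi\,\diveps u+\nabla\chi\cdot u\in L^2(\Omega)$, with
\[
\|v\|_{\boldL^2(\Omega)}+\|\curl v\|_{\boldL^2(\Omega)}+\|\div v\|_{L^2(\Omega)}\le C\,\|u\|_{H(\curl,\diveps,\tr_{\nv\cdot\epsilon}^0;\Omega)}.
\]
Because $v=u$ and $\epsilon=\I_{3\times3}$ near $\partial\Omega$, Lemma~\ref{lem:ntrequal} gives $\tr_{\nv\cdot}v=\tr_{\nv\cdot\epsilon}u=0$ and $\tr_{\nv\times}v=\tr_{\nv\times}u$. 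Thus $v\in H(\curl,\div,\tr_{\nv\cdot}^0;\Omega)$, and it suffices to bound $\tr_{\nv\times}v$ in $\boldH^\delta_t(\partial\Omega)$ by the right-hand side above.

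Next I would use the Helmholtz-type splitting $v=\nabla p+z$. As $\langle\div v,1\rangle_{L^2(\Omega)}=\langle\tr_{\nv\cdot}v,1\rangle_{L^2(\partial\Omega)}=0$, Assumption~\ref{ass:Domain} applied with datum $f=-\div v$ and $g=0$ yields $p\in H^1_*(\Omega)$ with $\div(\nabla p)=\div v$, $\tr_{\nv\cdot}\nabla p=0$ and $\|p\|_{H^{3/2+\delta}(\Omega)}\le C\|\div v\|_{L^2(\Omega)}$, hence $\nabla p\in\boldH^{1/2+\delta}(\Omega)$ with the same bound. The remainder $z:=v-\nabla p$ is divergence free, has $\tr_{\nv\cdot}z=0$ and $\curl z=\curl v\in\boldL^2(\Omega)$; for such $z$ the vector potential construction of~\cite{AmroucheBernardiDaugeGirault:98}, whose only domain-dependent ingredient is again the scalar shift property of Assumption~\ref{ass:Domain} (applied to the Cartesian components), delivers $z\in\boldH^{1/2+\delta}(\Omega)$ with $\|z\|_{\boldH^{1/2+\delta}(\Omega)}\le C\big(\|z\|_{\boldL^2(\Omega)}+\|\curl z\|_{\boldL^2(\Omega)}\big)$, the lower-order terms being absorbed via~\eqref{eq:CostabelDomain}. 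Summing, $v\in\boldH^{1/2+\delta}(\Omega)$ with $\|v\|_{\boldH^{1/2+\delta}(\Omega)}\le C\|u\|_{H(\curl,\diveps,\tr_{\nv\cdot\epsilon}^0;\Omega)}$. Now continuity of the trace gives $\tr v\in\boldH^\delta(\partial\Omega)$; on $v$ this trace is compatible with the $H(\curl;\Omega)$ tangential trace, so $\tr_{\nv\times}v=\nv\times\tr v$, and multiplication by the components of $\nv$ maps $\boldH^\delta(\partial\Omega)$ into itself for the domains in the remark following Assumption~\ref{ass:Domain} (in any case one may shrink $\delta$ below $1/2$). Hence $\tr_{\nv\times}u=\tr_{\nv\times}v\in\boldH^\delta_t(\partial\Omega)$ with the asserted bound, and the compactness statement follows from the Rellich embedding $\boldH^\delta_t(\partial\Omega)\hookrightarrow\boldL^2_t(\partial\Omega)$, which is compact since $\partial\Omega$ is compact and $\delta>0$.

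I expect the genuine work to sit in the $\boldH^{1/2+\delta}$ estimate for $v$, and there in the divergence-free part $z$: the embedding theorems of~\cite{AmroucheBernardiDaugeGirault:98} are phrased for $C^{1,1}$ or convex domains (with exponent $1$) and for polyhedra, so one must extract from their proof the fractional estimate with exponent $1/2+\delta$ that is actually driven by Assumption~\ref{ass:Domain} and check that it remains valid on the class of Lipschitz domains admitted here.
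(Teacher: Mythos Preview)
Your proposal is correct and follows essentially the same route as the paper's proof: both pass to the cut-off $v=\chi u$, then invoke the argument behind \cite[Proposition~3.7]{AmroucheBernardiDaugeGirault:98} together with the shift estimate of Assumption~\ref{ass:Domain} to obtain $v\in\boldH^{1/2+\delta}(\Omega)$, and finish via the trace theorem and the compact embedding $\boldH^\delta_t(\partial\Omega)\hookrightarrow\boldL^2_t(\partial\Omega)$. The paper compresses the middle step into a one-line reference, whereas you unpack it into the Helmholtz splitting $v=\nabla p+z$ and correctly flag that the genuine work lies in adapting the divergence-free part of the ABDG argument from the $C^{1,1}$/convex setting to the fractional exponent $1/2+\delta$ driven by Assumption~\ref{ass:Domain}.
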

\begin{proof}
Apply the proof of~\cite[Proposition~3.7]{AmroucheBernardiDaugeGirault:98} to $\chi u$ and employ
Assumption~\ref{ass:Domain} to obtain $\chi u\in\boldH^{1/2+\delta}(\Omega)$. Employ
$\tr\in L\big(\boldH^{1/2+\delta}(\Omega), \boldH^{\delta}(\partial\Omega)\big)$ and the compact embedding
$\boldH^\delta_t(\partial\Omega)\to \boldL^2_t(\partial\Omega)$.
\end{proof}

We recall from Weber~\cite{Weber:80}:
\begin{align}\label{eq:Weber}
\begin{aligned}
&\text{The embeddings from }H(\curl,\diveps,\tr_{\nv\cdot\epsilon}^0;\Omega)\text{ and}\\
&H(\curl,\diveps,\tr_{\nv\times}^0;\Omega)\text{ to }\boldL^2(\Omega)\text{ are compact},
\end{aligned}
\end{align}
if $\epsilon$ suffices Assumption~\ref{ass:eps}. We mention that Weber~\cite{Weber:80} presumes $\Omega$ to have the
cone property, which is however equivalent to the Lipschitz property~\cite[Theorem~1.2.2.2]{Grisvard:85}.
\begin{lemma}\label{lem:compactEmbedding}
Let $\epsilon$ suffice Assumption~\ref{ass:eps}. Thence the embedding
\begin{align*}
H(\curl,\diveps,\tr_{\nv\times};\Omega) \to \boldL^2(\Omega)
\end{align*}
is compact.
\end{lemma}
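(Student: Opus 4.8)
The plan is to localise with the cut-off $\chi$ and to write $u=\chi u+(1-\chi)u$ for $u\in H(\curl,\diveps,\tr_{\nv\times};\Omega)$. The splitting is chosen so that $\chi$, $\nabla\chi$ and $1-\chi$ are supported where $\epsilon$ is conveniently placed: on $\supp\chi\subset\Omega\setminus\hat\Omega$ one has $\epsilon=\I_{3\times3}$, so the boundary piece $\chi u$ is governed by the \emph{standard} divergence and Costabel's regularity result~\eqref{eq:CostabelDomain} becomes available, while $(1-\chi)u$ is supported in $\check\Omega\subset\subset\Omega$ and has vanishing tangential trace, so Weber's compactness result~\eqref{eq:Weber} applies to it.

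For the boundary piece I would check that $u\mapsto\chi u$ is bounded from $H(\curl,\diveps,\tr_{\nv\times};\Omega)$ into $H(\curl,\div,\tr_{\nv\times};\Omega)$. Indeed $\curl(\chi u)=\chi\curl u+\nabla\chi\times u\in\boldL^2(\Omega)$, and $\tr_{\nv\times}(\chi u)=\tr_{\nv\times}u\in\boldL^2_t(\partial\Omega)$ because $\chi\equiv1$ near $\partial\Omega$; for the divergence a distributional product rule (see the last paragraph) gives $\div(\chi u)=\chi\,\diveps u+\nabla\chi\cdot u\in L^2(\Omega)$. Then~\eqref{eq:CostabelDomain} shows that $u\mapsto\chi u$ is bounded from $H(\curl,\diveps,\tr_{\nv\times};\Omega)$ into $\boldH^{1/2}(\Omega)$, and since the Sobolev embedding $\boldH^{1/2}(\Omega)\to\boldL^2(\Omega)$ is compact, the map $u\mapsto\chi u\in\boldL^2(\Omega)$ is compact.

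For the interior piece I would check analogously that $u\mapsto(1-\chi)u$ is bounded from $H(\curl,\diveps,\tr_{\nv\times};\Omega)$ into $H(\curl,\diveps,\tr_{\nv\times}^0;\Omega)$: one has $\curl((1-\chi)u)=(1-\chi)\curl u-\nabla\chi\times u\in\boldL^2(\Omega)$, the distributional product rule gives $\diveps((1-\chi)u)=(1-\chi)\diveps u-\nabla\chi\cdot u\in L^2(\Omega)$, and $\tr_{\nv\times}((1-\chi)u)=0$ since $1-\chi$ vanishes near $\partial\Omega$. By~\eqref{eq:Weber} the embedding $H(\curl,\diveps,\tr_{\nv\times}^0;\Omega)\to\boldL^2(\Omega)$ is compact, so $u\mapsto(1-\chi)u\in\boldL^2(\Omega)$ is compact as well. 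Since the embedding in question equals the sum of these two compact maps, it is compact, which is the claim.

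The step that needs care is the distributional product rule, because only $\diveps u\in L^2(\Omega)$ — and not $\div u\in L^2(\Omega)$ — is at our disposal. For $\phi\in C^\infty_0(\Omega)$ one writes $\chi\nabla\phi=\nabla(\chi\phi)-\phi\nabla\chi$, notes that $\chi\phi\in C^\infty_0(\Omega)$ and that $\nabla(\chi\phi)$ and $\nabla\chi$ are both supported in $\Omega\setminus\hat\Omega$, where $\epsilon=\I_{3\times3}$, whence $\int_\Omega u\cdot\nabla(\chi\phi)=\int_\Omega\epsilon u\cdot\nabla(\chi\phi)=-\int_\Omega(\diveps u)\,\chi\phi$; this yields $\div(\chi u)=\chi\,\diveps u+\nabla\chi\cdot u$ in the sense of distributions, hence in $L^2(\Omega)$, and the analogous identity for $\diveps((1-\chi)u)$ follows by the same manipulation. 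Once this is in place the remainder is bookkeeping; I expect this (namely recognising which of the two quoted results to feed each localised piece, and keeping track of the supports of $\chi$, $\nabla\chi$ and $1-\chi$ relative to $\hat\Omega$, $\check\Omega$ and $\partial\Omega$), rather than any delicate estimate, to be the main substance of the argument.
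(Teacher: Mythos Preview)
Your proof is correct and follows exactly the same route as the paper: split $u=\chi u+(1-\chi)u$, feed $\chi u$ into Costabel's embedding~\eqref{eq:CostabelDomain} (using that $\epsilon=\I_{3\times3}$ on $\supp\chi$) and $(1-\chi)u$ into Weber's result~\eqref{eq:Weber} (using that the tangential trace vanishes). The paper compresses all of this into three lines and does not spell out the distributional product-rule verification that you worked through; your extra care there is justified and correct.
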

\begin{proof}
Let $E: H(\curl,\diveps,\tr_{\nv\cdot\epsilon};\Omega)
\to \boldL^2(\Omega)\colon u\mapsto u$. Let $M(\alpha)$ be the multiplication operator
with symbol $\alpha$.
We split the identity operator in two parts $I=M(\chi)+M(1-\chi)$. Thence
$EM(\chi)$ is compact due to \eqref{eq:CostabelDomain} and $EM(1-\chi)$ is
compact due to \eqref{eq:Weber}. Hence $E=EM(\chi)+EM(1-\chi)$ is compact too.
\end{proof}

\subsection{Helmholtz decomposition on the boundary}
We recall from Buffa, Costabel and Sheen~\cite[Theorem 5.5]{BuffaCostabelSheen:02}:
\begin{align}
\boldL^2_t(\partial\Omega)=\nabla_\partial H^1(\partial\Omega) \oplus^\bot
\curl_\partial H^1(\partial\Omega).
\end{align}
and denote the respective orthogonal projections by
\begin{align}
P_{\nabla_\partial}\colon \boldL^2_t(\partial\Omega) \to \nabla_\partial H^1(\partial\Omega),\qquad
P_{\nabla_\partial^\top}\colon \boldL^2_t(\partial\Omega) \to \curl_\partial H^1(\partial\Omega).
\end{align}
Recall $\div_\partial \tr_{\nv\times} \in L\big(H(\curl;\Omega), H^{-1/2}(\partial\Omega)\big)$.
So for $u\in H(\curl;\Omega)$ let $z$ be the solution to find $z\in H^1_*(\partial\Omega)$ so that
\begin{align}
\spl \nabla_\partial z, \nabla_\partial z' \spr_{\boldL_t^2(\partial\Omega)}
= -\spl \div_\partial \tr_{\nv\times} u, z' \spr_{H^{-1}(\partial\Omega)\times H^1(\partial\Omega)}
\end{align}
for all $z'\in H^1_*(\partial\Omega)$ and set
\begin{align}\label{eq:DefS}
Su:=\nabla_\partial z.
\end{align}
From the construction of $S$ it follows $S\in L\big(H(\curl;\Omega),\boldL_t^2(\partial\Omega)\big)$ and further
\begin{align}
Su=P_{\nabla_\partial}\tr_{\nv\times} u
\end{align}
for $u\in H(\curl,\tr_{\nv\times};\Omega)$.

\section{The electromagnetic Stekloff eigenvalue problem}\label{sec:evp}
Let $\omega>0$ be fixed. For $\lambda\in\setC$ let $A(\lambda)\in L\big(H(\curl,\tr_{\nv\times};\Omega)\big)$ be defined
through
\begin{align}
\begin{aligned}
\spl A(\lambda)u,u'&\spr_{H(\curl,\tr_{\nv\times};\Omega)}:=
\spl \mu^{-1} \curl u,\curl u'\spr_{\boldL^2(\Omega)}
-\omega^2\spl\epsilon u,u'\spr_{\boldL^2(\Omega)}\\
&-\lambda \spl \tr_{\nv\times} u,\tr_{\nv\times} u'\spr_{\boldL^2_t(\partial\Omega)}
\quad\text{for all }u,u'\in H(\curl,\tr_{\nv\times};\Omega).
\end{aligned}
\end{align}
The electromagnetic Stekloff eigenvalue problem which we investigate in this note is to
\begin{align}\label{eq:EVP}
\text{find}\quad (\lambda,u)\in \setC\times H(\curl,\tr_{\nv\times};\Omega)
\setminus\{0\}\quad\text{so that}\quad A(\lambda)u=0.
\end{align}
We note that the sign of $\lambda$ herein is reversed compared to~\cite{CamanoLacknerMonk:17}. Let
\begin{align}
\spl u,u' \spr_{\tilde X}:=\spl \mu^{-1}\curl u,\curl u' \spr_{\boldL^2(\Omega)}
+\spl \epsilon u,u' \spr_{\boldL^2(\Omega)}
+\spl \tr_{\nv\times} u, \tr_{\nv\times} u' \spr_{\boldL_t^2(\partial\Omega)}
\end{align}
for all $u,u'\in H(\curl,\tr_{\nv\times};\Omega)$. It is straight forward to see that the norms induced by
$\spl \cdot,\cdot \spr_{\tilde X}$ and $\spl \cdot,\cdot \spr_{H(\curl,\tr_{\nv\times};\Omega)}$ are equivalent.
To analyze the operator $A(\lambda)$ we introduce the following subspaces of $H(\curl,\tr_{\nv\times};\Omega)$:
\begin{subequations}
\begin{align}
V&:=H(\curl,\diveps^0,\tr_{\nv\times},\tr_{\nv\cdot\epsilon}^0;\Omega),\\
W_1&:=H(\curl^0,\diveps^0,\tr_{\nv\times};\Omega) \cap W_2^{\bot_{\tilde X}},\\
W_2&:=H(\curl^0,\tr_{\nv\times}^0;\Omega).
\end{align}
\end{subequations}
We recall \cite[Theorem~4.3 and Remark~4.4]{Monk:03}:
\begin{align}
\begin{aligned}
K_N(\Omega):=\{&\nabla u\colon u\in H^1(\Omega), \diveps u=0\text{ in }\Omega,\\
&\tr u\text{ is constant on each of the connected parts of }\partial\Omega\}
\end{aligned}
\end{align}
and $\dim K_N(\Omega)=\text{number of connected parts of }\partial\Omega-1<\infty$. It holds
\begin{align}\label{eq:W2char}
W_2=\nabla H^1_0(\Omega)\oplus^{\bot_{\tilde X}} K_N(\Omega).
\end{align}
Thus
\begin{align}
\begin{aligned}\label{eq:W1char}
W_1=\{&\nabla u\colon u\in H^1(\Omega), \quad \diveps u=0 \text{ in }\Omega,
\quad \tr_{\nv\cdot\epsilon}\nabla u\in L^2(\partial\Omega),
\\&\spl \tr_{\nv\cdot\epsilon}\nabla u, 1\spr_{L^2(\Gamma)}=0\quad \text{for each }\Gamma
\text{ of the connected parts of }\partial\Omega\}.
\end{aligned}
\end{align}
We continue with a decomposition of $H(\curl,\tr_{\nv\times};\Omega)$, which is similar but different to
\cite[Theorem~3.1]{Halla:19StekloffAppr}.

\begin{theorem}\label{thm:VW}
Let $\epsilon$ suffice Assumption~\ref{ass:eps} and $\mu$ suffice Assumption~\ref{ass:mu}. Thence
\begin{align}
H(\curl,\tr_{\nv\times};\Omega)=(V\oplus W_1)\oplus^{\bot_{\tilde X}} W_2
\end{align}
in the following sense. There exist projections $P_V, P_{W_1}, P_{W_2} \in L\big(H(\curl,\tr_{\nv\times};\Omega)\big)$
with $\ran P_V=V, \ran P_{W_1}=W_1, \ran P_{W_2}=W_2$,
$W_1, W_2 \subset \ker P_V$, $V, W_2 \subset \ker P_{W_1}$, $V, W_1 \subset \ker P_{W_2}$
and $u=P_vu+P_{W_1}u+P_{W_2}u$ for each $u\in H(\curl,\tr_{\nv\times};\Omega)$.
Thus, the norm induced by
\begin{align}\label{eq:scptildeX}
\begin{aligned}
\spl u,u'\spr_X&:=\spl P_Vu,P_Vu'\spr_{\tilde X}
+\spl P_{W_1}u,P_{W_1}u'\spr_{\tilde X}+\spl P_{W_2}u,P_{W_2}u'\spr_{\tilde X},
\end{aligned}
\end{align}
$u,u'\in H(\curl,\tr_{\nv\times};\Omega)$, is equivalent to $\|\cdot\|_{H(\curl,\tr_{\nv\times};\Omega)}$.
\end{theorem}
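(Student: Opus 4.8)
The plan is to peel off $W_2$ as an $\tilde X$-orthogonal summand and then split the orthogonal complement $W_2^{\bot_{\tilde X}}$ into $V$ and $W_1$ by solving a scalar elliptic Neumann problem. Since $W_2=H(\curl^0,\tr_{\nv\times}^0;\Omega)$ is the common kernel of the bounded maps $\curl$ and $\tr_{\nv\times}$, it is closed, so the $\tilde X$-orthogonal projection $P_{W_2}$ onto $W_2$ exists and is bounded. First I would identify $W_2^{\bot_{\tilde X}}$ explicitly: for $w\in W_2$ one has $\curl w=0=\tr_{\nv\times}w$, hence $\spl u,w\spr_{\tilde X}=\spl\epsilon u,w\spr_{\boldL^2(\Omega)}$; inserting the two types of generators of $W_2$ from~\eqref{eq:W2char} and integrating by parts (recall that $\tr_{\nv\cdot\epsilon}u\in L^2(\partial\Omega)$ for $u\in H(\curl,\tr_{\nv\times};\Omega)$ with $\diveps u=0$, by Lemma~\ref{lem:trLtwo}) yields
\[
W_2^{\bot_{\tilde X}}=\bigl\{u\in H(\curl,\diveps^0,\tr_{\nv\times};\Omega)\colon \spl\tr_{\nv\cdot\epsilon}u,1\spr_{L^2(\Gamma)}=0\text{ for every connected component }\Gamma\text{ of }\partial\Omega\bigr\}.
\]
Both $W_1$ and $V$ lie in $W_2^{\bot_{\tilde X}}$ (for $W_1$ this is its definition; for $V$ the normal trace even vanishes), so the inclusions $W_2\subset\ker P_V$, $W_2\subset\ker P_{W_1}$ and $V,W_1\subset\ker P_{W_2}$ will be automatic.

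Next, for $u\in W_2^{\bot_{\tilde X}}$ I would put $g:=\tr_{\nv\cdot\epsilon}u\in L^2(\partial\Omega)$, note $\spl g,1\spr_{L^2(\partial\Omega)}=0$ (sum the conditions above over the components), and solve by Lax--Milgram in $H^1_*(\Omega)$ the variational problem $\spl\epsilon\nabla q,\nabla q'\spr_{\boldL^2(\Omega)}=\spl g,\tr q'\spr_{L^2(\partial\Omega)}$ for all $q'\in H^1_*(\Omega)$; coercivity comes from the Poincaré inequality and Assumption~\ref{ass:eps}. Testing against $C_0^\infty(\Omega)$ and then identifying the normal trace in $H^{-1/2}(\partial\Omega)$ gives $\diveps\nabla q=0$ and $\tr_{\nv\cdot\epsilon}\nabla q=g$; since $g\in L^2(\partial\Omega)$, Lemma~\ref{lem:trLtwo} upgrades this to $\tr_{\nv\times}\nabla q\in\boldL^2_t(\partial\Omega)$, so that $\nabla q\in H(\curl^0,\diveps^0,\tr_{\nv\times};\Omega)$, and because $\tr_{\nv\cdot\epsilon}\nabla q=g$ inherits the per-component conditions, $w_1:=\nabla q\in W_1$ in the sense of~\eqref{eq:W1char}. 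Then $v:=u-w_1$ satisfies $\curl v=\curl u$, $\diveps v=0$, $\tr_{\nv\times}v\in\boldL^2_t(\partial\Omega)$ and $\tr_{\nv\cdot\epsilon}v=g-g=0$, hence $v\in V$; thus $W_2^{\bot_{\tilde X}}=V+W_1$. The Lax--Milgram a priori bound $\|q\|_{H^1(\Omega)}\le C\|g\|_{L^2(\partial\Omega)}$ combined with Lemma~\ref{lem:trLtwo} makes the maps $u\mapsto w_1$ and $u\mapsto v$ bounded on $W_2^{\bot_{\tilde X}}$.

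To see that the sum is direct I would take $u\in V\cap W_1$, write $u=\nabla q$ with $\diveps\nabla q=0$ and $\tr_{\nv\cdot\epsilon}\nabla q=0$, integrate by parts against $q$ to obtain $\spl\epsilon\nabla q,\nabla q\spr_{\boldL^2(\Omega)}=0$, and conclude $u=0$ via Assumption~\ref{ass:eps}. Hence $u\mapsto w_1$ is a bounded projection $P_{W_1}^0$ of $W_2^{\bot_{\tilde X}}$ onto $W_1$ with complementary bounded projection $P_V^0:=\id-P_{W_1}^0$ onto $V$. Setting $P_V:=P_V^0(\id-P_{W_2})$ and $P_{W_1}:=P_{W_1}^0(\id-P_{W_2})$ --- legitimate because $\ran(\id-P_{W_2})=W_2^{\bot_{\tilde X}}$ --- all three projections are bounded, $P_V+P_{W_1}+P_{W_2}=(P_V^0+P_{W_1}^0)(\id-P_{W_2})+P_{W_2}=\id$, their ranges are $V$, $W_1$, $W_2$, and the remaining kernel inclusions $W_1\subset\ker P_V$, $V\subset\ker P_{W_1}$ come from $P_V^0|_{W_1}=0$ and $P_{W_1}^0|_V=0$. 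The norm equivalence then follows from $u=P_Vu+P_{W_1}u+P_{W_2}u$: the triangle inequality gives $\|u\|_{\tilde X}\le\sqrt3\,\|u\|_X$, boundedness of the projections gives $\|u\|_X\le C\|u\|_{\tilde X}$, and $\|\cdot\|_{\tilde X}$ is equivalent to $\|\cdot\|_{H(\curl,\tr_{\nv\times};\Omega)}$; moreover $\spl\cdot,\cdot\spr_X$ is an inner product since $\spl u,u\spr_X=0$ forces $P_Vu=P_{W_1}u=P_{W_2}u=0$, i.e.\ $u=0$.

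The step I expect to be the crux is the construction of $w_1$: one must verify that the gradient field delivered by the Neumann problem actually lies in $H(\curl,\tr_{\nv\times};\Omega)$ --- that is, that its tangential trace is square integrable rather than only in $\boldH^{-1/2}(\div_\partial;\partial\Omega)$ --- and that the resulting splitting is bounded. This hinges entirely on the $L^2(\partial\Omega)$-regularity of the datum $\tr_{\nv\cdot\epsilon}u$ and on the adapted Costabel trace estimate of Lemma~\ref{lem:trLtwo}, with the finitely many connected components of $\partial\Omega$ (and the space $K_N(\Omega)$ appearing in~\eqref{eq:W2char} and~\eqref{eq:W1char}) requiring a little extra bookkeeping; the rest is routine Hilbert-space manipulation.
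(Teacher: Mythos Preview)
Your proposal is correct and follows essentially the same route as the paper: take $P_{W_2}$ as the $\tilde X$-orthogonal projection onto $W_2$, then on its complement solve the $\epsilon$-weighted Neumann problem with datum $\tr_{\nv\cdot\epsilon}(u-P_{W_2}u)$ to peel off the $W_1$-component as a gradient, define $P_V$ as the remainder, and deduce the norm equivalence from boundedness of the three projections. Your explicit description of $W_2^{\bot_{\tilde X}}$ and the direct verification of $V\cap W_1=\{0\}$ are a bit more detailed than the paper's treatment, but the key mechanism --- using Lemma~\ref{lem:trLtwo} to guarantee that the constructed gradient has $L^2$ tangential trace and that the splitting is bounded --- is identical.
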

\begin{proof}
\textit{1.\ Step:}\quad
Let $P_{W_2}$ be the $\tilde X$-orthogonal projection onto $W_2$.
Hence $P_{W_2}\in L\big(H(\curl,\tr_{\nv\times};\Omega)\big)$ is a projection with range $W_2$ and kernel
\begin{align*}
W_2^{\bot_{H(\curl,\tr_{\nv\times};\Omega)}}\supset V, W_1.
\end{align*}

\textit{2a.\ Step:}\quad
Let $u\in H(\curl,\tr_{\nv\times};\Omega)$. Note that due to $\diveps (u-P_{W_2}u)=0$ and Lemma~\ref{lem:trLtwo} it hold
$\tr_{\nv\cdot\epsilon}(u-P_{W_2}u) \in L^2(\partial\Omega)$ and
$\spl \tr_{\nv\cdot\epsilon}(u-P_{W_2}u), 1\spr_{L^2(\Gamma)}=0$ for each $\Gamma$ of the connected parts of
$\partial\Omega$. Let $w_*\in H^1_*(\Omega)$ be the unique solution to
\begin{align*}
-\diveps \nabla w_*=0 \quad\text{in }\Omega, \qquad \nv\cdot \epsilon\nabla w_*= \tr_{\nv\cdot\epsilon}(u-P_{W_2}u)
\quad\text{at }\partial\Omega.
\end{align*}
Let $P_{W_1}u:=\nabla w_*$. By construction of $P_{W_1}$ and due to Lemma~\ref{lem:trLtwo} it hold
$P_{W_1}\in L\big(H(\curl,\tr_{\nv\times};\Omega)\big)$ and $\ran P_{W_1}\subset W_1$.
Let $u\in W_1$. Then $P_{W_2}u=0$ and hence $P_{W_1}u=u$. Thus $P_{W_1}$ is a projection and $\ran P_{W_1}=W_1$.

\textit{2b.\ Step:}\quad
If $u\in W_2$ then $u-P_{W_2}u=0$, further $\tr_{\nv\cdot\epsilon}(u-P_{W_2}u)=0$ and thus $P_{W_1}u=0$.
Thus $W_2\subset \ker P_{W_1}$.
If $u\in V$ then $P_{W_2}u=0$, further $\tr_{\nv\cdot\epsilon}(u-P_{W_2}u)=\tr_{\nv\cdot\epsilon}u=0$ and thus
$P_{W_1}u=0$. Hence $V\subset \ker P_{W_1}$.

\textit{3.\ Step:}\quad
Let $u\in H(\curl,\tr_{\nv\times};\Omega)$ and $P_Vu:=u-P_{W_1}u-P_{W_2}u$. It follow
$P_V\in L\big(H(\curl,\tr_{\nv\times};\Omega)\big)$, $P_Vu\in V$ and $P_VP_Vu=P_Vu$.
If $u\in V$ then $P_Vu=u$ and hence $\ran P_V=V$. It follow further $W_1,W_2 \subset \ker P_V$.

\textit{4.\ Step:}\quad
By means of the triangle
inequality and a Young inequality it holds.
\begin{align*}
\|u\|_{\tilde X}^2 =\|P_Vu+P_{W_1}u+P_{W_2}u\|_{\tilde X}^2
&\leq 3\big(\|P_Vu\|_{\tilde X}^2 +\|P_{W_1}u\|_{\tilde X}^2 +\|P_{W_2}u\|_{\tilde X}^2\big)\\
&=3\|u\|_X^2.
\end{align*}
On the other hand due to the boundedness of the projections
\begin{align*}
\|u\|_X^2
&=\|P_Vu\|_{{\tilde X}}^2
+\|P_{W_1}u\|_{{\tilde X}}^2
+\|P_{W_2}u\|_{{\tilde X}}^2\\
&\leq \big(\|P_V\|_{L(\tilde X)}^2
+\|P_{W_1}\|_{L(\tilde X)}^2
+\|P_{W_2}\|_{L(\tilde X)}^2\big)
\|u\|_{{\tilde X}}^2.
\end{align*}
Thus $\|\cdot\|_X$ is equivalent to $\|\cdot\|_{\tilde X}$. Since $\|\cdot\|_{\tilde X}$ is equivalent to
$\|\cdot\|_{H(\curl,\tr_{\nv\times};\Omega)}$, $\|\cdot\|_X$ is also equivalent to
$\|\cdot\|_{H(\curl,\tr_{\nv\times};\Omega)}$.
\end{proof}

Let us look at $A(\lambda)$ in light of this substructure of ${H(\curl,\tr_{\nv\times};\Omega)}$. To this end we
consider the space
\begin{align}\label{eq:X}
X:=H(\curl,\tr_{\nv\times};\Omega), \qquad
\spl\cdot,\cdot\spr_X\quad\text{as defined in~\eqref{eq:scptildeX}}.
\end{align}
It follows that $P_V, P_{W_1}$ and $P_{W_1}$ are even orthogonal projections in $X$. Let further $A_X(\cdot), A_c,
A_\epsilon,  A_{\tr} \in L(X)$ be defined through
\begin{subequations}
\begin{align}
\label{eq:DefAX}
\spl A_X(\lambda) u,u'\spr_X&:=\spl A(\lambda)u,u'\spr_{H(\curl,\tr_{\nv\times};\Omega)}
\quad\text{for all }u,u'\in X,\lambda\in\setC\\
\spl A_c u,u'\spr_X&:=\spl \mu^{-1}\curl u,\curl u'\spr_{\boldL^2(\Omega)}
\quad\text{for all }u,u'\in X,\\
\spl A_\epsilon u,u'\spr_X&:=\spl \epsilon u,u'\spr_{\boldL^2(\Omega)}
\quad\text{for all }u,u'\in X,\\
\spl A_{\tr} u,u'\spr_X&:=\spl \tr_{\nv\times}u,\tr_{\nv\times}u'\spr_{\boldL^2_t(\partial\Omega)}
\quad\text{for all }u,u'\in X.
\end{align}
\end{subequations}
From the definitions of $V, W_1$ and $W_2$ we deduce that
\begin{align}\label{eq:AVWW}
\begin{aligned}
A_X(\lambda)&=
(P_V+P_{W_1}+P_{W_2})(A_c-\omega^2 A_\epsilon-\lambda A_{\tr})(P_V+P_{W_1}+P_{W_2})\\
&=P_VA_cP_V
-\omega^2\big(P_VA_\epsilon P_V
+P_{W_1}A_\epsilon P_{W_1}
+P_{W_2}A_\epsilon P_{W_2}\big)\\
&-\lambda (P_V+P_{W_1})A_{\tr} (P_V+P_{W_1})\\
&=P_VA_cP_V
-\omega^2(P_VA_\epsilon P_V
+P_{W_1}A_\epsilon P_{W_1}
+P_{W_2}A_\epsilon P_{W_2}\big)\\
&-\lambda \big(P_VA_{\tr} P_V
+P_{W_1}A_{\tr} P_{W_1}
+P_VA_{\tr} P_{W_1}
+P_{W_1}A_{\tr} P_V\big).
\end{aligned}
\end{align}
If we identify $X\sim V\times W_1\times W_2$ and $X \ni u\sim (v,w_1,w_2)\in V\times W_1\times W_2$, we can
identify $A_X(\lambda)$ with the block operator
\begin{align}\label{eq:Ablock}
\bpm
P_V(A_c -\omega^2 A_\epsilon -\lambda A_{\tr}) |_V &-\lambda P_V A_{\tr} |_{W_1}&\\
-\lambda P_{W_1} A_{\tr} |_V&-P_{W_1}(\omega^2 A_\epsilon+\lambda A_{\tr}) |_{W_1}&\\
&&-\omega^2 P_{W_2} A_\epsilon |_{W_2}
\epm.
\end{align}

\begin{theorem}\label{thm:SpecBasic}
Let $\epsilon$ suffice Assumption~\ref{ass:eps}, $\mu$ suffice Assumption~\ref{ass:mu} and $\Omega$ suffice
Assumption~\ref{ass:Domain}. Thence $A_X(\lambda)$ is Fredholm if and only if $\lambda\in\setC\setminus\{0\}$.

If in addition Assumption~\ref{ass:UCP} holds true, then $\sigma\big(A(\cdot)\big)\subset\setR$ and
$\sigma\big(A(\cdot)\big)\setminus\{0\}$ consists of an at most countable set of eigenvalues with finite algebraic
multiplicity which have no accumulation point in $\setR\setminus\{0\}$.
\end{theorem}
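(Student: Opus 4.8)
The plan is to read the two assertions off the block form \eqref{eq:Ablock}, after checking that modulo compact operators $A_X(\lambda)$ coincides with the block diagonal operator $\diag\big(\Id_V,\,-\lambda\,\Id_{W_1},\,-\omega^2\Id_{W_2}\big)$ on $X\cong V\times W_1\times W_2$. On $W_2$ we have $\curl u=0$ and $\tr_{\nv\times}u=0$, so $\langle u,u'\rangle_X=\langle\epsilon u,u'\rangle_{\boldL^2(\Omega)}$ and hence $P_{W_2}A_\epsilon|_{W_2}=\Id_{W_2}$; the third diagonal block thus equals $-\omega^2\Id_{W_2}$, which is boundedly invertible since $\omega>0$. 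On $V$ one computes $\langle P_VA_cv,v'\rangle_X=\langle\mu^{-1}\curl v,\curl v'\rangle_{\boldL^2(\Omega)}=\langle v,v'\rangle_X-\langle\epsilon v,v'\rangle_{\boldL^2(\Omega)}-\langle\tr_{\nv\times}v,\tr_{\nv\times}v'\rangle_{\boldL^2_t(\partial\Omega)}$; since the embedding $V\hookrightarrow\boldL^2(\Omega)$ is compact by \eqref{eq:Weber} and $\tr_{\nv\times}\colon V\to\boldL^2_t(\partial\Omega)$ is compact by Lemma~\ref{lem:Vtraceregularity}, the operators $P_VA_\epsilon|_V$, $P_VA_{\tr}|_V$ are compact and the first diagonal block $P_V(A_c-\omega^2A_\epsilon-\lambda A_{\tr})|_V$ equals $\Id_V$ up to a compact perturbation. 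On $W_1$ we have $\curl u=0$ and $\diveps u=0$, so $\langle u,u'\rangle_X=\langle\epsilon u,u'\rangle_{\boldL^2(\Omega)}+\langle\tr_{\nv\times}u,\tr_{\nv\times}u'\rangle_{\boldL^2_t(\partial\Omega)}$, that is $P_{W_1}A_\epsilon|_{W_1}+P_{W_1}A_{\tr}|_{W_1}=\Id_{W_1}$; since $W_1\subset H(\curl,\diveps,\tr_{\nv\times};\Omega)$, Lemma~\ref{lem:compactEmbedding} shows $W_1\hookrightarrow\boldL^2(\Omega)$ is compact and hence $P_{W_1}A_\epsilon|_{W_1}$ is compact, so the second diagonal block $-P_{W_1}(\omega^2A_\epsilon+\lambda A_{\tr})|_{W_1}=-\lambda\,\Id_{W_1}+(\lambda-\omega^2)P_{W_1}A_\epsilon|_{W_1}$ equals $-\lambda\,\Id_{W_1}$ up to a compact perturbation. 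All off-diagonal entries of \eqref{eq:Ablock} are compact: the $W_2$ row and column vanish, and the couplings $P_VA_{\tr}|_{W_1}$, $P_{W_1}A_{\tr}|_V$ factor through the compact map $\tr_{\nv\times}\colon V\to\boldL^2_t(\partial\Omega)$. Thus $A_X(\lambda)$ is a compact perturbation of $\diag(\Id_V,-\lambda\,\Id_{W_1},-\omega^2\Id_{W_2})$, which is Fredholm if and only if $-\lambda\,\Id_{W_1}$ is; as $W_1$ is infinite dimensional (it contains all $\epsilon$-harmonic gradients with square integrable, componentwise mean free normal trace), this happens precisely for $\lambda\neq0$. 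Since Fredholmness is preserved under compact perturbations in both directions, $A_X(\lambda)$ is Fredholm exactly for $\lambda\in\setC\setminus\{0\}$.

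Next I would establish $\sigma\big(A(\cdot)\big)\subset\setR$, noting first that $\sigma\big(A(\cdot)\big)=\sigma\big(A_X(\cdot)\big)$ because $A(\lambda)$ and $A_X(\lambda)$ are the Riesz representatives of one and the same sesquilinear form with respect to equivalent scalar products, so they have the same kernel and range. Let $A_X(\lambda)u=0$ with $u\neq0$. Testing against $u$ gives $\langle\mu^{-1}\curl u,\curl u\rangle_{\boldL^2(\Omega)}-\omega^2\langle\epsilon u,u\rangle_{\boldL^2(\Omega)}-\lambda\|\tr_{\nv\times}u\|^2_{\boldL^2_t(\partial\Omega)}=0$; since $\mu^{-1}$ and $\epsilon$ are real, symmetric and uniformly positive, the first two terms are real, whence $\lambda\|\tr_{\nv\times}u\|^2_{\boldL^2_t(\partial\Omega)}\in\setR$. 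If $\tr_{\nv\times}u\neq0$ this forces $\lambda\in\setR$. If $\tr_{\nv\times}u=0$, then $A_X(\lambda)u=0$ is the weak form of $\curl\mu^{-1}\curl u-\omega^2\epsilon u=0$ in $\Omega$ together with the natural boundary condition $\tr_{\nv\times}\mu^{-1}\curl u=0$ at $\partial\Omega$, so Assumption~\ref{ass:UCP} (applied with the already known $\tr_{\nv\times}u=0$) yields $u=0$, a contradiction. Hence $\sigma\big(A(\cdot)\big)\subset\setR$.

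Finally I would invoke the analytic Fredholm theorem for holomorphic operator functions. The family $\lambda\mapsto A_X(\lambda)$ is affine, hence holomorphic, on $\setC$ and, by the first part, Fredholm on the connected open set $\setC\setminus\{0\}$; by the reality just shown, $\sigma\big(A_X(\cdot)\big)$ is a proper subset of $\setC\setminus\{0\}$ (it omits $\setC\setminus\setR$ entirely). Consequently $\sigma\big(A_X(\cdot)\big)\cap(\setC\setminus\{0\})$ has no accumulation point in $\setC\setminus\{0\}$, consists of eigenvalues of finite algebraic multiplicity, and $A_X(\cdot)^{-1}$ is meromorphic on $\setC\setminus\{0\}$ with poles exactly at those eigenvalues. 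Intersecting with $\sigma\big(A(\cdot)\big)\subset\setR$ yields the assertion: $\sigma\big(A(\cdot)\big)\setminus\{0\}$ is an at most countable set of eigenvalues of finite algebraic multiplicity without accumulation point in $\setR\setminus\{0\}$ (accumulation at $0$ or at infinity is not excluded here). The only delicate step is the bookkeeping in the first paragraph — keeping track of which compact embedding ($V\hookrightarrow\boldL^2(\Omega)$ from \eqref{eq:Weber}, $W_1\hookrightarrow\boldL^2(\Omega)$ from Lemma~\ref{lem:compactEmbedding}, or $\tr_{\nv\times}|_V$ from Lemma~\ref{lem:Vtraceregularity}) makes each diagonal block and each off-diagonal entry compact — together with the accompanying observation that $W_1$ is infinite dimensional, which is what makes $A_X(\cdot)$ fail to be Fredholm exactly at $\lambda=0$.
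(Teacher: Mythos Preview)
Your proof is correct and self-contained, whereas the paper's own proof simply defers to Theorem~3.2, Corollary~3.3 and Corollary~3.4 of the companion article \cite{Halla:19StekloffAppr}. Your approach---reading the Fredholm property off the block structure~\eqref{eq:Ablock} via the compact embeddings of Lemmata~\ref{lem:Vtraceregularity} and~\ref{lem:compactEmbedding} and~\eqref{eq:Weber}, obtaining reality from the unique continuation Assumption~\ref{ass:UCP}, and then applying the analytic Fredholm alternative---is almost certainly what the cited reference does as well, so there is no genuinely different route here; you have simply written out the argument instead of citing it.

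One small point you leave implicit and could state in one line: for $\lambda\neq 0$ the diagonal model operator $\diag(\Id_V,-\lambda\Id_{W_1},-\omega^2\Id_{W_2})$ is \emph{bijective}, so $A_X(\lambda)$, being a compact perturbation, is Fredholm of index zero. This is what lets you pass from ``$\ker A_X(\lambda)=\{0\}$ for $\lambda\notin\setR$'' (which is what your symmetry/UCP argument actually proves) to ``$A_X(\lambda)$ is bijective for $\lambda\notin\setR$'', which is what you need both for the inclusion $\sigma\big(A_X(\cdot)\big)\subset\setR$ and for the non-emptiness of the resolvent set in the analytic Fredholm theorem. Without that index-zero remark your second paragraph only rules out eigenvalues off the real axis, not spectrum.
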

\begin{proof}
The first statement follows from Theorem~3.2 and Corollary~3.4 of \cite{Halla:19StekloffAppr}.
The second statement can be seen as in the proof of Corollary~3.3 of \cite{Halla:19StekloffAppr}.
\end{proof}
From~\eqref{eq:AVWW} or \eqref{eq:Ablock} we recognize that any eigenfunction $u\in X$ satisfies $P_{W_2}u=w_2=0$.
Hence to study the eigenvalues of $A_X(\cdot)$ it suffices to study
\begin{align}
\begin{aligned}
(P_V+P_{W_1})A_X&(\lambda)|_{V\oplus W_1} \\&\sim
\bpm
P_V(A_c -\omega^2 A_\epsilon -\lambda A_{\tr}) |_V &-\lambda P_V A_{\tr} |_{W_1}\\
-\lambda P_{W_1} A_{\tr} |_V&-P_{W_1}(\omega^2 A_\epsilon+\lambda A_{\tr}) |_{W_1}
\epm.
\end{aligned}
\end{align}

\section{Spectrum in the neighborhood of zero}\label{sec:spectrumzero}
First, we establish in Theorem~\ref{thm:SpecGapZero} the absence of eigenvalues of $A_X(\cdot)$ in $(0,c)$ for
sufficiently small $c>0$. Later on in Theorem~\ref{thm:existencelambdanegative}, we establish the existence of an
infinite sequence of negative eigenvalues of $A_X(\cdot)$ which accumulate at zero.

\subsection{Spectrum right of zero}
We will require in this section the following additional assumption.
\begin{assumption}[$\omega^2$ is no Neumann eigenvalue]\label{ass:NoNeumann}
\begin{align*}
P_{V} A_c |_{V}  -\omega^2 P_{V} A_\epsilon |_{V} \in L(V) \qquad\text{is bijective}.
\end{align*}
\end{assumption}
Due to Assumption~\ref{ass:NoNeumann} we know that $P_V (A_c -\omega^2 A_\epsilon) |_V$ is invertible.
Thus by a Neumann series argument $P_V (A_c -\omega^2 A_\epsilon-\lambda A_{\tr}) |_V \in L(V)$ is invertible too for all
\begin{align}\label{eq:lambdasmalli}
|\lambda| < \frac{1}{\|(P_V (A_c -\omega^2 A_\epsilon) |_V)^{-1} P_V A_{\tr} |_V\|_{L(V)}}
\end{align}
and thence it holds
\begin{align}
\|(P_V (A_c -\omega^2 A_\epsilon-\lambda A_{\tr}) |_V)^{-1}\|_{L(V)}
\leq  \frac{1}{1-\lambda \|(P_V (A_c -\omega^2 A_\epsilon) |_V)^{-1} P_V A_{\tr} |_V\|_{L(V)}}.
\end{align}
For $\lambda$ satisfying~\eqref{eq:lambdasmalli} we build the Schur-complement of
$(P_V+P_{W_1})A_X(\lambda)|_{V\oplus W_1}$ with respect to $P_Vu=v$:
\begin{subequations}
\begin{align}
\label{eq:AW}
A_{W_1}(\lambda)&:=-\omega^2 P_{W_1} A_\epsilon |_{W_1} -\lambda(P_{W_1} A_{\tr} |_{W_1}+H_{W_1}(\lambda)) \in L(W_1),\\
H_{W_1}(\lambda)&:= \lambda P_{W_1} A_{\tr} (P_V (A_c -\omega^2 A_\epsilon-\lambda A_{\tr}) |_V)^{-1} P_V A_{\tr}|_{W_1}
\in L(W_1).
\end{align}
\end{subequations}
It is straight forward to see, that for $\lambda$ satisfying \eqref{eq:lambdasmalli}, $\lambda$ is an eigenvalue to
$A_X(\cdot)$ if and only if $\lambda$ is an eigenvalue to $A_{W_1}(\cdot)$. Hence to study the eigenvalues of $A_X(\cdot)$
in a neighborhood of zero, it completely suffices to study the eigenvalues of $A_{W_1}(\cdot)$ in a neighborhood of zero.
For
\begin{align}\label{eq:lambdasmallii}
|\lambda| < \frac{1}{2\|(P_V (A_c -\omega^2 A_\epsilon) |_V)^{-1} P_V A_{\tr} |_V\|_{L(V)}} 
\end{align}
we deduce
\begin{align}\label{eq:lambdasmalliii}
\|H_{W_1}(\lambda)\|_{L(W_1)} \leq \lambda 2 \|P_V\|_{L(X)} \|P_{W_1}\|_{L(X)} \|A_{\tr}\|_{L(X)}^2.
\end{align}
Let
\begin{align}
B_{\tr} \in L\big(X,\boldL^2_t(\partial\Omega)\big) \colon u\mapsto \tr_{\nv\times}u
\end{align}
so that
\begin{align}\label{eq:AeqBsB}
A_{\tr}=B_{\tr}^*B_{\tr}.
\end{align}

\begin{lemma}\label{lem:PWAtrisPD}
Let Assumptions~\ref{ass:eps} hold true. Thence $P_{W_1} A_{\tr} |_{W_1}$ is strictly positive definite, i.e.\
\begin{align}
\inf_{w_1\in W_1\setminus\{0\}} \frac{\spl (P_{W_1} A_{\tr} |_{W_1})w_1,w_1 \spr_X}{\|w_1\|_X^2}>0.
\end{align}
\end{lemma}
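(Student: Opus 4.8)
The plan is to reduce the quadratic form of $P_{W_1}A_{\tr}|_{W_1}$ to a boundary seminorm and then to obtain the uniform lower bound by a weak-compactness argument. Two elementary observations set this up. Since $P_{W_1}$ is an orthogonal projection on $X$ and $A_{\tr}=B_{\tr}^*B_{\tr}$ by~\eqref{eq:AeqBsB}, for $w_1\in W_1$ one has $\spl(P_{W_1}A_{\tr}|_{W_1})w_1,w_1\spr_X=\spl A_{\tr}w_1,w_1\spr_X=\|\tr_{\nv\times}w_1\|_{\boldL^2_t(\partial\Omega)}^2$. Moreover, for $w_1\in W_1$ we have $P_Vw_1=P_{W_2}w_1=0$, $P_{W_1}w_1=w_1$, and $\curl w_1=0$ (as $W_1\subset H(\curl^0,\diveps^0,\tr_{\nv\times};\Omega)$), so the definition~\eqref{eq:scptildeX} of $\spl\cdot,\cdot\spr_X$ gives $\|w_1\|_X^2=\spl\epsilon w_1,w_1\spr_{\boldL^2(\Omega)}+\|\tr_{\nv\times}w_1\|_{\boldL^2_t(\partial\Omega)}^2$. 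By the uniform ellipticity of $\epsilon$, the claim is therefore equivalent to the bound $\|\tr_{\nv\times}w_1\|_{\boldL^2_t(\partial\Omega)}^2\ge c\,\|w_1\|_{\boldL^2(\Omega)}^2$ for all $w_1\in W_1$ and some $c>0$.

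To prove that bound I would first note injectivity: if $w_1\in W_1$ has $\tr_{\nv\times}w_1=0$, then $w_1\in H(\curl^0,\tr_{\nv\times}^0;\Omega)=W_2$, while $w_1\in W_1\subset W_2^{\bot_{\tilde X}}$ forces $\|w_1\|_{\tilde X}=0$, i.e.\ $w_1=0$. Then I would argue by contradiction: if the bound fails, choose $w_n\in W_1$ with $\|w_n\|_X=1$ and $\|\tr_{\nv\times}w_n\|_{\boldL^2_t(\partial\Omega)}\to 0$. The sequence is bounded in $W_1$, hence, up to a subsequence, $w_n\rightharpoonup w$ in $X$; since $W_1$ is a closed, hence weakly closed, subspace, $w\in W_1$. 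As $W_1$ embeds boundedly into $H(\curl,\diveps,\tr_{\nv\times};\Omega)$, Lemma~\ref{lem:compactEmbedding} yields $w_n\to w$ strongly in $\boldL^2(\Omega)$ along the subsequence. Passing to the limit in $1=\|w_n\|_X^2=\spl\epsilon w_n,w_n\spr_{\boldL^2(\Omega)}+\|\tr_{\nv\times}w_n\|_{\boldL^2_t(\partial\Omega)}^2$ gives $\spl\epsilon w,w\spr_{\boldL^2(\Omega)}=1$, so $w\neq0$; but weak continuity of $B_{\tr}\in L(X,\boldL^2_t(\partial\Omega))$ gives $\tr_{\nv\times}w_n\rightharpoonup\tr_{\nv\times}w$, which together with $\tr_{\nv\times}w_n\to0$ forces $\tr_{\nv\times}w=0$, hence $w=0$ by the injectivity step --- a contradiction.

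The one place where something must genuinely be used is the observation that the $\curl$-part of $\|\cdot\|_X$ vanishes on $W_1$, so that the compact embedding of Lemma~\ref{lem:compactEmbedding} controls the entire $X$-norm there; combined with the identification $W_2=H(\curl^0,\tr_{\nv\times}^0;\Omega)$, this is what makes the compactness--contradiction argument close. Everything else (weak lower semicontinuity, weak closedness of $W_1$, weak continuity of $B_{\tr}$) is routine.
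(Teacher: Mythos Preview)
Your proof is correct and uses the same three ingredients as the paper's proof: the identity $\spl A_{\tr}w_1,w_1\spr_X=\|\tr_{\nv\times}w_1\|^2$, the injectivity step via $W_1\cap W_2=\{0\}$, and the compact embedding of Lemma~\ref{lem:compactEmbedding} combined with $\curl|_{W_1}=0$. The only difference is packaging: the paper phrases the compactness step as ``$P_{W_1}A_{\tr}|_{W_1}$ is weakly coercive'' (since $P_{W_1}(A_{\tr}+A_\epsilon)|_{W_1}=I_{W_1}$ on $W_1$ and $P_{W_1}A_\epsilon|_{W_1}$ is compact) and then invokes that a selfadjoint, positive semi-definite, injective, weakly coercive operator is strictly positive definite, whereas you unwind this abstract statement into an explicit sequential contradiction argument.
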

\begin{proof}
$A_{\tr}$ is selfadjoint and positive semi definite due to \eqref{eq:AeqBsB} and hence so is $P_{W_1} A_{\tr} |_{W_1}$.
$P_{W_1} A_{\tr} |_{W_1}$ is weakly coercive due to Lemma~\ref{lem:compactEmbedding} and $\curl w_1=0$ for each
$w_1\in W_1$.
$P_{W_1} A_{\tr} |_{W_1}$ is injective since $w_1\in W_1\cap \ker (P_{W_1} A_{\tr} |_{W_1})$ implies $w_1\in W_2$ and
hence $w_1=0$.
Since $P_{W_1} A_{\tr} |_{W_1}$ is selfadjoint, positive semi definite and bijective, it is already strictly positive
definite.
\end{proof}

\begin{lemma}\label{lem:PWAtrHisPD}
Let Assumptions~\ref{ass:eps}, \ref{ass:mu}, \ref{ass:Domain} and~\ref{ass:NoNeumann} hold
true. Thence there exists $c_0>0$ so that $P_{W_1} A_{\tr} |_{W_1}+H_{W_1}(\lambda)$ is strictly positive definite, i.e.\
\begin{align}
\inf_{w_1\in W_1\setminus\{0\}} \frac{\spl (P_{W_1} A_{\tr} |_{W_1}+H_{W_1}(\lambda))w_1,w_1 \spr_X}{\|w_1\|_X^2}>0,
\end{align}
for each $\lambda\in (-c_0,c_0)$.
\end{lemma}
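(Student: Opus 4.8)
The plan is to combine the strict positive definiteness of $P_{W_1} A_{\tr}|_{W_1}$ from Lemma~\ref{lem:PWAtrisPD} with a smallness bound on the perturbation $H_{W_1}(\lambda)$, so that for $\lambda$ near zero the perturbation cannot destroy the definiteness. Denote by $\gamma:=\inf_{w_1\in W_1\setminus\{0\}} \spl (P_{W_1} A_{\tr}|_{W_1})w_1,w_1\spr_X / \|w_1\|_X^2 > 0$ the coercivity constant granted by Lemma~\ref{lem:PWAtrisPD}. For every $w_1\in W_1$ with $\|w_1\|_X=1$ we then estimate
\begin{align*}
\spl (P_{W_1} A_{\tr}|_{W_1}+H_{W_1}(\lambda))w_1,w_1\spr_X
\geq \gamma - \|H_{W_1}(\lambda)\|_{L(W_1)}.
\end{align*}
So it only remains to make $\|H_{W_1}(\lambda)\|_{L(W_1)}$ strictly smaller than $\gamma$ for all small $\lambda$.

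For this I would use the explicit bound already recorded in~\eqref{eq:lambdasmalliii}: provided $\lambda$ satisfies~\eqref{eq:lambdasmallii}, one has $\|H_{W_1}(\lambda)\|_{L(W_1)} \leq \lambda\, 2 \|P_V\|_{L(X)} \|P_{W_1}\|_{L(X)} \|A_{\tr}\|_{L(X)}^2 =: \lambda\, C$. Hence choosing
\begin{align*}
c_0 := \min\!\left\{\frac{\gamma}{2C},\ \frac{1}{2\|(P_V (A_c -\omega^2 A_\epsilon)|_V)^{-1} P_V A_{\tr}|_V\|_{L(V)}}\right\}
\end{align*}
guarantees, for every $\lambda\in(-c_0,c_0)$, both that~\eqref{eq:lambdasmallii} (and a fortiori~\eqref{eq:lambdasmalli}) holds, so that $H_{W_1}(\lambda)$ is well defined, and that $\|H_{W_1}(\lambda)\|_{L(W_1)} \leq |\lambda|\,C < \gamma/2$. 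Then the displayed estimate gives $\spl (P_{W_1} A_{\tr}|_{W_1}+H_{W_1}(\lambda))w_1,w_1\spr_X \geq \gamma/2 > 0$ uniformly over $\|w_1\|_X=1$, which is exactly the claimed strict positive definiteness; note the quantity is real since $A_{\tr}$ is selfadjoint and $H_{W_1}(\lambda)$ is selfadjoint for real $\lambda$ (being built from the selfadjoint operators $A_{\tr}$ and $(P_V(A_c-\omega^2 A_\epsilon-\lambda A_{\tr})|_V)^{-1}$), so only the real part needs to be controlled, and the bound above already controls the full operator norm.

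The only genuinely delicate point is bookkeeping: one must verify that $H_{W_1}(\lambda)$ is indeed selfadjoint on $W_1$ for real $\lambda$ — this uses that $P_V$ and $P_{W_1}$ are orthogonal projections on $X$ (Theorem~\ref{thm:VW} together with~\eqref{eq:X}) and that $A_c$, $A_\epsilon$, $A_{\tr}$ are selfadjoint in $\spl\cdot,\cdot\spr_X$, so that $P_V(A_c-\omega^2 A_\epsilon-\lambda A_{\tr})|_V$ is a selfadjoint invertible operator on $V$ with selfadjoint inverse, whence $P_{W_1} A_{\tr}(\cdots)^{-1} P_V A_{\tr}|_{W_1}$ is selfadjoint. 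Everything else is the Neumann-series continuity already carried out before the statement, so no further obstacle arises; in particular no new assumption beyond~\ref{ass:eps}, \ref{ass:mu}, \ref{ass:Domain}, \ref{ass:NoNeumann} is needed, the first for Lemma~\ref{lem:PWAtrisPD} and the last three for the existence and bound of $H_{W_1}(\lambda)$.
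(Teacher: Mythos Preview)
Your proof is correct and follows essentially the same route as the paper's own argument: combine the strict positive definiteness of $P_{W_1}A_{\tr}|_{W_1}$ from Lemma~\ref{lem:PWAtrisPD} with the smallness bound~\eqref{eq:lambdasmalliii} on $H_{W_1}(\lambda)$ (valid under~\eqref{eq:lambdasmallii}) and the selfadjointness of $H_{W_1}(\lambda)$ for real $\lambda$, then apply the (inverse) triangle inequality. Your write-up merely spells out in detail what the paper compresses into one sentence, including the explicit choice of $c_0$ and the verification of selfadjointness via the orthogonality of $P_V,P_{W_1}$ in $X$.
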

\begin{proof}
It is straight forward to see that $H_{W_1}(\lambda)$ selfadjoint for $\lambda\in\setR$ satisfying \eqref{eq:lambdasmalli}.
The inverse triangle inequality, Lemma~\ref{lem:PWAtrisPD} and \eqref{eq:lambdasmallii}, \eqref{eq:lambdasmalliii} yield
the claim.
\end{proof}

\begin{theorem}\label{thm:SpecGapZero}
Let Assumptions~\ref{ass:eps}, \ref{ass:mu}, \ref{ass:Domain}, \ref{ass:UCP} and~\ref{ass:NoNeumann} hold
true and $c_0$ be as in Lemma~\ref{lem:PWAtrHisPD}. Thence $\sigma\big(A_X(\cdot)\big)\cap(0,c_0)=\emptyset$.
\end{theorem}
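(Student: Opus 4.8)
The plan is to reduce, for $\lambda$ in a neighborhood of zero, the question of whether $\lambda\in\sigma\big(A_X(\cdot)\big)$ to the invertibility of the Schur-complement $A_{W_1}(\lambda)$, and then to show that for $\lambda\in(0,c_0)$ this operator is strictly negative definite, hence bijective. First I would invoke the equivalence established just above: for $\lambda$ satisfying \eqref{eq:lambdasmalli} (and $c_0$ is chosen small enough that $(-c_0,c_0)$ lies well inside this range), $\lambda$ is an eigenvalue of $A_X(\cdot)$ if and only if it is an eigenvalue of $A_{W_1}(\cdot)$; moreover $P_{W_2}u=0$ for any would-be eigenfunction, so the $W_2$-block plays no role. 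Since $A_X(\lambda)$ is Fredholm of index zero for $\lambda\neq0$ (Theorem~\ref{thm:SpecBasic}), bijectivity is equivalent to injectivity, so it suffices to rule out nontrivial solutions of $A_{W_1}(\lambda)w_1=0$.

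Next I would write out, for $\lambda\in(0,c_0)$ and $w_1\in W_1\setminus\{0\}$,
\begin{align*}
\spl A_{W_1}(\lambda)w_1,w_1\spr_X
=-\omega^2\spl P_{W_1}A_\epsilon|_{W_1}w_1,w_1\spr_X
-\lambda\spl\big(P_{W_1}A_{\tr}|_{W_1}+H_{W_1}(\lambda)\big)w_1,w_1\spr_X.
\end{align*}
The first term is $\leq0$ because $A_\epsilon$ is positive semidefinite by Assumption~\ref{ass:eps} (indeed $\spl A_\epsilon u,u\spr_X=\|\sqrt\epsilon\,u\|_{\boldL^2(\Omega)}^2\geq c_\epsilon\|u\|_{\boldL^2(\Omega)}^2$). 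For $\lambda\in(0,c_0)$ the factor $-\lambda$ is negative, and by Lemma~\ref{lem:PWAtrHisPD} the operator $P_{W_1}A_{\tr}|_{W_1}+H_{W_1}(\lambda)$ is strictly positive definite on $W_1$, so the second term is strictly negative whenever $w_1\neq0$. Hence $\spl A_{W_1}(\lambda)w_1,w_1\spr_X<0$ for all $w_1\neq0$, which forces $A_{W_1}(\lambda)w_1\neq0$; thus $A_{W_1}(\lambda)$ is injective, hence bijective, and $\lambda\notin\sigma\big(A_X(\cdot)\big)$.

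I do not expect a serious obstacle here: all the hard work has been front-loaded into Lemma~\ref{lem:PWAtrisPD} and Lemma~\ref{lem:PWAtrHisPD} (the weak coercivity coming from the compact embedding of Lemma~\ref{lem:compactEmbedding}, together with injectivity and selfadjointness to upgrade to strict positivity), and into the Schur-complement reduction together with the Neumann-series bound \eqref{eq:lambdasmalliii}. The only points that require a little care are bookkeeping ones: making sure $c_0$ is taken no larger than the radius in \eqref{eq:lambdasmallii} so that both the Schur-complement is well-defined and Lemma~\ref{lem:PWAtrHisPD} applies; checking the sign of $H_{W_1}(\lambda)$ is irrelevant since it has already been absorbed into the strictly positive operator of Lemma~\ref{lem:PWAtrHisPD}; and noting that we only need the interval $(0,c_0)$, where the sign of $\lambda$ makes the argument work, whereas for $\lambda\in(-c_0,0)$ the same sign analysis would instead produce a positive-definite operator and is precisely where the negative eigenvalues (Theorem~\ref{thm:existencelambdanegative}) can appear.
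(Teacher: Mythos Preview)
Your proposal is correct and follows essentially the same approach as the paper: reduce to the Schur complement $A_{W_1}(\lambda)$ via the block structure and the invertibility of the $V$-block (Assumption~\ref{ass:NoNeumann} plus a Neumann series), then conclude definiteness from Lemma~\ref{lem:PWAtrHisPD} together with the sign of $-\omega^2 P_{W_1}A_\epsilon|_{W_1}$. The paper's proof is the same two-line argument; you have merely spelled out the quadratic form and routed bijectivity through injectivity plus Fredholmness (Theorem~\ref{thm:SpecBasic}) rather than invoking coercivity directly. Incidentally, your sign analysis (strictly \emph{negative} definite for $\lambda\in(0,c_0)$) is the correct one; the paper's wording ``strictly positive definite'' appears to be a slip, though of course either sign yields bijectivity.
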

\begin{proof}
For $\lambda\in(0,c_0)$, we can build the Schur complement $A_{W_1}(\lambda)$ of $A_X(\lambda)$ with respect to
$P_Vu=v$ and $A_X(\lambda)$ is bijective if and only if $A_{W_1}(\lambda)$ is so. It follows from the definition
\eqref{eq:AW} of $A_{W_1}(\lambda)$ and Lemma \ref{lem:PWAtrHisPD} that $A_{W_1}(\lambda)$ is strictly positive definite for
$\lambda\in(0,c_0)$ and hence bijective.
\end{proof}

\subsection{Spectrum left of zero}
To study the eigenvalues of $A_{W_1}(\cdot)$ in $(-c_0,0)$ we introduce
\begin{align}
A_{W_1}(\tau,\lambda):=-\omega^2 P_{W_1} A_\epsilon |_{W_1} -\tau(P_{W_1} A_{\tr} |_{W_1}+H_{W_1}(\lambda)).
\end{align}
We notice that $\lambda\in(-c_0,0)$ is an eigenvalue of $A_{W_1}(\cdot)$, if and only if $\tau$ is an eigenvalue of
$A_{W_1}(\cdot,\lambda)$ and $\tau=\lambda$. We prove the existence of infinite eigenvalues of $A_{W_1}(\cdot)$ in
$(-c_0,0)$ by the fixed point technique outlined in~\cite{CakoniHaddar:09}.
\begin{lemma}\label{lem:existencetaun}
Let Assumptions~\ref{ass:eps}, \ref{ass:mu}, \ref{ass:Domain}, \ref{ass:UCP} and~\ref{ass:NoNeumann} hold true
and $c_0$ be as in Lemma~\ref{lem:PWAtrHisPD}.
Let $\lambda\in(-c_0,c_0)$. The spectrum of $A_{W_1}(\cdot,\lambda)$ consists of
$\sigma_\mathrm{ess}\big(A_{W_1}(\cdot,\lambda)\big)=\{0\}$ and an infinite sequence of negative eigenvalues
$(\tau_n(\lambda))_{n\in\setN}$ which accumulate at zero.
\end{lemma}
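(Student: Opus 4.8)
The plan is to read $A_{W_1}(\tau,\lambda) = -\omega^2 C - \tau B(\lambda)$, with $C := P_{W_1} A_\epsilon|_{W_1}$ and $B(\lambda) := P_{W_1} A_{\tr}|_{W_1} + H_{W_1}(\lambda)$, as a generalized eigenvalue problem in which $C$ is compact and $B(\lambda)$ is coercive, and then to reduce everything to the spectral theorem for compact selfadjoint operators.

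First I would record the properties of $C$. Since $\epsilon$ is real and symmetric, $C$ is selfadjoint; by Assumption~\ref{ass:eps}, $\spl C w_1, w_1\spr_X = \spl \epsilon w_1, w_1\spr_{\boldL^2(\Omega)} \geq c_\epsilon \|w_1\|_{\boldL^2(\Omega)}^2$, so $C$ is positive semidefinite and injective. For compactness I would write $C = \iota^* M(\epsilon)\iota$ with $\iota\colon W_1 \to \boldL^2(\Omega)$ the canonical embedding and $M(\epsilon)$ multiplication by $\epsilon$; because $\curl w_1 = 0$ and $\diveps w_1 = 0$ on $W_1$, the $X$-norm dominates the $H(\curl,\diveps,\tr_{\nv\times};\Omega)$-norm there, so $\iota$ factors through the compact embedding of Lemma~\ref{lem:compactEmbedding} and $C$ is compact. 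Finally \eqref{eq:W1char} exhibits $W_1$ as (isomorphic to) the gradients of the infinite dimensional family of $\epsilon$-harmonic functions with prescribed mean-zero Neumann data, so together with injectivity, $C$ has infinite rank.

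Next, for $\lambda\in(-c_0,c_0)$ the operator $B(\lambda)$ is selfadjoint (see the proof of Lemma~\ref{lem:PWAtrHisPD}) and coercive (Lemma~\ref{lem:PWAtrHisPD}), hence boundedly invertible with a coercive selfadjoint square root $B(\lambda)^{1/2}$ and bounded $B(\lambda)^{-1/2}$. The substitution $u = B(\lambda)^{-1/2}\tilde u$ transforms $A_{W_1}(\tau,\lambda) u = 0$ into $(\omega^2 \tilde C + \tau I)\tilde u = 0$ with $\tilde C := B(\lambda)^{-1/2} C B(\lambda)^{-1/2}$, which is again compact, selfadjoint, positive semidefinite, injective, and of infinite rank. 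Since $\tilde C$ is compact, $\omega^2\tilde C + \tau I$ is Fredholm precisely for $\tau\neq 0$, and conjugation by the invertible $B(\lambda)^{1/2}$ preserves Fredholmness, which gives $\sigma_\mathrm{ess}\big(A_{W_1}(\cdot,\lambda)\big) = \{0\}$. The spectral theorem applied to $\tilde C\geq 0$ then furnishes an infinite sequence of strictly positive eigenvalues $\mu_n(\lambda)\searrow 0$ and no other spectrum off $\{0\}$; setting $\tau_n(\lambda) := -\omega^2\mu_n(\lambda)$ yields the claimed infinite sequence of negative eigenvalues accumulating at zero, while positivity of $\tilde C$ rules out eigenvalues $\tau>0$ and injectivity rules out $\tau=0$.

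I expect the only nonroutine point to be the ``compact plus infinite rank'' package for $C$: one has to check that restricting to $W_1$ does not destroy the compact embedding (it does not, because $\curl$ and $\diveps$ vanish on $W_1$) and that $W_1$ is genuinely infinite dimensional, so that the spectral theorem actually produces an \emph{infinite} sequence rather than finitely many eigenvalues; the rest is bookkeeping with the square root and the Fredholm perturbation argument.
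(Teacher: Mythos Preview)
Your argument is correct and follows essentially the same route as the paper: conjugate $A_{W_1}(\cdot,\lambda)$ by $B(\lambda)^{-1/2}=(P_{W_1}A_{\tr}|_{W_1}+H_{W_1}(\lambda))^{-1/2}$ to reduce to a standard eigenvalue problem for the compact selfadjoint negative injective operator $-\omega^2 B(\lambda)^{-1/2}CB(\lambda)^{-1/2}$ on the infinite dimensional space $W_1$, and invoke the spectral theorem. Your write-up is somewhat more explicit than the paper's in justifying compactness of $C$ (via Lemma~\ref{lem:compactEmbedding}) and $\dim W_1=\infty$ (via \eqref{eq:W1char}), but the strategy is the same.
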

\begin{proof}
Due to Lemma~\ref{lem:PWAtrHisPD} $(P_{W_1} A_{\tr} |_{W_1}+H_{W_1}(\lambda))^{-1/2}$ is well defined and selfadjoint.
It holds $\dim W_1=\infty$ due to \eqref{eq:W1char}. The spectra of $A_{W_1}(\cdot,\lambda)$ and
\begin{align*}
&(P_{W_1} A_{\tr} |_{W_1}+H_{W_1}(\lambda))^{-1/2} A_{W_1}(\cdot,\lambda) (P_{W_1} A_{\tr} |_{W_1}+H_{W_1}(\lambda))^{-1/2}\\
= -\omega^2 &(P_{W_1} A_{\tr} |_{W_1}+H_{W_1}(\lambda))^{-1/2} P_{W_1} A_\epsilon |_{W_1} (P_{W_1} A_{\tr} |_{W_1}+H_{W_1}(\lambda))^{-1/2}
-\cdot I_{W_1}
\end{align*}
coincide. The latter  is the pencil of a standard eigenvalue problem for a compact selfadjoint non-positive injective
operator on an infinite dimensional Hilbert space and respective properties follow.
\end{proof}

\begin{lemma}\label{lem:continuouitytaun}
Let Assumptions~\ref{ass:eps}, \ref{ass:mu}, \ref{ass:Domain}, \ref{ass:UCP} and~\ref{ass:NoNeumann} hold true and $c_0$
be as in Lemma~\ref{lem:PWAtrHisPD}.
Let the sequence of negative eigenvalues $(\tau_n(\lambda))_{n\in\setN}$ to the operator function $A_{W_1}(\cdot,\lambda)$
be ordered non-decreasingly with multiplicity taken into account. The function
$(-c_0,c_0)\to\setR:\lambda\mapsto\tau_n(\lambda)$ is continuous for each $n\in\setN$.
\end{lemma}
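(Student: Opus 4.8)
I want to show $\lambda\mapsto\tau_n(\lambda)$ is continuous on $(-c_0,c_0)$. The key observation is that, as in the proof of Lemma~\ref{lem:existencetaun}, the negative eigenvalues $\tau_n(\lambda)$ of $A_{W_1}(\cdot,\lambda)$ are, up to the fixed factor $-\omega^2$, the reciprocals of the eigenvalues of the compact selfadjoint operator
\begin{align*}
T(\lambda):=(P_{W_1} A_{\tr} |_{W_1}+H_{W_1}(\lambda))^{-1/2} P_{W_1} A_\epsilon |_{W_1} (P_{W_1} A_{\tr} |_{W_1}+H_{W_1}(\lambda))^{-1/2}\in K(W_1),
\end{align*}
which is strictly positive definite (since $A_\epsilon$ is, by Assumption~\ref{ass:eps}, and the similarity transform preserves this). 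So the plan is: (i) show $\lambda\mapsto T(\lambda)$ is continuous from $(-c_0,c_0)$ into $L(W_1)$ in operator norm; (ii) invoke the standard min-max perturbation bound for eigenvalues of selfadjoint operators to conclude that the ordered eigenvalues of $T(\lambda)$ depend continuously on $\lambda$; (iii) translate back to $\tau_n(\lambda)=-\omega^2/\mu_n(\lambda)$, where $\mu_n(\lambda)$ is the $n$-th largest eigenvalue of $T(\lambda)$, and note that $x\mapsto -\omega^2/x$ is continuous on $(0,\infty)$.

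For step (i), I first recall from \eqref{eq:AW} that $H_{W_1}(\lambda)=\lambda P_{W_1} A_{\tr} (P_V (A_c -\omega^2 A_\epsilon-\lambda A_{\tr}) |_V)^{-1} P_V A_{\tr}|_{W_1}$. The map $\lambda\mapsto P_V (A_c -\omega^2 A_\epsilon-\lambda A_{\tr}) |_V$ is affine, hence continuous, and on the range of $\lambda$ considered it is invertible (by \eqref{eq:lambdasmalli} and Assumption~\ref{ass:NoNeumann}); since inversion is continuous on the open set of invertible operators, $\lambda\mapsto H_{W_1}(\lambda)$ is continuous (indeed real-analytic). Next, by Lemma~\ref{lem:PWAtrHisPD}, $P_{W_1} A_{\tr} |_{W_1}+H_{W_1}(\lambda)$ is uniformly strictly positive definite on $(-c_0,c_0)$, so it stays in a fixed compact subset of the positive definite cone bounded away from $0$; on such a set the operator square root and its inverse are continuous (e.g.\ by the holomorphic functional calculus, or by a uniform approximation of $x^{-1/2}$ by polynomials on a compact interval $[m,M]\subset(0,\infty)$). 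Composing, $\lambda\mapsto(P_{W_1} A_{\tr} |_{W_1}+H_{W_1}(\lambda))^{-1/2}$ is norm-continuous, and then so is $T(\lambda)$ since $P_{W_1} A_\epsilon|_{W_1}$ is a fixed bounded operator.

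For step (ii), fix $\lambda_0$ and let $\lambda\to\lambda_0$. By the Courant--Fischer min-max characterization applied to the selfadjoint operators $T(\lambda)$, the $n$-th eigenvalue satisfies $|\mu_n(\lambda)-\mu_n(\lambda_0)|\le\|T(\lambda)-T(\lambda_0)\|_{L(W_1)}\to0$; moreover $\mu_n(\lambda)\ge c>0$ uniformly in a neighborhood of $\lambda_0$ (again by min-max and the uniform lower bound on $T$), so the eigenvalues never collide with $0$ and the reciprocal is well behaved. Hence $\tau_n(\lambda)=-\omega^2/\mu_n(\lambda)$ is continuous at $\lambda_0$. One bookkeeping point to be careful about: the enumeration. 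Lemma~\ref{lem:existencetaun} orders the $\tau_n$ non-decreasingly with multiplicity, which corresponds to ordering the $\mu_n$ non-increasingly with multiplicity; the min-max values are exactly this ordering, so no reindexing subtlety arises.

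\medskip

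\noindent\textit{Anticipated main obstacle.} The only genuinely non-routine point is the norm-continuity of the operator square root $\lambda\mapsto(P_{W_1} A_{\tr} |_{W_1}+H_{W_1}(\lambda))^{-1/2}$; this is where Lemma~\ref{lem:PWAtrHisPD} is essential, since it provides the uniform spectral gap $\operatorname{spec}(P_{W_1} A_{\tr} |_{W_1}+H_{W_1}(\lambda))\subset[m,M]$ with $0<m\le M$ locally uniform in $\lambda$, allowing one to write $x^{-1/2}$ as a uniform limit of polynomials on $[m,M]$ and thereby transfer continuity from $\lambda\mapsto P_{W_1} A_{\tr} |_{W_1}+H_{W_1}(\lambda)$. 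Everything else is the standard min-max stability of eigenvalues of selfadjoint operators together with the continuity of composition, inversion and scalar functional calculus on suitable open sets.
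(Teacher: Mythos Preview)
Your approach is correct and is essentially a spelled-out version of what the paper's one-line proof (a citation of standard perturbation results from Kato and Sanchez--Sanchez) invokes: reduce to a norm-continuous family of compact selfadjoint operators and apply the Courant--Fischer min-max stability of ordered eigenvalues.

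There is, however, a computational slip in your translation step. With $B(\lambda):=P_{W_1}A_{\tr}|_{W_1}+H_{W_1}(\lambda)$ and $T(\lambda):=B(\lambda)^{-1/2}P_{W_1}A_\epsilon|_{W_1}B(\lambda)^{-1/2}$, the conjugated pencil reads
\[
B(\lambda)^{-1/2}A_{W_1}(\tau,\lambda)B(\lambda)^{-1/2}=-\omega^2 T(\lambda)-\tau I_{W_1},
\]
so $\tau$ is an eigenvalue of $A_{W_1}(\cdot,\lambda)$ if and only if $\tau=-\omega^2\mu$ for some eigenvalue $\mu$ of $T(\lambda)$. Thus $\tau_n(\lambda)=-\omega^2\mu_n(\lambda)$, \emph{not} $-\omega^2/\mu_n(\lambda)$. (Your reciprocal formula is also inconsistent with the asymptotics: with $\mu_n\searrow 0$ it would give $\tau_n\to-\infty$, whereas Lemma~\ref{lem:existencetaun} says $\tau_n\to 0$.) This error is harmless for the argument and in fact simplifies it: step~(iii) becomes trivial, and the whole discussion about $\mu_n(\lambda)$ staying bounded away from $0$ so that the reciprocal is well behaved can be dropped. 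Your ordering remark remains correct under the linear relation: non-decreasing $\tau_n<0$ corresponds to non-increasing $\mu_n>0$, which is exactly the min-max enumeration.
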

\begin{proof}
Follows from the ordering of $(\tau_n(\lambda))_{n\in\setN}$ and \cite[\S~3]{Kato:95} or
\cite[Proposition~5.4]{SanchezSanchez:89}.
\end{proof}

\begin{theorem}\label{thm:existencelambdanegative}
Let Assumptions~\ref{ass:eps}, \ref{ass:mu}, \ref{ass:Domain}, \ref{ass:UCP} and~\ref{ass:NoNeumann} hold true. Thence
there exists an infinite sequence $(\lambda_n)_{n\in\setN}$ of negative eigenvalues to $A_X(\cdot)$ which accumulate at
zero.
\end{theorem}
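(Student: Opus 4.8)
The plan is to combine the fixed-point argument for the operator family $A_{W_1}(\cdot,\lambda)$ with the continuity of its eigenvalue branches, exactly along the lines of~\cite{CakoniHaddar:09}. Recall that for $\lambda\in(-c_0,0)$ the number $\lambda$ is an eigenvalue of $A_{W_1}(\cdot)$ — and hence, by the Schur-complement equivalence established before Theorem~\ref{thm:SpecGapZero}, an eigenvalue of $A_X(\cdot)$ — if and only if $\lambda$ is a fixed point of one of the functions $\tau_n(\cdot)$, i.e.\ $\tau_n(\lambda)=\lambda$ for some $n\in\setN$. By Lemma~\ref{lem:existencetaun} each $\tau_n(\lambda)$ is negative and the sequence $(\tau_n(\lambda))_{n\in\setN}$ accumulates only at zero; by Lemma~\ref{lem:continuouitytaun} each map $\lambda\mapsto\tau_n(\lambda)$ is continuous on $(-c_0,c_0)$.

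First I would fix any $n\in\setN$ and study the continuous function $f_n\colon(-c_0,0)\to\setR$, $f_n(\lambda):=\tau_n(\lambda)-\lambda$. Since $\tau_n(\lambda)<0$ for all $\lambda$, we have $f_n(\lambda)\to 0^-$ is \emph{not} immediate; rather, as $\lambda\uparrow 0$ we get $f_n(\lambda)=\tau_n(\lambda)-\lambda$ with both terms tending to something, so I instead look at the two endpoints. As $\lambda\to 0^-$: $\tau_n(\lambda)$ stays bounded away from $0$ (it converges to $\tau_n(0)<0$, the $n$-th negative eigenvalue of the $\lambda=0$ problem $-\omega^2 P_{W_1}A_\epsilon|_{W_1}-\tau P_{W_1}A_{\tr}|_{W_1}$, which is strictly negative), while $-\lambda\to 0$, so $f_n(0^-)=\tau_n(0)<0$. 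As $\lambda\downarrow -c_0$: here $-\lambda\to c_0>0$, and provided $c_0$ is chosen small enough that $\tau_n(\lambda)>-c_0$ for the relevant $n$ — which holds for all sufficiently large $n$ because $\tau_n(\lambda)\to 0$ uniformly-ish as $n\to\infty$, more precisely for each fixed $\lambda$ — we obtain $f_n(-c_0^+)=\tau_n(-c_0)+c_0>0$. By the intermediate value theorem $f_n$ has a zero $\lambda_n\in(-c_0,0)$, which is the sought eigenvalue.

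The remaining point is to guarantee infinitely many \emph{distinct} such $\lambda_n$. For this I would argue that for every $\lambda\in(-c_0,0)$ one has $\tau_n(\lambda)\to 0$ as $n\to\infty$, hence for any fixed small $\eta\in(0,c_0)$ there is $N_\eta$ with $\tau_n(\lambda)\in(-\eta,0)$ for all $n\ge N_\eta$ and all $\lambda$ in a neighborhood of $0$; combined with a lower bound $\tau_n(\lambda)\ge -C$ uniform on $[-c_0+\epsilon,-\epsilon]$ from Lemma~\ref{lem:existencetaun} (the operators $A_{W_1}(\cdot,\lambda)$ depend continuously on $\lambda$ in norm, so their spectra vary continuously), the fixed points $\lambda_n$ cannot all coincide: if $\lambda_n=\lambda_m$ for infinitely many pairs then $\tau_n$ and $\tau_m$ would agree at that point for infinitely many indices, contradicting that $(\tau_k(\lambda))_k$ is an enumeration with multiplicity of a sequence accumulating only at $0$. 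Finally, since each fixed point $\lambda_n$ lies in $(-c_0,0)$ and there are infinitely many of them, they must accumulate at $0$ (their only possible accumulation point in the compact set $[-c_0,0]$, as $0$ is the only point where infinitely many branches $\tau_n$ can be close).

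The main obstacle I anticipate is the bookkeeping that turns "each branch has a fixed point" into "there are infinitely many distinct fixed points accumulating at zero": one must rule out that all the fixed points collapse onto finitely many values, and control where the $\tau_n$ sit relative to the window $(-c_0,0)$ uniformly enough to apply the intermediate value theorem for all large $n$. This is handled by exploiting that $0$ is the only accumulation point of $(\tau_n(\lambda))_n$ together with the norm-continuity of $\lambda\mapsto A_{W_1}(\cdot,\lambda)$, which forces the branches to be squeezed into any neighborhood of $0$ for $n$ large; the interval $(-c_0,0)$ then catches infinitely many of them.
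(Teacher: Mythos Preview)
Your approach coincides with the paper's: apply the intermediate value theorem to the continuous branches $\tau_n(\cdot)$ from Lemmata~\ref{lem:existencetaun} and~\ref{lem:continuouitytaun}, then argue that infinitely many distinct fixed points are produced and must accumulate at zero. The ideas are all correct; the execution is less tidy than the paper's in two places.

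For the IVT step you try to work near the left endpoint $-c_0$, speak of ``$c_0$ chosen small enough'', and invoke a uniformity (``uniformly-ish'') you then retract. None of this is needed, and $c_0$ is fixed by Lemma~\ref{lem:PWAtrHisPD}. The paper simply fixes an arbitrary interior point $\lambda\in(-c_0,0)$; since $\tau_n(\lambda)\to 0$ there is some $n_1$ with $\tau_{n_1}(\lambda)>\lambda$, so $f_{n_1}(\lambda)>0$, while $f_{n_1}(0)=\tau_{n_1}(0)<0$. The IVT gives a root $\lambda_1\in(\lambda,0)$.

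For distinctness and accumulation, your pigeonhole reasoning (a value $\lambda^*$ hit by infinitely many branches would be an eigenvalue of $A_{W_1}(\cdot,\lambda^*)$ of infinite multiplicity, impossible for the compact problem of Lemma~\ref{lem:existencetaun}) is valid but stated loosely. The paper avoids the bookkeeping altogether by iterating: having found $\lambda_1$, pick a new $\lambda\in(\lambda_1,0)$ and a new index $n_2$ with $\tau_{n_2}(\lambda)>\lambda$, producing $\lambda_2\in(\lambda_1,0)$, hence $\lambda_2\neq\lambda_1$; induction yields an infinite sequence in $(-c_0,0)$. Accumulation at zero is then an immediate consequence of Theorem~\ref{thm:SpecBasic}, which rules out accumulation of eigenvalues at any point of $\setR\setminus\{0\}$.
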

\begin{proof}
Let $(\tau_n(\lambda))_{n\in\setN}$ be as in Lemma~\ref{lem:continuouitytaun}. Let $\lambda\in(-c_0,0)$.
Let $n_1\in\setN$ be so that $\lambda<\tau_{n_1}(\lambda)$. Consider the function $f_1(t):=\tau_{n_1}(t)-t$.
It hold: $f_1$ is continuous on $(-c_0,c_0)$ due to Lemma~\ref{lem:continuouitytaun}, $f_1(\lambda)>0$ and
$f_1(0)=\tau_{n_1}(0)<0$. It follows from the Intermediate Value Theorem that there exists $\lambda_1\in (\lambda,0)$
with $f_1(\lambda_1)=0$, i.e.\ $\lambda_1$ is an eigenvalue to $A_{W_1}(\cdot)$.

Let now $\lambda\in(\lambda_1,0)$ and $n_2\in\setN$ be so that $\lambda<\tau_{n_2}(\lambda)$. We can repeat the
former procedure to construct a second eigenvalue $\lambda_2\in(\lambda_1,0)$ to $A_{W_1}(\cdot)$. Since
$\lambda_2\in(\lambda_1,0)$, $\lambda_2$ is distinct from $\lambda_1$.
We can repeat the former procedure inductively to construct a sequence  $(\lambda_n\in(-c_0,0))_{n\in\setN}$ of pairwise
distinct eigenvalues to $A_{W_1}(\cdot)$.

As already discussed, the spectra of $A_{W_1}(\cdot)$ and $A_X(\cdot)$ coincide on the ball \eqref{eq:lambdasmalli}.
Since $[-c_0,0]$ is compact and the sequence $(\lambda_n\in(-c_0,0))_{n\in\setN}$ has an infinite index set,
$(\lambda_n)_{n\in\setN}$ admits a cluster point in $[-c_0,0]$.
Due to Theorem~\ref{thm:SpecBasic} $\sigma\big(A_X(\cdot)\big)$ admits no cluster points in $\setC\setminus\{0\}$. Thus
$(\lambda_n)_{n\in\setN}$ accumulate at zero. The claim is proven.
\end{proof}

\section{Spectrum in the neighborhood of infinity}\label{sec:spectruminfty}
First, we establish in Theorem~\ref{thm:SpecGapInfty} the absence of eigenvalues of $A_X(\cdot)$ in the interval
$(-\infty,-c)$ for sufficiently large $c>0$. Later on in Theorem~\ref{thm:existencelambdapositiv}, we establish the
existence of an infinite sequence of positive eigenvalues of $A_X(\cdot)$ which accumulate at $+\infty$.

\subsection{The spectrum near negative infinity}
We require the following additional assumption for Theorem~\ref{thm:SpecGapInfty}.
\begin{assumption}[$\omega^2$ is no Dirichlet eigenvalue]\label{ass:NoDirichlet2}
There exists no non-trivial solution $u\in H(\curl,\tr_{\nv\times}^0;\Omega)$ to
\quad $\curl\mu^{-1}\curl u-\omega^2\epsilon u=0$ \quad in $\Omega$.
\end{assumption}

\begin{lemma}[Nitsche penalty technique]\label{lem:NitschePenalty}
Let Assumptions~\ref{ass:eps}, \ref{ass:mu}, \ref{ass:Domain} hold true.
Let $f\in \boldL^2(\Omega)$ and $u\in H(\curl,\tr_{\nv\times}^0;\Omega)$ be the solution to
\, $\curl\mu^{-1}\curl u+\epsilon u=f$ \quad in $\Omega$. For $\lambda>0$ let $u_\lambda\in
H(\curl,\tr_{\nv\times};\Omega)$ be the solution to
\begin{align*}
\spl \mu^{-1} \curl u_\lambda,\curl u' \spr_{\boldL^2(\Omega)} + \spl \epsilon u_\lambda, u' \spr_{\boldL^2(\Omega)}
+ \lambda \spl \tr_{\nv\times} u_\lambda, \tr_{\nv\times} u' \spr_{\boldL_t^2(\partial\Omega)}
= \spl f, u' \spr_{\boldL^2(\Omega)}
\end{align*}
for all $u'\in H(\curl,\tr_{\nv\times};\Omega)$. Then there exist $C,\lambda_0>0$ so that
\begin{align*}
\|u-u_\lambda\|_{H(\curl,\tr_{\nv\times};\Omega)} \leq C/\lambda
\end{align*}
for all $\lambda>\lambda_0$.
\end{lemma}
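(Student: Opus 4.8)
The plan is to run a Nitsche penalty argument. Introduce the bilinear form $a(w,w'):=\spl\mu^{-1}\curl w,\curl w'\spr_{\boldL^2(\Omega)}+\spl\epsilon w,w'\spr_{\boldL^2(\Omega)}$ on $H(\curl;\Omega)$; by Assumptions~\ref{ass:eps} and~\ref{ass:mu} it is bounded and coercive with coercivity constant $\geq\min\{c_\mu,c_\epsilon\}$, which is in particular independent of $\lambda$. Hence $u$ and $u_\lambda$ are well defined by Lax--Milgram — for $u_\lambda$ the penalty term $\lambda\|\tr_{\nv\times}\cdot\|^2_{\boldL^2_t(\partial\Omega)}$ only strengthens coercivity — and testing each problem with its own solution yields, uniformly in $\lambda>0$,
\begin{align*}
\|u\|_{H(\curl;\Omega)}\lesssim\|f\|_{\boldL^2(\Omega)},\qquad
\|u_\lambda\|_{H(\curl;\Omega)}+\lambda^{1/2}\|\tr_{\nv\times}u_\lambda\|_{\boldL^2_t(\partial\Omega)}\lesssim\|f\|_{\boldL^2(\Omega)}.
\end{align*}

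Next I would set up the error equation. Both $u$ and $u_\lambda$ satisfy $\curl\mu^{-1}\curl\,\cdot\,+\epsilon\,\cdot\,=f$ in $\Omega$ distributionally, so $\mu^{-1}\curl u,\mu^{-1}\curl u_\lambda\in H(\curl;\Omega)$ and Green's formula for $\curl$ applies. Since $\tr_{\nv\times}u=0$, a cutoff near $\partial\Omega$ (where $\mu=\I$ by Assumption~\ref{ass:mu}, so $\mu^{-1}\curl u=\curl u$ there) together with $\tr_{\nv\cdot}\curl u=\div_\partial\tr_{\nv\times}u=0$ and the Costabel trace theorem~\eqref{eq:CostabelTrace} shows that the Neumann trace $g$ of $u$ (equal up to a fixed tangential rotation to $\tr_{\nv\times}\mu^{-1}\curl u$) actually lies in $\boldL^2_t(\partial\Omega)$ with $\|g\|_{\boldL^2_t(\partial\Omega)}\lesssim\|f\|_{\boldL^2(\Omega)}$. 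Writing $e:=u-u_\lambda$ and subtracting the two weak formulations (using $\tr_{\nv\times}u=0$, hence $\tr_{\nv\times}u_\lambda=-\tr_{\nv\times}e$) gives
\begin{align*}
a(e,u')+\lambda\spl\tr_{\nv\times}e,\tr_{\nv\times}u'\spr_{\boldL^2_t(\partial\Omega)}
=\spl g,\tr_{\nv\times}u'\spr_{\boldL^2_t(\partial\Omega)}\qquad\text{for all }u'\in H(\curl,\tr_{\nv\times};\Omega).
\end{align*}
Restricting to $u'\in H(\curl,\tr_{\nv\times}^0;\Omega)$ yields $a(e,u')=0$, i.e.\ $e$ is $a$-orthogonal to the fields with vanishing tangential trace. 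Choosing $u'=e$ and dropping the nonnegative term $a(e,e)$ gives $\lambda\|\tr_{\nv\times}e\|^2_{\boldL^2_t(\partial\Omega)}\leq\|g\|_{\boldL^2_t(\partial\Omega)}\|\tr_{\nv\times}e\|_{\boldL^2_t(\partial\Omega)}$, hence $\|\tr_{\nv\times}e\|_{\boldL^2_t(\partial\Omega)}\lesssim\|f\|_{\boldL^2(\Omega)}/\lambda$; this already settles the boundary part of the target norm.

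For the $H(\curl;\Omega)$-part I would use a lifting: pick $\eta\in H(\curl;\Omega)$ with $\tr_{\nv\times}\eta=\tr_{\nv\times}e$, so $e-\eta\in H(\curl,\tr_{\nv\times}^0;\Omega)$ and, by the $a$-orthogonality above, $a(e,e)=a(e,\eta)$; coercivity and boundedness of $a$ then give $\|e\|_{H(\curl;\Omega)}\lesssim\|\eta\|_{H(\curl;\Omega)}$, so it suffices to construct $\eta$ with $\|\eta\|_{H(\curl;\Omega)}\lesssim\|f\|_{\boldL^2(\Omega)}/\lambda$. Using the bounded right inverse of $\tr_{\nv\times}\colon H(\curl;\Omega)\to\boldH^{-1/2}(\div_\partial;\partial\Omega)$, this reduces to $\|\tr_{\nv\times}e\|_{\boldH^{-1/2}(\div_\partial;\partial\Omega)}\lesssim\|f\|_{\boldL^2(\Omega)}/\lambda$. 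The $\boldH^{-1/2}(\partial\Omega)$-part follows from the $\boldL^2_t$-bound just obtained; the term $\div_\partial\tr_{\nv\times}e=\tr_{\nv\cdot}\curl e$ is the delicate one, since estimating it crudely returns $\|e\|_{H(\curl;\Omega)}$ again (which by the pure energy test is only $O(\lambda^{-1/2})$). Here is where Assumption~\ref{ass:Domain} must be invoked: one localizes to $\Omega\setminus\hat\Omega$ (where $\mu=\I$, so $\mu^{-1}\curl e$ is divergence free near $\partial\Omega$), Helmholtz/Hodge-decomposes, and applies the shift theorem together with the embedding and trace-regularity results of Section~\ref{sec:generalsetting} to convert the penalized natural boundary condition $\lambda\tr_{\nv\times}u_\lambda=\pm(\text{rotation of})\,\tr_{\nv\times}\mu^{-1}\curl u_\lambda$ and the uniform bound $\|\tr_{\nv\times}\mu^{-1}\curl u_\lambda\|_{\boldH^{-1/2}(\div_\partial;\partial\Omega)}\lesssim\|f\|_{\boldL^2(\Omega)}$ into the required $O(\lambda^{-1})$ control in the full $\boldH^{-1/2}(\div_\partial;\partial\Omega)$-norm (closing, if necessary, a contraction in $\|e\|_{H(\curl;\Omega)}$). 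Feeding this back gives $\|e\|_{H(\curl;\Omega)}\lesssim\|f\|_{\boldL^2(\Omega)}/\lambda$, and adding the squared boundary bound yields the claim with $C=c\,\|f\|_{\boldL^2(\Omega)}$ and any $\lambda_0>0$.

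The main obstacle is exactly this last step: the plain energy test delivers only $O(\lambda^{-1/2})$ in the volume norm, and promoting it to the optimal $O(\lambda^{-1})$ forces one into careful Maxwell trace-space bookkeeping — keeping $\boldL^2_t(\partial\Omega)$, $\boldH^{-1/2}(\div_\partial;\partial\Omega)$ and $\boldH^{-1/2}(\curl_\partial;\partial\Omega)$ apart, tracking the rotation exchanging the latter two, and using the shift theorem with data chosen so that the Neumann trace of $\mu^{-1}\curl u_\lambda$ is controlled in a norm strong enough to absorb the factor $\lambda$. Throughout, the localization to $\Omega\setminus\hat\Omega$, where $\mu=\I$ (so $\mu^{-1}\curl$ is just $\curl$ and its divergence vanishes), is what reduces these manipulations to the Costabel- and Weber-type statements already recorded in the paper; everything else is routine.
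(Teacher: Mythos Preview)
Your approach is genuinely different from the paper's, and it has a real gap at the step you yourself flag.

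The paper does \emph{not} run a direct energy/lifting argument. Instead it introduces the Neumann data as a Lagrange multiplier and rewrites both problems as saddle point systems on $X\times\boldL^2_t(\partial\Omega)$: the constrained problem for $u$ becomes
\[
\begin{pmatrix} A_c+A_\epsilon & B_{\tr}^* \\ B_{\tr} & 0 \end{pmatrix}
\begin{pmatrix} u\\ \phi \end{pmatrix}=\begin{pmatrix}\hat f\\0\end{pmatrix},
\]
with $\phi=\nu\times\tr_{\nu\times}\mu^{-1}\curl u\in\boldL^2_t(\partial\Omega)$ (the $\boldL^2_t$-regularity coming from $\tr_{\nv\times}u=0$ and $\mu=\I$ near $\partial\Omega$), while the penalized problem for $u_\lambda$ is the same system with the lower-right block replaced by $-\lambda^{-1}I_{\boldL^2_t(\partial\Omega)}$ and multiplier $\phi_\lambda=\lambda\,\tr_{\nv\times}u_\lambda$. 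Uniform Brezzi stability of both systems then gives $\|(u,\phi)-(u_\lambda,\phi_\lambda)\|=O(\lambda^{-1})$ immediately, since the two block operators differ by an operator of norm $\lambda^{-1}$. This is exactly Stenberg's viewpoint on penalty methods, and it delivers the optimal rate in one stroke without ever having to lift the error trace or invoke the shift theorem of Assumption~\ref{ass:Domain}.

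In your route, by contrast, the passage from the easy $O(\lambda^{-1/2})$ volume bound to the claimed $O(\lambda^{-1})$ is not actually carried out. The lifting you propose requires controlling $\|\tr_{\nv\times}e\|_{\boldH^{-1/2}(\div_\partial;\partial\Omega)}$, and the dangerous piece $\div_\partial\tr_{\nv\times}e=\tr_{\nv\cdot}\curl e$ is, a priori, only bounded by $\|e\|_{H(\curl;\Omega)}$ --- which is precisely the quantity you are trying to estimate. Your sketch (``localize, Hodge-decompose, apply the shift theorem, close a contraction'') does not explain how Assumption~\ref{ass:Domain}, which is a scalar Neumann--Laplace regularity statement, produces the missing $H^{-1/2}$ control on $\tr_{\nv\cdot}\curl e$ with a factor $\lambda^{-1}$; the penalized natural boundary condition relates $\tr_{\nv\times}u_\lambda$ to the \emph{tangential} trace of $\mu^{-1}\curl u_\lambda$, not to its normal trace, so the bookkeeping you allude to does not obviously close. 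The saddle point reformulation avoids this entirely because the multiplier $\phi_\lambda$ already lives in $\boldL^2_t(\partial\Omega)$ and its boundedness is part of the stability statement, so no separate control of $\div_\partial\tr_{\nv\times}e$ is ever needed.
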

\begin{proof}
We are not aware of a direct appropriate reference for this lemma. Although we believe that the technique applied in
this proof is common knowledge. We introduce mixed equations for $u$ (and $u_\lambda$) as e.g.\ in \cite{Stenberg:95}
as follows. Let $\hat f\in X$ be so that $\spl \hat f,u'\spr_X=\spl f, u' \spr_{\boldL^2(\Omega)}$ for all $u'\in X$.
Due to $u\in H(\curl,\tr_{\nv\times}^0;\Omega)$ and Assumption~\ref{ass:mu} it follows
$\phi:=\nv\times\tr_{\nu\times} \mu^{-1}\curl u\in \boldL^2_t(\partial\Omega)$. It holds
$\phi_\lambda:=\nu\times\tr_{\nu\times} \mu^{-1}\curl u_\lambda=\lambda\tr_{\nu\times} u_\lambda
\in \boldL^2_t(\partial\Omega)$ too.
Integration by parts yields that $(u,\phi),(u_\lambda,\phi_\lambda) \in X\times \boldL^2_t(\partial\Omega)$ solve
\begin{align}\label{eq:Nitsche1}
\bpm A_c+A_\epsilon & B_{\tr}^* \\ B_{\tr} & 0 \epm
\bpm u\\\phi \epm = \bpm \hat f\\0 \epm
\end{align}
and
\begin{align}\label{eq:Nitsche2}
\bpm A_c+A_\epsilon & B_{\tr}^* \\ B_{\tr} & -\lambda^{-1} I_{\boldL^2_t(\partial\Omega)} \epm
\bpm u_\lambda\\\phi_\lambda \epm = \bpm \hat f\\0 \epm
\end{align}
respective. Both \eqref{eq:Nitsche1} and \eqref{eq:Nitsche2} are stable saddle point problems
\cite[Theorem~4.3.1]{BoffiBrezziFortin:13}. Since \eqref{eq:Nitsche2} is a perturbation of \eqref{eq:Nitsche1} by
magnitude $\lambda^{-1}$, the claim follows.
\end{proof}

\begin{theorem}\label{thm:SpecGapInfty}
Let Assumptions~\ref{ass:eps}, \ref{ass:mu}, \ref{ass:Domain}, \ref{ass:UCP} and~\ref{ass:NoDirichlet2} hold
true. Thence there exists $c>0$ so that $A_X(\lambda)$ is bijective for all $\lambda\in (-\infty,-c)$.
\end{theorem}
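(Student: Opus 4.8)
The plan is to exploit the block structure \eqref{eq:Ablock} and show that for $|\lambda|$ large the Schur-complement with respect to the $W_1$-component (rather than the $V$-component) governs bijectivity, and that this Schur-complement is invertible because it converges, after rescaling by $\lambda$, to the operator from Lemma~\ref{lem:NitschePenalty}. Concretely, for $\lambda<0$ the diagonal block $-P_{W_1}(\omega^2 A_\epsilon+\lambda A_{\tr})|_{W_1}$ is, for $|\lambda|$ large, invertible: by Lemma~\ref{lem:PWAtrisPD} $P_{W_1}A_{\tr}|_{W_1}$ is strictly positive definite, so $-\lambda P_{W_1}A_{\tr}|_{W_1}$ is uniformly positive definite of size $|\lambda|$, and the bounded perturbation $-\omega^2 P_{W_1}A_\epsilon|_{W_1}$ is controlled for $|\lambda|$ beyond some threshold. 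Hence for such $\lambda$ one may form the Schur-complement $A_V(\lambda)\in L(V)$ of $(P_V+P_{W_1})A_X(\lambda)|_{V\oplus W_1}$ with respect to $P_{W_1}u=w_1$, and $A_X(\lambda)$ is bijective iff $A_V(\lambda)$ is.

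Next I would compute $A_V(\lambda)$ explicitly. It has the form
\begin{align*}
A_V(\lambda)=P_V(A_c-\omega^2 A_\epsilon-\lambda A_{\tr})|_V
-\lambda^2 P_V A_{\tr}|_{W_1}\big(P_{W_1}(\omega^2 A_\epsilon+\lambda A_{\tr})|_{W_1}\big)^{-1}P_{W_1}A_{\tr}|_V.
\end{align*}
Dividing by $-\lambda$, one sees that $-\lambda^{-1}A_V(\lambda)$ tends, as $\lambda\to-\infty$, to the operator
$P_V A_{\tr}|_V + P_V A_{\tr}|_{W_1}(P_{W_1}A_{\tr}|_{W_1})^{-1}P_{W_1}A_{\tr}|_V$ plus a term of order $\lambda^{-1}$; but this limiting operator need not be invertible on all of $V$, so a cruder estimate is not enough. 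Instead I would argue as in the Nitsche penalty argument: rescale and recognise $A_X(\lambda)u=0$ as the mixed system \eqref{eq:Nitsche2} with $\hat f=0$ and $\lambda^{-1}\to 0$. More precisely, $A_X(\lambda)$ being non-bijective for a sequence $\lambda_k\to-\infty$ would produce, after normalising eigenfunctions $u_k$ in $X$ and setting $\phi_k:=\lambda_k\tr_{\nv\times}u_k$, a sequence solving $(A_c+A_\epsilon)u_k+B_{\tr}^*\phi_k=(1+\omega^2)A_\epsilon u_k + (\lambda_k+1)B_{\tr}^*B_{\tr}u_k$... — rather, it is cleaner to use that $A_X(\lambda)$ differs from the penalised form $A_c+(1)A_\epsilon$ plus the $\lambda$-penalty only by the bounded operator $-(1+\omega^2)A_\epsilon$; the stable saddle-point structure \eqref{eq:Nitsche1}, guaranteed by Assumption~\ref{ass:NoDirichlet2} (which makes $\curl\mu^{-1}\curl-\omega^2\epsilon$ with tangential Dirichlet data injective, hence, with Fredholmness, bijective), combined with \cite[Theorem~4.3.1]{BoffiBrezziFortin:13}, gives a uniform-in-$\lambda$ inf-sup bound for the perturbed saddle-point operator of \eqref{eq:Nitsche2} once $\lambda>\lambda_0$. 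That uniform inf-sup bound is exactly the statement that $A_X(\lambda)$ is bijective for $\lambda<-\lambda_0=:-c$.

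The cleanest route is therefore: (i) for $\lambda<0$ with $|\lambda|$ large, reduce bijectivity of $A_X(\lambda)$ to bijectivity of the mixed operator in \eqref{eq:Nitsche2} with right-hand side $0$ — note $A_X(\lambda)u=0$ means $\spl\mu^{-1}\curl u,\curl u'\spr-\omega^2\spl\epsilon u,u'\spr-\lambda\spl\tr_{\nv\times}u,\tr_{\nv\times}u'\spr=0$, which is \eqref{eq:Nitsche2} up to replacing $A_\epsilon$ by $-\omega^2 A_\epsilon$ and relabelling $\lambda\mapsto-\lambda$, $\phi\mapsto-\lambda\tr_{\nv\times}u$; (ii) observe that Assumption~\ref{ass:NoDirichlet2} together with Theorem~\ref{thm:SpecBasic}-type Fredholmness makes the unperturbed block operator $\bigl(\begin{smallmatrix}A_c-\omega^2A_\epsilon & B_{\tr}^*\\ B_{\tr}&0\end{smallmatrix}\bigr)$ bijective (its kernel consists of pairs $(u,\phi)$ with $u\in H(\curl,\tr_{\nv\times}^0;\Omega)$ solving $\curl\mu^{-1}\curl u-\omega^2\epsilon u=0$, hence $u=0$, and then $B_{\tr}^*\phi=0$ forces $\phi=0$ by surjectivity of $B_{\tr}$ onto a dense range / the trace theorem); (iii) invoke that a compact (indeed, bounded rank-one-in-the-second-slot) perturbation of size $\lambda^{-1}$ of a bijective stable saddle-point operator stays bijective, with norm of the inverse uniformly bounded, once $\lambda^{-1}$ is small enough — this is precisely \cite[Theorem~4.3.1]{BoffiBrezziFortin:13}. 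The main obstacle is step (ii): verifying that the unperturbed saddle-point operator is genuinely bijective (not merely injective) on $X\times\boldL^2_t(\partial\Omega)$, i.e. checking the full inf-sup / Babuška conditions under Assumptions~\ref{ass:eps},~\ref{ass:mu},~\ref{ass:Domain},~\ref{ass:UCP},~\ref{ass:NoDirichlet2}; the compact-embedding results (Lemma~\ref{lem:compactEmbedding}, \eqref{eq:Weber}) and the decomposition of Theorem~\ref{thm:VW} are what make the Fredholm alternative applicable, after which Assumption~\ref{ass:NoDirichlet2} removes the kernel.
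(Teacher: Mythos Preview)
Your proposal is meandering but the ``cleanest route'' you settle on is valid and genuinely different from the paper's proof. The paper argues by \emph{contradiction}: assuming eigenvalues $\lambda_n\to-\infty$ with normalized eigenfunctions $u_n$, it rewrites $A_X(\lambda_n)u_n=0$ as $u_n=(\omega^2+1)(A_c+A_\epsilon+|\lambda_n|A_{\tr})^{-1}A_\epsilon u_n$, uses that $u_n\in V\oplus W_1$ so that Lemma~\ref{lem:compactEmbedding} yields a subsequence $Eu_{n(m)}\to f$ in $\boldL^2(\Omega)$, applies Lemma~\ref{lem:NitschePenalty} (for the \emph{coercive} operator $A_c+A_\epsilon$) to conclude $u_{n(m)}\to(\omega^2+1)u$ with $u$ a Dirichlet solution, and invokes Assumption~\ref{ass:NoDirichlet2} to get $u=0$, a contradiction. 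You actually sketched this contradiction argument yourself before abandoning it.

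Your direct route---showing the unperturbed saddle-point operator $\bigl(\begin{smallmatrix}A_c-\omega^2A_\epsilon & B_{\tr}^*\\ B_{\tr}&0\end{smallmatrix}\bigr)$ is bijective and then perturbing---also works, but the Fredholmness in your step~(ii) needs more care than you indicate: $A_\epsilon$ is \emph{not} compact on all of $X$ (the embedding $W_2\hookrightarrow\boldL^2(\Omega)$ is not compact), so you cannot treat $-(\omega^2+1)A_\epsilon$ as a compact perturbation of the coercive saddle-point system on $X\times\boldL^2_t(\partial\Omega)$. You must first split off $W_2$ via Theorem~\ref{thm:VW} (the $W_2$-block is $-\omega^2 P_{W_2}A_\epsilon|_{W_2}$, bijective and decoupled since $B_{\tr}|_{W_2}=0$), and only then is $A_\epsilon|_{V\oplus W_1}$ compact by Lemma~\ref{lem:compactEmbedding}. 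With that fix your argument goes through. Two minor slips: the $(2,2)$-perturbation $-\lambda^{-1}I_{\boldL^2_t(\partial\Omega)}$ is neither compact nor ``rank-one'', merely bounded of size $|\lambda|^{-1}$ (which suffices); and the stability of the coercive saddle-point system~\eqref{eq:Nitsche1} does not use Assumption~\ref{ass:NoDirichlet2}. The paper's approach is tidier because it keeps the coercive operator in the Nitsche lemma and isolates the compactness argument to a single extraction step; yours is more direct but front-loads the structural work.
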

\begin{proof}
Assume the contrary. Thus there exists a sequence $(\lambda_n<0)_{n\in\setN}$ with $\lim_{n\in\setN} \lambda_n=-\infty$,
so that $A_X(\lambda_n)$ is not bijective. Due Theorem~\ref{thm:SpecBasic} $(\lambda_n)_{n\in\setN}$ are eigenvalues
of $A_X(\cdot)$. Hence let $(u_n\in X)_{n\in\setN}$ be a corresponding sequence of normalized eigenfunctions:
$A_X(\lambda_n)u_n=0$ and $\|u_n\|_X=1$ for each $n\in\setN$. It follows
\begin{align}\label{eq:uequals}
u_n=(\omega^2+1)(A_c+A_\epsilon+|\lambda_n| A_{\tr})^{-1} A_\epsilon u_n.
\end{align}
As already discussed at the end of Section~\ref{sec:evp}, it holds $u_n\in V\oplus W_1$ for each $n\in\setN$.
Denote $E \in L\big(X,\boldL^2(\Omega)\big)$ the embedding operator and $M_\epsilon\in L\big(\boldL^2(\Omega)\big)$ the multiplication
operator with symbol $\epsilon$. Thus $A_\epsilon = E^*M_\epsilon E$.
Due to Lemma~\ref{lem:compactEmbedding} there exist $f\in \boldL^2(\Omega)$ and a subsequence $(n(m))_{m\in\setN}$ so
that $\lim_{m\in\setN} Eu_{n(m)}=f$. Let $u\in H(\curl,\tr_{\nv\times}^0;\Omega)$ be the solution to
$\curl\mu^{-1}\curl u+\epsilon u=\epsilon f$ in $\Omega$. It follows from Lemma~\ref{lem:NitschePenalty} and
\eqref{eq:uequals} that $\lim_{m\in\setN} u_{n(m)}=(\omega^2+1)u$ in $X$. Since
\begin{align*}
\curl\mu^{-1}\curl u_{n(m)}-\omega^2\epsilon u_{n(m)}=0 \quad\text{ in }\Omega
\end{align*}
for each $m\in\setN$, it follows that
\begin{align*}
\curl\mu^{-1}\curl u-\omega^2\epsilon u=0 \quad\text{ in }\Omega
\end{align*}
as well. Due to Assumption~\ref{ass:NoDirichlet2} it holds $u=0$, which is a contradiction to $\|u_{n(m)}\|_X=1$
for each $m\in\setN$. The claim is proven.
\end{proof}

\subsection{The spectrum near positive infinity}
$P_{W_1} A_{\tr} |_{W_1}\in L(W_1)$ is strictly positive definite due to Lemma \ref{lem:PWAtrisPD}. Hence there exists
$c_\infty>0$ so that
\begin{align}
P_{W_1} (\omega^2 A_\epsilon + \lambda A_{\tr})|_{W_1}=\lambda P_{W_1} (\omega^2\lambda^{-1} A_\epsilon + A_{\tr})|_{W_1}
\end{align}
is coercive and thus bijective for each $\lambda\in\setC$ with $|\lambda|>c_\infty$.
(Since $A_\epsilon$ is positive semi definite, it follows even that $P_{W_1} (\omega^2 A_\epsilon + \lambda A_{\tr})|_{W_1}$
is coercive for each $\lambda\in\setC\setminus\setR^-_0$. However, we will not use this fact.)
Hence for $|\lambda|>c_\infty$ we build and study the Schur complement of
$(P_V+P_{W_1}) A_X(\lambda) |_{V\oplus W_1}$ with respect to $P_{W_1}u=w_1$:
\begin{subequations}
\begin{align}
A_V(\lambda)&:=P_V (A_c -\omega^2 A_\epsilon) |_V-\lambda K_V(\lambda) \in L(V),\\
K_V(\lambda)&:=P_V(A_{\tr}-A_{\tr}S_V(\lambda)P_{W_1}A_{\tr})|_V  \in L(V),\\
S_V(\lambda)&:=\Big(P_{W_1}(\omega^2 \lambda^{-1}A_\epsilon + A_{\tr} )|_{W_1}\Big)^{-1}  \in L(W_1).
\end{align}
\end{subequations}
It is straight forward to see, that for $\lambda$ satisfying $|\lambda|>c_\infty$, $\lambda$ is an eigenvalue to
$A_X(\cdot)$ if and only if $\lambda$ is an eigenvalue to $A_V(\cdot)$. Hence to study the eigenvalues of $A_X(\cdot)$
in a neighborhood of infinity, it completely suffices to study the eigenvalues of $A_V(\cdot)$ in a neighborhood of
infinity. It will be more convenient to work with $\lambda^{-1}$ instead of $\lambda$. Hence let
\begin{subequations}
\begin{align}
\tilde A_V(\tilde \lambda)&:=\tilde \lambda A_V(\tilde\lambda^{-1})
=\tilde \lambda P_V (A_c -\omega^2 A_\epsilon) |_V -\tilde K_V(\tilde\lambda) \in L(V),\\
\tilde K_V(\tilde\lambda)&:=K_V(\tilde\lambda^{-1})
=P_V(A_{\tr}-A_{\tr}\tilde S_V(\tilde \lambda)P_{W_1}A_{\tr})|_V \in L(V),\\
\tilde S_V(\tilde\lambda)&:=S_V(\tilde\lambda^{-1})=
\Big(P_{W_1}(\omega^2 \tilde\lambda A_\epsilon + A_{\tr} )|_{W_1}\Big)^{-1} \in L(W_1),
\end{align}
\end{subequations}
for $\tilde\lambda\in\setC$ with $|\tilde\lambda|<c_\infty^{-1}$. Again, it is straight forward to see that
$\tilde\lambda$ with $0<|\tilde\lambda|<c_\infty^{-1}$ is an eigenvalue to $\tilde A_V(\cdot)$ if and only if
$\tilde\lambda^{-1}$ with $|\tilde\lambda^{-1}|>c_\infty$ is an eigenvalue to $A_V(\cdot)$. Thus we study the
eigenvalues of $\tilde A_V(\cdot)$ in the ball
\begin{align}
B_{c_\infty^{-1}}:=\{z\in\setC\colon |z|<c_\infty^{-1}\}.
\end{align}
To this end we introduce
\begin{align}\label{eq:tAVtaulambda}
\tilde A_V(\tilde\tau,\tilde\lambda):=\tilde \tau P_V (A_c -\omega^2 A_\epsilon) |_V -\tilde K_V(\tilde \lambda).
\end{align}
We note that $\tilde\lambda\in B_{c_\infty^{-1}}$ is an eigenvalue of $\tilde A_V(\cdot)$, if and only if $\tilde\tau$
is an eigenvalue of $\tilde A_V(\cdot,\tilde\lambda)$ and $\tilde\tau=\tilde\lambda\in B_{c_\infty^{-1}}$.

We would like to proceed as in Section~\ref{sec:spectrumzero}. Operator $\tilde K_V(\tilde\lambda)$ is compact due
Lemma~\ref{lem:Vtraceregularity}. However different to Section~\ref{sec:spectrumzero}, $P_V (A_c -\omega^2 A_\epsilon) |_V$
is (for arbitrary $\omega>0$) not definite! Moreover, $\tilde K_V(\tilde\lambda)$ is not injective! Indeed
$\{\curl f\colon f\in(C^\infty_0(\hat\Omega \setminus \check\Omega))^3\}\subset \ker \tilde K_V(\tilde\lambda)$.
Therefore, we introduce the abstract Lemma~\ref{lem:abstract}. Subsequently, we prove that the conditions of
Lemma~\ref{lem:abstract} are satisfied and the lemma can be employed for our particular application.
We derive the results aimed at in Lemma~\ref{lem:existencetaunV} and consequently continue the analysis in the same
manner as in Section~\ref{sec:spectrumzero}.

\begin{lemma}\label{lem:abstract}
Let $Y$ be a separable Hilbert space. Let $G\in L(Y)$ be compact, selfadjoint and $I+G$ be bijective.
Let $K\in L(Y)$ be compact, selfadjoint, positive semi definite and so that
$\ker K=\ker (K^{1/2} (I+G) K^{1/2})$ and $\dim (\ker K)^\bot=\infty$.
Let $P_{(\ker K)^\bot}$ be the orthogonal projection onto $(\ker K)^\bot$ and $P_{(\ker K)^\bot}(I+G)|_{(\ker K)^\bot}$
be bijective.\\
Then the spectra of $(I+G)K$ and $K^{1/2} (I+G) K^{1/2}$ coincide and consist of the essential spectrum $\{0\}$ and
an infinite sequence $(\tau_n\in\setR)_{n\in\setN}$ of non-zero eigenvalues.
Apart from a finite set all $(\tau_n)_{n\in\setN}$ are positive and it holds $\lim_{n\in\setN} \tau_n=0$.
\end{lemma}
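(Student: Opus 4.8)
The plan is to reduce the pencil $(I+G)K$ to the selfadjoint operator $K^{1/2}(I+G)K^{1/2}$, split the Hilbert space into $\ker K$ and $(\ker K)^\bot$, and then invoke the classical spectral theorem for compact selfadjoint operators together with an index-type argument for the sign of the eigenvalues. First I would record that $K^{1/2}\in L(Y)$ is well defined, compact, selfadjoint and positive semi definite with $\ker K^{1/2}=\ker K$, and that $\ran K^{1/2}$ is dense in $(\ker K)^\bot$. The operator $T:=K^{1/2}(I+G)K^{1/2}$ is compact and selfadjoint, and it annihilates $\ker K$; hence its spectrum is that of its restriction $T|_{(\ker K)^\bot}$ together with the eigenvalue $0$ coming from the infinite-dimensional kernel (using $\dim(\ker K)^\bot=\infty$, so actually $0\in\sigma_\mathrm{ess}(T)$, and the nonzero spectrum consists of eigenvalues of finite multiplicity accumulating only at $0$). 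This gives the sequence $(\tau_n)_{n\in\setN}$ of nonzero eigenvalues with $\lim_{n\in\setN}\tau_n=0$.

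Next I would show that $\sigma\big((I+G)K\big)\setminus\{0\}=\sigma(T)\setminus\{0\}$. For $\tau\neq0$, if $(I+G)Kx=\tau x$ with $x\neq0$, then necessarily $x\in\ran(I+G)\subset\ran K$-closure considerations force $x\in(\ker K)^\bot$ after applying $K^{1/2}$ appropriately; more cleanly, set $y:=K^{1/2}x$ and check $Ty=K^{1/2}(I+G)K^{1/2}y=K^{1/2}(I+G)Kx=\tau K^{1/2}x=\tau y$, with $y\neq0$ since $\tau x=(I+G)Kx$ and $x\notin\ker K$ (otherwise $\tau x=0$). Conversely, from $Ty=\tau y$, $y\neq0$, one sets $x:=(I+G)K^{1/2}y$ and verifies $(I+G)Kx=\tau x$ with $x\neq0$. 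That $0\in\sigma\big((I+G)K\big)$ as well (indeed $0\in\sigma_\mathrm{ess}$) follows since $K$ is compact with infinite-dimensional kernel and $I+G$ is bijective, so $(I+G)K$ is compact with infinite-dimensional kernel.

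The remaining, and main, point is the \emph{sign} assertion: all but finitely many $\tau_n$ are positive. Here I would use the hypotheses $\ker K=\ker\big(K^{1/2}(I+G)K^{1/2}\big)$ and $P_{(\ker K)^\bot}(I+G)|_{(\ker K)^\bot}$ bijective. Write $G=G_++G_-$ via the spectral decomposition of the compact selfadjoint $G$; since $I+G$ is bijective and $G$ compact, the negative eigenvalues of $I+G$ are finite in number, say the negative spectral subspace of $I+G$ is $N$ with $\dim N=:k<\infty$. For $y\in(\ker K)^\bot$ with $K^{1/2}y$ in the positive spectral subspace of $I+G$, one gets $\langle Ty,y\rangle=\langle(I+G)K^{1/2}y,K^{1/2}y\rangle>0$ unless $K^{1/2}y=0$, i.e. unless $y\in\ker K$. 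More precisely, $\langle Ty,y\rangle\le0$ forces $K^{1/2}y$ to lie in the (finite-dimensional) span of the non-positive eigenspaces of $I+G$, hence $y$ lies in $K^{1/2}$-preimage of that subspace intersected with $(\ker K)^\bot$, a space of dimension at most $k$ (injectivity of $K^{1/2}$ on $(\ker K)^\bot$). By the min–max characterization of eigenvalues of the compact selfadjoint $T$, this bounds the number of non-positive eigenvalues of $T$ by $k<\infty$; together with the injectivity hypothesis $\ker K=\ker T$ (which rules out $0$ being an eigenvalue of $T|_{(\ker K)^\bot}$, so none of the $\tau_n$ vanish), we conclude that at most finitely many $\tau_n$ are negative and the rest are strictly positive. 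The delicate part is making the dimension count rigorous — precisely that $\{y\in(\ker K)^\bot:\langle Ty,y\rangle\le0\}$ is contained in a subspace of dimension $\le k$ — which is where the bijectivity of $P_{(\ker K)^\bot}(I+G)|_{(\ker K)^\bot}$ is needed to ensure that $I+G$ does not degenerate along $\ran K^{1/2}$.
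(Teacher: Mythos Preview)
Your first two steps (equality of nonzero spectra of $(I+G)K$ and $T:=K^{1/2}(I+G)K^{1/2}$, and the spectral theorem applied to $T$) match the paper's Steps~1--2 essentially verbatim.

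For the sign assertion your route genuinely differs from the paper's. The paper argues by a resolvent/perturbation technique: it factors $(\tau I+T)|_{\tilde Y}$, $\tilde Y:=(\ker K)^\bot$, as $(\tau I+K)^{1/2}\,M(\tau)\,(\tau I+K)^{1/2}$ and shows, via compactness of $G_\pm^{1/2}$, that $M(\tau)$ converges in $L(\tilde Y)$ as $\tau\to0^+$ to $P_{\tilde Y}(I+G)|_{\tilde Y}$; the assumed bijectivity of the latter then yields that $(\tau I+T)|_{\tilde Y}$ is bijective for all small $\tau>0$, so $T$ has no eigenvalues in $(-c,0)$ and only finitely many $\le -c$. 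Your idea is instead a Sylvester-type inertia bound: since $\langle Ty,y\rangle=\langle(I+G)K^{1/2}y,K^{1/2}y\rangle$ and $K^{1/2}$ is injective on $(\ker K)^\bot$, the negative index of $T|_{(\ker K)^\bot}$ is at most the negative index of $I+G$, which is finite because $G$ is compact and $I+G$ bijective.

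Your execution, however, contains a misstep. The sentence ``$\langle Ty,y\rangle\le0$ forces $K^{1/2}y$ to lie in the (finite-dimensional) span of the non-positive eigenspaces of $I+G$'' is false for individual vectors, and the set $\{y:\langle Ty,y\rangle\le0\}$ is not a subspace. The correct (and standard) statement is at the level of subspaces: if $V_-\subset(\ker K)^\bot$ is the span of eigenvectors of $T$ with negative eigenvalues, then $K^{1/2}V_-$ is a subspace of the same dimension on which the form of $I+G$ is negative definite, hence $\dim V_-\le k:=\dim(\text{negative spectral subspace of }I+G)<\infty$; together with $\ker T=\ker K$ this finishes. Once stated this way, your argument is actually \emph{simpler} than the paper's and does \emph{not} use the hypothesis that $P_{(\ker K)^\bot}(I+G)|_{(\ker K)^\bot}$ is bijective --- only that $I+G$ is bijective (so $k<\infty$) and that $\ker K=\ker T$. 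Your closing remark invoking that extra bijectivity hypothesis is therefore misplaced; it is precisely what the paper's perturbation proof needs, not your inertia proof.
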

\begin{proof}
\textit{1.\ Step:}\quad If $(\tau,y)\in\setC\setminus\{0\}\times Y\setminus\{0\}$ solves
\begin{align*}
\Big(\tau I-(I+G)K\Big)y=0,
\end{align*}
then $K^{1/2}y\neq0$ and
\begin{align*}
0=K^{1/2}\Big(\tau I-(I+G)K\Big)y=\Big(\tau I-K^{1/2}(I+G)K^{1/2}\Big)K^{1/2}y.
\end{align*}
Vice-versa, if $(\tau,y')\in\setC\setminus\{0\}\times Y\setminus\{0\}$ solves
\begin{align*}
\Big(\tau I-K^{1/2}(I+G)K^{1/2}\Big)y',
\end{align*}
then $(I+G)K^{1/2}y'\neq0$ and
\begin{align*}
0=\Big(I+G)K^{1/2}(\tau I-K^{1/2}(I+G)K^{1/2}\Big)y'=\Big(\tau I-(I+G)K\Big)(I+G)K^{1/2}y'.
\end{align*}
By assumption, $(I+G)Ky=0$ if and only if $K^{1/2}(I+G)K^{1/2}y=0$. Thus the spectra of $(I+G)K$ and
$K^{1/2}(I+G)K^{1/2}$ coincide.\\
\textit{2.\ Step:}\quad Since $K^{1/2}(I+G)K^{1/2}$ is compact and selfadjoint and $Y$ is separable with
$\dim Y\geq \dim (\ker K)^\bot=\infty$ the Spectral Theorem for compact, selfadjoint operators yields:
The spectrum of $K^{1/2}(I+G)K^{1/2}$ consists of the essential spectrum $\{0\}$ and an infinite sequence of eigenvalues
$(\tau_n\in\setR)_{n\in\setN}$ (with multiplicity taken into account),
$\lim_{n\in\setN} \tau_n=0$ and there exists an orthonormal basis $(y_n)_{n\in\setN}$ of corresponding eigenelements.
Due to $\dim (\ker K)^\bot=\infty$ there exists an infinite index set $\setM\subset\setN$ so that $\tau_m\neq0$ for each
$m\in\setM$.\\
\textit{3.\ Step:}\quad It remains to prove that all $(\tau_m)_{m\in\setM}$ apart from a finite set are positive.
To this end we apply a technique which is inspired by \cite[\S 3]{Markus:88}. Let
\begin{align*}
\tilde Y:=\ol{\spn \{y_m\colon m\in\setM\}}^\mathrm{cl}=(\ker K^{1/2}(I+G)K^{1/2})^{\bot}=(\ker K)^\bot
\end{align*}
and denote $P_{\tilde Y}$ the orthogonal projection onto $\tilde Y$.
We note that for each $y\in Y$, $y^0\in\ker K$ it holds
\begin{align*}
\spl K^{1/2}y,y^0\spr_Y = \spl y,K^{1/2}y^0\spr_Y = 0.
\end{align*}
Thus $\ran K^{1/2} \subset (\ker K)^{\bot}=\tilde Y$ and so $(\tau I+K)^{1/2}\tilde Y\subset \tilde Y$.
Let $G=G_+-G_-$ so that $G_+$ and $G_-$ are compact, selfadjoint and positive semi definite, i.e.\ a decomposition of
$G$ in the positive and the negative part. For $\tau>0$ we compute
\begin{align*}
\big(\tau I&+K^{1/2}(I+G)K^{1/2}\big)|_{\tilde Y}\\
&=\big(\tau I + K + K^{1/2}GK^{1/2}\big)|_{\tilde Y}\\
&=(\tau I+K)^{1/2} \Big( I -(P_{\tilde Y}(\tau I+K)|_{\tilde Y})^{-1/2}\\
&\phantom{=}K^{1/2}(G_+^{1/2}G_+^{1/2}-G_-^{1/2}G_-^{1/2})
K^{1/2} (P_{\tilde Y}(\tau I+K)|_{\tilde Y})^{-1/2} \Big) (\tau I+K)^{1/2}|_{\tilde Y}.
\end{align*}
By means of the Spectral Theorem for compact, selfadjoint operators we deduce that
$(P_{\tilde Y}(\tau I+K)|_{\tilde Y})^{-1/2}K^{1/2}$ converges point-wise to $P_{\tilde Y}$ for $\tau\to0+$. Since
$G_\pm^{1/2}$ is compact it follows that $(P_{\tilde Y}(\tau I+K)|_{\tilde Y})^{-1/2}K^{1/2}G_\pm^{1/2}$ converges to
$P_{\tilde Y}G_\pm^{1/2}$ in $L(Y)$ for $\tau\to0+$. Hence
\begin{align*}
\Big( (P_{\tilde Y}(\tau I+K)|_{\tilde Y})^{-1/2}K^{1/2}G_\pm^{1/2} \Big)^*
=G_\pm^{1/2}K^{1/2}(P_{\tilde Y}(\tau I+K)|_{\tilde Y})^{-1/2}P_{\tilde Y}
\end{align*}
converges to  $(P_{\tilde Y}G_\pm^{1/2})^*= G_\pm^{1/2}P_{\tilde Y}$ in $L(Y)$. Thus
\begin{align}\label{eq:proofabstract}
P_{\tilde Y} \Big(
I -(P_{\tilde Y}(\tau I+K)|_{\tilde Y})^{-1/2}K^{1/2} G K^{1/2} (P_{\tilde Y}(\tau I+K)|_{\tilde Y})^{-1/2}
\Big) |_{\tilde Y}
\end{align}
converges in norm to $P_{\tilde Y}(I-G)|_{\tilde Y}$. 
Hence there exists $c>0$ so that \eqref{eq:proofabstract} is bijective for all $\tau\in(0,c)$. Since for each
$\tau\in(0,c)$, $(\tau I+K^{1/2}(I+G)K^{1/2})|_{\tilde Y} \in L(\tilde Y)$ is a composition of three bijective operators
in $L(\tilde Y)$, it is bijective. Due to $\lim_{m\in\setM}\tau_m=0$ there can only exist a finite number of $m\in\setM$
with $\tau_m<0$.
\end{proof}

\begin{lemma}\label{lem:tKVpsd}
Let Assumptions~\ref{ass:eps}, \ref{ass:mu}, \ref{ass:Domain} hold true.
Thence $\tilde K_V(\tilde\lambda)$ is compact, selfadjoint and positive semi definite for each
$\tilde\lambda\in[0,c_\infty^{-1})$.
It holds further $\ker \tilde K_V(\tilde\lambda)=\ker B_{\tr}$ for each $\tilde\lambda\in(0,c_\infty^{-1})$.
\end{lemma}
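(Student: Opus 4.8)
The plan is to realise $\tilde K_V(\tilde\lambda)$ as the Schur complement, with respect to the second component, of a positive semidefinite selfadjoint block operator; compactness, selfadjointness, positive semidefiniteness and the kernel then all follow at once.

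First I would dispose of the preliminaries and of compactness. For $\tilde\lambda\in[0,c_\infty^{-1})$ the operator $P_{W_1}(\omega^2\tilde\lambda A_\epsilon+A_{\tr})|_{W_1}\in L(W_1)$ is bijective: for $\tilde\lambda=0$ by Lemma~\ref{lem:PWAtrisPD}, and for $\tilde\lambda\in(0,c_\infty^{-1})$ because it equals $\tilde\lambda\,P_{W_1}(\omega^2 A_\epsilon+\tilde\lambda^{-1}A_{\tr})|_{W_1}$ with $|\tilde\lambda^{-1}|>c_\infty$; hence $\tilde S_V(\tilde\lambda)\in L(W_1)$ is well defined. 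The members of $V$ satisfy $\diveps u=0$ and $\tr_{\nv\cdot\epsilon}u=0$, so $V\subset H(\curl,\diveps,\tr_{\nv\cdot\epsilon}^0;\Omega)$ with continuous inclusion, and Lemma~\ref{lem:Vtraceregularity} shows that $B_{\tr}|_V$ is compact; by \eqref{eq:AeqBsB} then $A_{\tr}|_V=B_{\tr}^*(B_{\tr}|_V)$ is compact as an operator $V\to X$, and composing it on either side with the bounded operators $P_V$, $P_{W_1}$, $\tilde S_V(\tilde\lambda)$ and $A_{\tr}$ shows that both summands of $\tilde K_V(\tilde\lambda)=P_VA_{\tr}|_V-P_VA_{\tr}\tilde S_V(\tilde\lambda)P_{W_1}A_{\tr}|_V$ are compact.

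For the remaining assertions I would introduce, on $X\times X$, the bounded selfadjoint operator
\begin{align*}
\calN_{\tilde\lambda}&:=\bpm B_{\tr}^*\\B_{\tr}^*\epm\bpm B_{\tr}&B_{\tr}\epm+\bpm 0&0\\0&\omega^2\tilde\lambda A_\epsilon\epm,\\
\spl\calN_{\tilde\lambda}(a,b),(a,b)\spr&=\|\tr_{\nv\times}(a+b)\|_{\boldL^2_t(\partial\Omega)}^2+\omega^2\tilde\lambda\spl\epsilon b,b\spr_{\boldL^2(\Omega)},
\end{align*}
which is positive semidefinite for $\tilde\lambda\geq0$. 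Its compression $\calM_{\tilde\lambda}\in L(V\times W_1)$ is again positive semidefinite and selfadjoint, with diagonal entries $P_VA_{\tr}|_V$ and the invertible $P_{W_1}(\omega^2\tilde\lambda A_\epsilon+A_{\tr})|_{W_1}$ and off-diagonal entries $P_VA_{\tr}|_{W_1}$ and $P_{W_1}A_{\tr}|_V$; its Schur complement with respect to the $(2,2)$-entry is precisely $P_VA_{\tr}|_V-P_VA_{\tr}\tilde S_V(\tilde\lambda)P_{W_1}A_{\tr}|_V=\tilde K_V(\tilde\lambda)$. Since the Schur complement of a positive semidefinite selfadjoint block operator with respect to an invertible block is again positive semidefinite and selfadjoint, $\tilde K_V(\tilde\lambda)$ inherits these two properties; and completing the square gives, for every $v\in V$,
\[
\spl\tilde K_V(\tilde\lambda)v,v\spr_X=\min_{w\in W_1}\Big(\|\tr_{\nv\times}(v+w)\|_{\boldL^2_t(\partial\Omega)}^2+\omega^2\tilde\lambda\spl\epsilon w,w\spr_{\boldL^2(\Omega)}\Big).
\]

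Finally the kernel: since $\tilde K_V(\tilde\lambda)$ is positive semidefinite, $v\in\ker\tilde K_V(\tilde\lambda)$ if and only if $\spl\tilde K_V(\tilde\lambda)v,v\spr_X=0$, which by the displayed minimum means that there is $w\in W_1$ with $\tr_{\nv\times}(v+w)=0$ and $\tilde\lambda\spl\epsilon w,w\spr_{\boldL^2(\Omega)}=0$. For $\tilde\lambda\in(0,c_\infty^{-1})$ the coercivity of $\epsilon$ from Assumption~\ref{ass:eps} forces $w=0$, so the condition reduces to $\tr_{\nv\times}v=0$; conversely $w=0$ witnesses membership as soon as $\tr_{\nv\times}v=0$. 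Hence $\ker\tilde K_V(\tilde\lambda)=V\cap\ker B_{\tr}$, which is the intended reading of ``$\ker\tilde K_V(\tilde\lambda)=\ker B_{\tr}$'' since $\tilde K_V(\tilde\lambda)$ acts on $V$. I do not expect a genuine obstacle; the points requiring care are the bookkeeping with the orthogonal projections $P_V$ and $P_{W_1}$ when identifying the Schur complement, the well-definedness of $\tilde S_V(\tilde\lambda)$ at the endpoint $\tilde\lambda=0$, and the justification of the completion-of-squares identity for the Schur-complement quadratic form.
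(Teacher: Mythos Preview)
Your argument is correct and takes a more structural route than the paper. The paper proves positive semidefiniteness by a bare-hands computation: for $v\in V$ it sets $w_1:=\tilde S_V(\tilde\lambda)P_{W_1}B_{\tr}^*B_{\tr}v$, uses the defining identity $(A_{\tr}+\omega^2\tilde\lambda A_\epsilon)w_1=P_{W_1}B_{\tr}^*B_{\tr}v$ together with Cauchy--Schwarz to obtain $\|B_{\tr}w_1\|\le\|B_{\tr}v\|$, and feeds this into $\spl\tilde K_V(\tilde\lambda)v,v\spr_X=\|B_{\tr}v\|^2-\spl B_{\tr}w_1,B_{\tr}v\spr$. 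For the kernel it then argues by cases on whether $P_{W_1}B_{\tr}^*B_{\tr}v$ vanishes, tracking where the inequalities become strict. Your approach instead recognises $\tilde K_V(\tilde\lambda)$ as the Schur complement of the manifestly positive semidefinite compression $\calM_{\tilde\lambda}$ and appeals to the standard facts that such a Schur complement is selfadjoint, positive semidefinite, and that its quadratic form is the minimum over the eliminated variable. This packages the paper's explicit $w_1$ as the minimiser and replaces the case analysis by the single observation that for $\tilde\lambda>0$ both summands in the minimised expression must vanish, forcing $w=0$ and then $B_{\tr}v=0$. The trade-off is that the paper's argument is entirely self-contained, while yours invokes (elementary but nontrivial) background on Schur complements of positive block operators; conceptually, your version makes the reason for positivity more transparent.
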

\begin{proof}
Let $\tilde\lambda\in[0,c_\infty^{-1})$. $\tilde K_V(\tilde\lambda)$ is compact due Lemma~\ref{lem:Vtraceregularity}.
It follows from the definition of $\tilde K_V(\tilde\lambda)$, that $\tilde K_V(\tilde\lambda)$ is selfadjoint.
Let $v\in V$ and $w_1:=\tilde S_V(\tilde\lambda) P_{W_1} B_{\tr}^* B_{\tr} v$. We compute
\begin{align*}
\spl B_{\tr}w_1,B_{\tr}w_1 \spr_{\boldL_t^2(\partial\Omega)}
&\leq \spl B_{\tr}w_1,B_{\tr}w_1 \spr_{\boldL_t^2(\partial\Omega)}
+\omega^2\tilde\lambda \spl \epsilon w_1,w_1 \spr_{\boldL^2(\Omega)}\\
&=\spl (A_{\tr}+\omega^2\tilde\lambda A_\epsilon)w_1,w_1\spr_X \\
&=\spl (A_{\tr}+\omega^2\tilde\lambda A_\epsilon)\tilde S_V(\tilde\lambda) P_{W_1} B_{\tr}^* B_{\tr}v,w_1\spr_X \\
&=\spl B_{\tr}v,B_{\tr}w_1 \spr_{\boldL_t^2(\partial\Omega)}\\
&\leq \|B_{\tr}v\|_{\boldL_t^2(\partial\Omega)}
\|B_{\tr}w_1\|_{\boldL_t^2(\partial\Omega)}
\end{align*}
and hence $\|B_{\tr}\tilde S_V(\tilde\lambda) P_{W_1} B_{\tr}^* B_{\tr} v\|_{\boldL_t^2(\partial\Omega)}
=\|B_{\tr}w_1\|_{\boldL_t^2(\partial\Omega)}
\leq \|B_{\tr}v\|_{\boldL_t^2(\partial\Omega)}$. Thus
\begin{align*}
\spl B_{\tr} \tilde S_V(\tilde\lambda) P_{W_1} B_{\tr}^* B_{\tr} v,
B_{\tr}v \spr_{\boldL^2_t(\partial\Omega)}
&\leq \|B_{\tr} \tilde S_V(\tilde\lambda) P_{W_1} B_{\tr}^* B_{\tr} v\|_{\boldL^2_t(\partial\Omega)}
\|B_{\tr}v \|_{\boldL^2_t(\partial\Omega)}\\
&\leq \|B_{\tr}v\|_{\boldL_t^2(\partial\Omega)}
\|B_{\tr}v \|_{\boldL^2_t(\partial\Omega)}.
\end{align*}
Hence
\begin{align}\label{eq:tKpsd}
\begin{aligned}
\spl \tilde K_V(\tilde\lambda)v,v\spr_X
&= \spl (A_{\tr}-A_{\tr} \tilde S_V(\tilde\lambda) P_{W_1} A_{\tr}v,v\spr_X\\
&= \spl B_{\tr}v,B_{\tr}v \spr_{\boldL^2_t(\partial\Omega)}
-\spl B_{\tr} \tilde S_V(\tilde\lambda) P_{W_1} B^*_{\tr} B_{\tr}v, B_{\tr}v \spr_{\boldL^2_t(\partial\Omega)}\\
&\geq0.
\end{aligned}
\end{align}
Let $\tilde\lambda\in(0,c_\infty^{-1})$.
Let $B_{\tr}v\neq0$. If $P_{W_1}B_{\tr}^*B_{\tr}v=0$ it follows $\tilde S_V(\tilde\lambda)P_{W_1}B_{\tr}^*B_{\tr}v=0$
and \eqref{eq:tKpsd} is strict. So let $P_{W_1}B_{\tr}^*B_{\tr}v\neq0$. It follows $w_1\neq0$ and hence
$\spl \epsilon w_1,w_1 \spr_{\boldL^2(\Omega)}>0$. Since $w_1\in W_1$, it also holds
$\|B_{\tr}w_1\|_{\boldL^2_t(\partial\Omega)}\neq0$. So in this case
$\|B_{\tr}\tilde S_V(\tilde\lambda) P_{W_1} B_{\tr}^* B_{\tr} v\|_{\boldL_t^2(\partial\Omega)}$
$<$ $\|B_{\tr}v\|_{\boldL_t^2(\partial\Omega)}$ and \eqref{eq:tKpsd} is strict too. Thus $\tilde K_V(\tilde\lambda)v\neq0$.
On the other hand: If $B_{\tr}v=0$, then also $\tilde K_V(\tilde\lambda)v=0$ due to the definition of
$\tilde K_V(\tilde\lambda)$. Thus $\ker \tilde K_V(\tilde\lambda)=\ker B_{\tr}$ for each
$\tilde\lambda\in(0,c_\infty^{-1})$.
\end{proof}

\begin{lemma}\label{lem:tKVzero}
Let Assumptions~\ref{ass:eps}, \ref{ass:mu}, \ref{ass:Domain} hold true. Thence
\begin{align*}
\tilde K_V(0)=B_{\tr}^* P_{\nabla_\partial} B_{\tr}.
\end{align*}
\end{lemma}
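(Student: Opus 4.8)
The plan is to substitute $\tilde\lambda=0$, to recognise the correction term of $\tilde K_V(0)$ as an $\boldL^2_t(\partial\Omega)$-orthogonal projection, and to identify its range via the surface Helmholtz decomposition. At $\tilde\lambda=0$ one has $\tilde S_V(0)=(P_{W_1}A_{\tr}|_{W_1})^{-1}$, which is well defined because $P_{W_1}A_{\tr}|_{W_1}$ is strictly positive definite by Lemma~\ref{lem:PWAtrisPD}; hence $\tilde K_V(0)=P_V\big(A_{\tr}-A_{\tr}(P_{W_1}A_{\tr}|_{W_1})^{-1}P_{W_1}A_{\tr}\big)|_V$. I would fix $v\in V$, set $w_1:=(P_{W_1}A_{\tr}|_{W_1})^{-1}P_{W_1}A_{\tr}v\in W_1$, and test the identity $(P_{W_1}A_{\tr}|_{W_1})w_1=P_{W_1}A_{\tr}v$ against an arbitrary $w_1'\in W_1$; using $A_{\tr}=B_{\tr}^*B_{\tr}$ from \eqref{eq:AeqBsB} and the $X$-orthogonality of $P_{W_1}$, this yields
\[
\spl B_{\tr}w_1,B_{\tr}w_1'\spr_{\boldL^2_t(\partial\Omega)}=\spl B_{\tr}v,B_{\tr}w_1'\spr_{\boldL^2_t(\partial\Omega)}\qquad\text{for all }w_1'\in W_1,
\]
so $B_{\tr}w_1$ is the $\boldL^2_t(\partial\Omega)$-orthogonal projection of $B_{\tr}v$ onto $\ol{\tr_{\nv\times}W_1}$. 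Since $P_V$ is invisible under $\spl\cdot,v'\spr_X$ for $v'\in V$, we obtain $\spl\tilde K_V(0)v,v'\spr_X=\spl B_{\tr}v-B_{\tr}w_1,B_{\tr}v'\spr_{\boldL^2_t(\partial\Omega)}$, and the lemma reduces to showing $B_{\tr}v-B_{\tr}w_1=P_{\nabla_\partial}B_{\tr}v$.

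Next I would prove $\tr_{\nv\times}W_1=\curl_\partial H^1(\partial\Omega)$. For the inclusion ``$\subseteq$'': by \eqref{eq:W1char} every $w_1\in W_1$ equals $\nabla u$ for some $u\in H^1(\Omega)$, and the tangential trace of a gradient is $\tr_{\nv\times}\nabla u=\nv\times\nabla_\partial(\tr u)=\curl_\partial(\tr u)\in\curl_\partial H^1(\partial\Omega)$. For the inclusion ``$\supseteq$'': given $g\in H^1(\partial\Omega)$, pick $u\in H^1(\Omega)$ with $\tr u=g$; then $\nabla u\in H(\curl^0;\Omega)$ with $\tr_{\nv\times}\nabla u=\curl_\partial g\in\boldL^2_t(\partial\Omega)$, so $\nabla u\in X$ and we may decompose $\nabla u=P_V\nabla u+P_{W_1}\nabla u+P_{W_2}\nabla u$ by Theorem~\ref{thm:VW}. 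By \eqref{eq:W1char}, \eqref{eq:W2char} and the definition of $K_N(\Omega)$, both $P_{W_1}\nabla u$ and $P_{W_2}\nabla u$ are gradients, hence so is $P_V\nabla u$; but a gradient lying in $V$ has vanishing $\epsilon$-divergence and vanishing $\epsilon$-normal trace, so an integration by parts together with the ellipticity of $\epsilon$ forces $P_V\nabla u=0$. As $W_2\subset\ker\tr_{\nv\times}$, this gives $\tr_{\nv\times}P_{W_1}\nabla u=\tr_{\nv\times}\nabla u=\curl_\partial g$ with $P_{W_1}\nabla u\in W_1$, which proves the inclusion (and incidentally makes the closure in the first step superfluous).

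Finally, by the surface Helmholtz decomposition $\boldL^2_t(\partial\Omega)=\nabla_\partial H^1(\partial\Omega)\oplus^\bot\curl_\partial H^1(\partial\Omega)$ of \cite[Theorem~5.5]{BuffaCostabelSheen:02}, the space $\curl_\partial H^1(\partial\Omega)$ is the $\boldL^2_t(\partial\Omega)$-orthogonal complement of $\nabla_\partial H^1(\partial\Omega)$, so the orthogonal projection onto it equals $I-P_{\nabla_\partial}$. Hence $B_{\tr}w_1=(I-P_{\nabla_\partial})B_{\tr}v$, thus $B_{\tr}v-B_{\tr}w_1=P_{\nabla_\partial}B_{\tr}v$, and plugging this into the first step gives $\spl\tilde K_V(0)v,v'\spr_X=\spl P_{\nabla_\partial}B_{\tr}v,B_{\tr}v'\spr_{\boldL^2_t(\partial\Omega)}=\spl B_{\tr}^*P_{\nabla_\partial}B_{\tr}v,v'\spr_X$ for all $v,v'\in V$. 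One also checks that $B_{\tr}^*P_{\nabla_\partial}B_{\tr}v\in V$: indeed $P_{\nabla_\partial}B_{\tr}v\in\nabla_\partial H^1(\partial\Omega)$ is $\boldL^2_t(\partial\Omega)$-orthogonal to $\curl_\partial H^1(\partial\Omega)=\tr_{\nv\times}W_1$ and to $\tr_{\nv\times}W_2=\{0\}$, so $B_{\tr}^*P_{\nabla_\partial}B_{\tr}v$ is $X$-orthogonal to $W_1\oplus W_2$, i.e.\ lies in $V$; hence the projector $P_V$ in front of $\tilde K_V(0)$ is redundant and $\tilde K_V(0)=B_{\tr}^*P_{\nabla_\partial}B_{\tr}$.

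I expect the one genuinely substantive point to be the inclusion ``$\supseteq$'' of the second step, i.e.\ that $\tr_{\nv\times}|_{W_1}$ already covers all of $\curl_\partial H^1(\partial\Omega)$; the clean route around it is to realise an arbitrary prescribed surface (rotated) gradient as the tangential trace of \emph{any} $H^1$-gradient and then to argue that the $V$-component of that gradient vanishes, being simultaneously a gradient and an element of $V$. The remaining ingredients — the Schur-complement algebra at $\tilde\lambda=0$, the interpretation of $w_1$ as an $\boldL^2_t(\partial\Omega)$-orthogonal projection, and the identity $\curl_\partial H^1(\partial\Omega)=(\nabla_\partial H^1(\partial\Omega))^{\bot}$ — are routine.
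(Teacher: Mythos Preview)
Your proof is correct and follows essentially the same route as the paper: recognise $\tilde K_V(0)$ as $B_{\tr}^*(I-P)B_{\tr}$ with $P$ the $\boldL^2_t(\partial\Omega)$-orthogonal projection onto the closure of $\tr_{\nv\times}W_1$, then identify $\tr_{\nv\times}W_1=\curl_\partial H^1(\partial\Omega)$ and invoke the surface Helmholtz decomposition. The only difference is in the surjectivity step: the paper realises a prescribed $\curl_\partial\psi$ by solving the Dirichlet problem $\diveps\nabla\tilde w=0$, $\tr\tilde w=\psi$, so that $\nabla\tilde w$ lands directly in $W_1\oplus W_2$ and one simply subtracts $P_{W_2}\nabla\tilde w$; you instead take an arbitrary $H^1$-extension, decompose via Theorem~\ref{thm:VW}, and kill the $V$-component by the observation that a gradient in $V$ must vanish. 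Both arguments are short and valid; the paper's is marginally more direct, while yours avoids invoking solvability of the $\epsilon$-Dirichlet problem and makes explicit the extra bookkeeping (e.g.\ that $B_{\tr}^*P_{\nabla_\partial}B_{\tr}v\in V$) that the paper leaves implicit.
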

\begin{proof}
Let $P\in L\big(\boldL^2_t(\partial\Omega)\big)$ be the $\boldL^2_t(\partial\Omega)$-orthogonal
projection onto the closure of $\ran B_{\tr}|_{W_1}$. It follows from the definition of $\tilde K_V(0)$ that
$\tilde K_V(0)=B_{\tr}^* (I-P) B_{\tr}$. The claim is proven, if we show that $\ran B_{\tr}|_{W_1}=
\curl_\partial H^1(\partial\Omega)$. It follows from the definition of $W_1$ that $\ran B_{\tr}|_{W_1}\subset
\curl_\partial H^1(\partial\Omega)$. Let $\phi\in\curl_\partial H^1(\partial\Omega)$ and $\psi\in H^1(\partial\Omega)$
so that $\phi=\curl_\partial\psi=\nu\times\nabla_\partial\psi$. Let $\tilde w\in H^1(\Omega)$ solve
$\diveps\nabla\tilde w=0$ in $\Omega$ and $\tr\tilde w=\psi$ at $\partial\Omega$. With \eqref{eq:W2char} it follow
$\nabla\tilde w-P_{W_2}\nabla\tilde w=:w\in W_1$ and $B_{\tr}w=\phi$. Thus $\ran B_{\tr}|_{W_1}=
\curl_\partial H^1(\partial\Omega)$ and
\begin{align*}
\tilde K_V(0)=B_{\tr}^* (I-P) B_{\tr}=B_{\tr}^* (I-P_{\nabla_\partial^\bot}) B_{\tr}=B_{\tr}^* P_{\nabla_\partial} B_{\tr}.
\end{align*}
\end{proof}

\begin{lemma}\label{lem:dimVfkerPB}
Let Assumptions~\ref{ass:eps}, \ref{ass:mu}, \ref{ass:Domain} hold true.
Thence
\begin{align*}
\dim (\ker B_{\tr}|_V)^{\bot_V} = \dim (\ker P_{\nabla_\partial}B_{\tr}|_V)^{\bot_V}=\infty.
\end{align*}
\end{lemma}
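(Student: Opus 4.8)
The plan is to reduce both equalities to a statement about the range of $P_{\nabla_\partial}B_{\tr}|_V$. Since $B_{\tr}|_V$ restricted to $(\ker B_{\tr}|_V)^{\bot_V}$ is injective, $\dim(\ker B_{\tr}|_V)^{\bot_V}=\dim\ran(B_{\tr}|_V)$, and likewise $\dim(\ker P_{\nabla_\partial}B_{\tr}|_V)^{\bot_V}=\dim\ran(P_{\nabla_\partial}B_{\tr}|_V)$. As $\ker B_{\tr}|_V\subseteq\ker P_{\nabla_\partial}B_{\tr}|_V$ we have $(\ker P_{\nabla_\partial}B_{\tr}|_V)^{\bot_V}\subseteq(\ker B_{\tr}|_V)^{\bot_V}$, so both dimensions are infinite as soon as $\ran(P_{\nabla_\partial}B_{\tr}|_V)$ is infinite-dimensional, which is what I would prove.

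By Lemma~\ref{lem:Vtraceregularity}, $V=H(\curl,\diveps^0,\tr_{\nv\cdot\epsilon}^0;\Omega)$. First I would construct, for an arbitrary $g\in H^{-1/2}(\partial\Omega)$ with $\spl g,1\spr_{L^2(\Gamma)}=0$ for every connected component $\Gamma$ of $\partial\Omega$, a field $u\in V$ with $\tr_{\nv\cdot}\curl u=g$. To this end let $q\in H^1(\Omega)$ solve $-\Delta q=0$ in $\Omega$, $\nv\cdot\nabla q=g$ at $\partial\Omega$, and put $w:=\nabla q\in H(\div^0;\Omega)$, so that $\tr_{\nv\cdot}w=g$ and $w$ has vanishing flux through each connected component of $\partial\Omega$. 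Since $w$ is divergence free with these flux conditions, the vector potential theorem for bounded Lipschitz domains of Amrouche, Bernardi, Dauge and Girault~\cite{AmroucheBernardiDaugeGirault:98} provides $\tilde u\in\boldH^1(\Omega)$ with $\curl\tilde u=w$. Let $p\in H^1_*(\Omega)$ be the Lax--Milgram solution of $\spl\epsilon\nabla p,\nabla\phi\spr_{\boldL^2(\Omega)}=\spl\epsilon\tilde u,\nabla\phi\spr_{\boldL^2(\Omega)}$ for all $\phi\in H^1(\Omega)$ (well posed because $\epsilon$ is coercive) and set $u:=\tilde u-\nabla p$. Then $\curl u=w$, while the variational identity gives $\diveps u=0$ in $\Omega$ and $\tr_{\nv\cdot\epsilon}u=0$ at $\partial\Omega$; hence $u\in V$ and $\tr_{\nv\cdot}\curl u=g$.

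Next I would extract $P_{\nabla_\partial}\tr_{\nv\times}u$. Using the standard identity $\div_\partial\tr_{\nv\times}v=-\tr_{\nv\cdot}\curl v$ for $v\in H(\curl;\Omega)$ (integration by parts for smooth $v$, then density) together with definition~\eqref{eq:DefS}, we have $P_{\nabla_\partial}\tr_{\nv\times}u=Su=\nabla_\partial z$ with $z\in H^1_*(\partial\Omega)$ solving
\begin{align*}
\spl\nabla_\partial z,\nabla_\partial z'\spr_{\boldL^2_t(\partial\Omega)}=\spl g,z'\spr_{H^{-1/2}(\partial\Omega)\times H^{1/2}(\partial\Omega)}\qquad\text{for all }z'\in H^1_*(\partial\Omega).
\end{align*}
The linear map $g\mapsto\nabla_\partial z$ from $\{g\in H^{-1/2}(\partial\Omega)\colon\spl g,1\spr_{L^2(\Gamma)}=0\text{ for each }\Gamma\}$ to $\boldL^2_t(\partial\Omega)$ is injective: if $\nabla_\partial z=0$ then $\spl g,z'\spr=0$ for all $z'\in H^1_*(\partial\Omega)$, and since moreover $\spl g,1\spr_{L^2(\partial\Omega)}=\sum_\Gamma\spl g,1\spr_{L^2(\Gamma)}=0$ it follows that $\spl g,z'\spr=0$ for all $z'\in H^1(\partial\Omega)$, hence $g=0$ by density of $H^1(\partial\Omega)$ in $H^{1/2}(\partial\Omega)$. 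The domain of this map is infinite-dimensional (finite codimension in $H^{-1/2}(\partial\Omega)$), so its image is infinite-dimensional; since that image is contained in $\ran(P_{\nabla_\partial}B_{\tr}|_V)$, the claim follows. The part I expect to be most delicate is the vector potential step: invoking \cite{AmroucheBernardiDaugeGirault:98} in the form valid for a bounded Lipschitz domain of possibly nontrivial topology, matching the flux conditions on all connected components of $\partial\Omega$ to its hypotheses, and confirming that the $\epsilon$-weighted gradient correction indeed keeps the field in $V$.
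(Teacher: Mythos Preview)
Your argument is correct. The reduction to showing $\ran(P_{\nabla_\partial}B_{\tr}|_V)$ is infinite-dimensional is sound, the construction of $u\in V$ with prescribed $\tr_{\nv\cdot}\curl u=g$ via Neumann problem plus vector potential plus $\epsilon$-weighted gradient correction is valid (the flux conditions you impose on $g$ are exactly those needed in \cite{AmroucheBernardiDaugeGirault:98}, and Lemma~\ref{lem:Vtraceregularity} indeed guarantees $\tr_{\nv\times}u\in\boldL^2_t(\partial\Omega)$ so that $u\in V$), and the injectivity of $g\mapsto\nabla_\partial z$ is cleanly argued.

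The paper, however, takes a considerably shorter route that exploits the decomposition $X=V\oplus W_1\oplus W_2$ from Theorem~\ref{thm:VW}. It picks an orthonormal basis $(f_n)$ of $\nabla_\partial H^1(\partial\Omega)$, lifts each $f_n$ to some $u_n\in X$ with $\tr_{\nv\times}u_n=f_n$ (trace surjectivity), and observes that $P_{\nabla_\partial}B_{\tr}P_Vu_n=f_n$ because $P_{W_1}u_n$ and $P_{W_2}u_n$ are gradients, whose tangential traces lie in $\curl_\partial H^1(\partial\Omega)$ and are therefore annihilated by $P_{\nabla_\partial}$. This immediately gives linearly independent cosets in $V/\ker(P_{\nabla_\partial}B_{\tr}|_V)$. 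What the paper buys is brevity: no PDEs are solved, and the already-built projection $P_V$ does all the work of landing in $V$. What your approach buys is self-containment: it does not rely on the structure theorem for $X$, and it makes the connection between $P_{\nabla_\partial}B_{\tr}$ and the surface Laplacian inverse $(-\Delta_\partial)^{-1}$ explicit, which is informative in its own right.
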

\begin{proof}
Let $(f_n)_{n\in\setN}$ be an orthonormal basis of $\nabla_\partial H^1(\partial\Omega)\subset\boldL^2_t(\partial\Omega)$.
Let $u_n\in H(\curl;\Omega)$ be so that $\tr_{\nu\times}u_n=f_n$. Hence $u_n\in X$. It follows
\begin{align*}
P_{\nabla_\partial} B_{\tr} (P_Vu_n+\ker P_{\nabla_\partial}B_{\tr}|_V)=f_n.
\end{align*}
Thus if $\sum_{n=1}^N c_n(P_Vu_n+\ker P_{\nabla_\partial}B_{\tr}|_V)$ would be a non-trivial linear combination of zero in
$V/(\ker P_{\nabla_\partial}B_{\tr}|_V)$, then $\sum_{n=1}^N c_n f_n$ would be a non-trivial linear combination of zero
in $\nabla_\partial H^1(\partial\Omega)$. Hence $\dim V/(\ker P_{\nabla_\partial}B_{\tr}|_V)=+\infty$.
Since $\ker B_{\tr}\subset\ker P_{\nabla_\partial}B_{\tr}$ it follows
$\dim V/(\ker B_{\tr}|_V) \geq \dim V/(\ker P_{\nabla_\partial}B_{\tr}|_V)$ and thus the dimension of
$\dim V/(\ker B_{\tr}|_V$ is infinite too. The claim follows from $\dim V/Z=\dim Z^{\bot_V}$ for any closed subspace
$Z\subset V$.
\end{proof}

We require the following additional assumption for Lemma~\ref{lem:equalkernelsNZ}.
\begin{assumption}[$\omega^2$ is no ``Dirichlet'' eigenvalue]\label{ass:NoDirichlet}
Let
\begin{align*}
Z_1:=\{z\in V\colon B_{\tr}z=0\}=H(\curl,\diveps^0,\tr_{\nv\times}^0,\tr_{\nv\cdot\epsilon}^0;\Omega)
\end{align*}
and denote $P_{Z_1}$ the $X$-orthogonal projection onto $Z_1$. The operator
\begin{align*}
P_{Z_1} A_c |_{Z_1}  -\omega^2 P_{Z_1} A_\epsilon |_{Z_1} \in L(Z_1)
\end{align*}
is bijective.
\end{assumption}

\begin{lemma}\label{lem:equalkernelsNZ}
Let Assumptions~\ref{ass:eps}, \ref{ass:mu}, \ref{ass:Domain}, \ref{ass:NoNeumann} and \ref{ass:NoDirichlet} hold true.
Let $\tilde\lambda\in(0,c_\infty^{-1})$. Thence
\begin{align*}
\ker \Big(\tilde K_V(\tilde\lambda)^{1/2} \big(P_V (A_c -\omega^2 A_\epsilon) |_V\big)^{-1}
\tilde K_V(\tilde\lambda)^{1/2}\Big) = \ker \tilde K_V(\tilde\lambda).
\end{align*}
\end{lemma}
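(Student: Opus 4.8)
The plan is to abbreviate $T:=P_V(A_c-\omega^2 A_\epsilon)|_V\in L(V)$ and $K:=\tilde K_V(\tilde\lambda)\in L(V)$. By Assumption~\ref{ass:NoNeumann} the operator $T$ is selfadjoint and bijective, and by Lemma~\ref{lem:tKVpsd} (here one uses $\tilde\lambda\in(0,c_\infty^{-1})$) the operator $K$ is compact, selfadjoint, positive semi definite with $\ker K=\ker B_{\tr}|_V=Z_1$. The first thing I would record is that, $K$ being positive semi definite, $\ker K^{1/2}=\ker K=Z_1$ and hence $\overline{\ran K^{1/2}}=(\ker K^{1/2})^{\bot_V}=Z_1^{\bot_V}$; in particular $\ran K^{1/2}\subseteq Z_1^{\bot_V}$. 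The inclusion ``$\ker\tilde K_V(\tilde\lambda)\subseteq\ker(K^{1/2}T^{-1}K^{1/2})$'' is then immediate, since $v\in\ker K$ forces $K^{1/2}v=0$ and therefore $K^{1/2}T^{-1}K^{1/2}v=0$.

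For the reverse inclusion I would take $v$ with $K^{1/2}T^{-1}K^{1/2}v=0$, set $y:=K^{1/2}v$, and argue as follows. From $K^{1/2}(T^{-1}y)=0$ one gets $z:=T^{-1}y\in\ker K^{1/2}=Z_1$, so $y=Tz$ with $z\in Z_1$. On the other hand $y\in\ran K^{1/2}\subseteq Z_1^{\bot_V}$, so $P_{Z_1}y=0$. Using that $Z_1$ is a closed subspace of $V$, so that $P_{Z_1}P_V=P_{Z_1}$ and $P_Vz=z$, this rewrites as $0=P_{Z_1}Tz=(P_{Z_1}A_c|_{Z_1}-\omega^2 P_{Z_1}A_\epsilon|_{Z_1})z$. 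Assumption~\ref{ass:NoDirichlet} says precisely that this operator on $Z_1$ is bijective, hence injective, so $z=0$ and thus $y=Tz=0$, i.e.\ $K^{1/2}v=0$ and $v\in\ker K^{1/2}=\ker K$. Combining the two inclusions gives $\ker(K^{1/2}T^{-1}K^{1/2})=\ker K=\ker\tilde K_V(\tilde\lambda)$, which is the claim.

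I expect the only real obstacle to be the routine but slightly fussy projection bookkeeping: one must verify $P_{Z_1}P_V=P_{Z_1}$ (valid because $Z_1\subseteq V$) so that $P_{Z_1}Tz$ collapses exactly to the operator $P_{Z_1}A_c|_{Z_1}-\omega^2 P_{Z_1}A_\epsilon|_{Z_1}$ appearing in Assumption~\ref{ass:NoDirichlet}. Conceptually there is nothing more to it: the content of the lemma is just the equivalence of Assumption~\ref{ass:NoDirichlet} with $T(Z_1)\cap Z_1^{\bot_V}=\{0\}$, read through the two identities $\ran K^{1/2}\subseteq Z_1^{\bot_V}$ and $\ker K^{1/2}=Z_1$.
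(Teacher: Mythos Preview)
Your proof is correct and follows essentially the same route as the paper: set $z:=T^{-1}K^{1/2}v$, observe $z\in\ker K^{1/2}=Z_1$ from $K^{1/2}z=0$, use $K^{1/2}v\in\ran K^{1/2}\subseteq Z_1^{\bot_V}$ to obtain $P_{Z_1}Tz=0$, and invoke Assumption~\ref{ass:NoDirichlet} to conclude $z=0$. Your write-up is in fact more explicit than the paper's, which compresses the step $P_{Z_1}Tz=0$ into the phrase ``due to the definitions of $z$ and $Z_1$'' and omits the trivial inclusion entirely.
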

\begin{proof}
Let $v\in \ker \Big(\tilde K_V(\tilde\lambda)^{1/2} \big(P_V (A_c -\omega^2 A_\epsilon) |_V\big)^{-1}
\tilde K_V(\tilde\lambda)^{1/2}\Big)$ and
\begin{align*}
z:=\big(P_V (A_c -\omega^2 A_\epsilon) |_V\big)^{-1} \tilde K_V(\tilde\lambda)^{1/2}\Big)v. 
\end{align*}
It follows $B_{\tr}z=0$ due to $\ker K_V(\tilde\lambda)^{1/2} = \ker K_V(\tilde\lambda)$ and Lemma~\ref{lem:tKVpsd}.
Due to the definitions of $z$ and $Z_1$, $z\in Z_1$ solves
\begin{align*}
(P_{Z_1} A_c  -\omega^2 P_{Z_1} A_\epsilon)z=0.
\end{align*}
It follows from Assumption~\ref{ass:NoDirichlet} that $z=0$. Thus $v\in \ker K_V(\tilde\lambda)^{1/2}=
\ker K_V(\tilde\lambda)$.
\end{proof}

We require the following additional assumption for Lemma~\ref{lem:equalkernelsZ}.
\begin{assumption}[$\omega^2$ is no ``hybrid'' eigenvalue]\label{ass:NoHybrid}
Let
\begin{align*}
Z_2:=\{z\in V\colon P_{\nabla_\partial} B_{\tr}z=0\}
\end{align*}
and denote $P_{Z_2}$ the $X$-orthogonal projection onto $Z_2$. The operator
\begin{align*}
P_{Z_2} A_c |_{Z_2}  -\omega^2 P_{Z_2} A_\epsilon |_{Z_2} \in L(Z_2)
\end{align*}
is bijective.
\end{assumption}

\begin{lemma}\label{lem:equalkernelsZ}
Let Assumptions~\ref{ass:eps}, \ref{ass:mu}, \ref{ass:Domain}, \ref{ass:NoNeumann} and \ref{ass:NoHybrid} hold true.
Thence
\begin{align*}
\ker \Big(\tilde K_V(0)^{1/2} \big(P_V (A_c -\omega^2 A_\epsilon) |_V\big)^{-1} \tilde K_V(0)^{1/2}\Big)
= \ker \tilde K_V(0).
\end{align*}
\end{lemma}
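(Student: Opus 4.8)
The plan is to mirror the proof of Lemma~\ref{lem:equalkernelsNZ}, now with $\tilde\lambda=0$, with the subspace $Z_2$ in place of $Z_1$, and with Assumption~\ref{ass:NoHybrid} in place of Assumption~\ref{ass:NoDirichlet}. By Lemma~\ref{lem:tKVpsd} applied at $\tilde\lambda=0\in[0,c_\infty^{-1})$, the operator $\tilde K_V(0)\in L(V)$ is compact, selfadjoint and positive semi definite, so $\tilde K_V(0)^{1/2}$ is well defined and selfadjoint, with $\ker\tilde K_V(0)^{1/2}=\ker\tilde K_V(0)$ and $\overline{\ran\tilde K_V(0)^{1/2}}=\big(\ker\tilde K_V(0)\big)^{\bot_V}$. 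The inclusion ``$\supseteq$'' is then immediate, since $\tilde K_V(0)^{1/2}v=0$ already implies that $v$ lies in the kernel of $\tilde K_V(0)^{1/2}\big(P_V(A_c-\omega^2A_\epsilon)|_V\big)^{-1}\tilde K_V(0)^{1/2}$.

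The first substantive step is to identify $\ker\tilde K_V(0)=Z_2$. By Lemma~\ref{lem:tKVzero} we have $\tilde K_V(0)=B_{\tr}^*P_{\nabla_\partial}B_{\tr}$, hence for every $v\in V$
\[
\spl\tilde K_V(0)v,v\spr_X=\spl P_{\nabla_\partial}B_{\tr}v,B_{\tr}v\spr_{\boldL^2_t(\partial\Omega)}=\|P_{\nabla_\partial}B_{\tr}v\|^2_{\boldL^2_t(\partial\Omega)},
\]
because $P_{\nabla_\partial}$ is an orthogonal projection. Since $\tilde K_V(0)$ is positive semi definite, this yields $\tilde K_V(0)v=0$ if and only if $P_{\nabla_\partial}B_{\tr}v=0$, i.e.\ if and only if $v\in Z_2$.

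For the inclusion ``$\subseteq$'' I would take $v$ in the kernel of $\tilde K_V(0)^{1/2}\big(P_V(A_c-\omega^2A_\epsilon)|_V\big)^{-1}\tilde K_V(0)^{1/2}$, the middle factor being invertible by Assumption~\ref{ass:NoNeumann}, and set
\[
z:=\big(P_V(A_c-\omega^2A_\epsilon)|_V\big)^{-1}\tilde K_V(0)^{1/2}v\in V.
\]
Applying $\tilde K_V(0)^{1/2}$ gives $\tilde K_V(0)^{1/2}z=0$, so $z\in\ker\tilde K_V(0)^{1/2}=\ker\tilde K_V(0)=Z_2$. On the other hand $\tilde K_V(0)^{1/2}v\in\overline{\ran\tilde K_V(0)^{1/2}}=Z_2^{\bot_V}$, so applying $P_{Z_2}$ to the identity $\big(P_V(A_c-\omega^2A_\epsilon)|_V\big)z=\tilde K_V(0)^{1/2}v$ and using $P_{Z_2}P_V=P_{Z_2}$ shows that $z\in Z_2$ solves $\big(P_{Z_2}A_c|_{Z_2}-\omega^2P_{Z_2}A_\epsilon|_{Z_2}\big)z=0$. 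Assumption~\ref{ass:NoHybrid} forces $z=0$, whence $\tilde K_V(0)^{1/2}v=\big(P_V(A_c-\omega^2A_\epsilon)|_V\big)z=0$ and $v\in\ker\tilde K_V(0)$.

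Once the earlier lemmata are available the argument is essentially mechanical; the only genuine content is the identification $\ker\tilde K_V(0)=Z_2$ via Lemma~\ref{lem:tKVzero}, which is also the point that distinguishes this lemma from Lemma~\ref{lem:equalkernelsNZ}. Indeed, at $\tilde\lambda=0$ the kernel of $\tilde K_V$ jumps from $Z_1$ up to the (in general larger) space $Z_2$, which is exactly why the relevant hypothesis here is Assumption~\ref{ass:NoHybrid} rather than Assumption~\ref{ass:NoDirichlet}; beyond this bookkeeping I do not expect any real obstacle.
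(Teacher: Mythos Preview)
Your proof is correct and follows essentially the same route as the paper's: define $z=\big(P_V(A_c-\omega^2A_\epsilon)|_V\big)^{-1}\tilde K_V(0)^{1/2}v$, use Lemma~\ref{lem:tKVzero} to see $z\in Z_2$, apply Assumption~\ref{ass:NoHybrid} to conclude $z=0$, and deduce $v\in\ker\tilde K_V(0)$. You are in fact more explicit than the paper in two places: you spell out the identification $\ker\tilde K_V(0)=Z_2$, and you justify why the right-hand side $\tilde K_V(0)^{1/2}v$ vanishes under $P_{Z_2}$ (via $\ran\tilde K_V(0)^{1/2}\subset Z_2^{\bot_V}$), which the paper leaves implicit.
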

\begin{proof}
Let $v\in \ker \Big(\tilde K_V(0)^{1/2} \big(P_V (A_c -\omega^2 A_\epsilon) |_V\big)^{-1} \tilde K_V(0)^{1/2}\Big)$ and
\begin{align*}
z:=\big(P_V (A_c -\omega^2 A_\epsilon) |_V\big)^{-1} \tilde K_V(0)^{1/2}\Big)v. 
\end{align*}
It follows $P_{\nabla_\partial} B_{\tr}z=0$ due to $\ker K_V(0)^{1/2} = \ker K_V(0)$ and Lemma~\ref{lem:tKVzero}.
Due to the definitions of $z$ and $Z_2$, $z\in Z_2$ solves
\begin{align*}
(P_{Z_2} A_c  -\omega^2 P_{Z_2} A_\epsilon)z=0.
\end{align*}
It follows from Assumption~\ref{ass:NoHybrid} that $z=0$. Thus $v\in \ker K_V(0)^{1/2}=\ker K_V(0)$.
\end{proof}

We require the following additional assumption for Lemma~\ref{lem:existencetaunV}.
\begin{assumption}[$\omega^2$ is no ``projected'' eigenvalue]\label{ass:NoReduced}
The operators
\begin{align*}
P_{Z_1} \big(P_V (A_c -\omega^2 A_\epsilon) |_V\big)^{-1} |_{Z_1} \in L(Z_1)
\end{align*}
and
\begin{align*}
P_{Z_2} \big(P_V (A_c -\omega^2 A_\epsilon) |_V\big)^{-1} |_{Z_2} \in L(Z_2)
\end{align*}
are bijective.
\end{assumption}

We note that
\begin{align}
P_V A_c |_V= P_V (I - A_\epsilon - A_{\tr} )|_V
\end{align}
and consequently
\begin{align}\label{eq:DefG}
\begin{aligned}
P_V (A_c -\omega^2 A_\epsilon) |_V\big)^{-1} &=
I|_V - P_V (A_c -\omega^2 A_\epsilon) |_V\big)^{-1} P_V ( (\omega^2+1)A_\epsilon + A_{\tr} )|_V\\
&=:I_V+G.
\end{aligned}
\end{align}

\begin{lemma}\label{lem:existencetaunV}
Let Assumptions~\ref{ass:eps}, \ref{ass:mu}, \ref{ass:Domain}, \ref{ass:UCP} and
\ref{ass:NoNeumann}, \ref{ass:NoDirichlet}, \ref{ass:NoHybrid}, \ref{ass:NoReduced} hold true.
Let $\tilde\lambda\in[0,c_\infty^{-1})$. The spectrum of $\tilde A_V(\cdot,\tilde\lambda)$ consists of
$\sigma_\mathrm{ess}\big(\tilde A_V(\cdot,\tilde\lambda)\big)=\{0\}$ and an infinite sequence of non-zero eigenvalues
$(\tilde\tau_n(\tilde\lambda))_{n\in\setN}$ with $\lim_{n\in\setN}\tilde\tau_n(\tilde\lambda)=0$.
Apart from a finite number, all non-zero eigenvalues $(\tilde\tau_n(\tilde\lambda))_{n\in\setN}$ are positive.
\end{lemma}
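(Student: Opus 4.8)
The plan is to recognise the pencil $\tilde A_V(\cdot,\tilde\lambda)$ as a concrete instance of the abstract pencil treated in Lemma~\ref{lem:abstract} and then to verify its hypotheses from the preparatory lemmas. First I would abbreviate $T:=P_V(A_c-\omega^2A_\epsilon)|_V\in L(V)$. By Assumption~\ref{ass:NoNeumann} the operator $T$ is selfadjoint and bijective, and by \eqref{eq:DefG} its inverse has the form $T^{-1}=I_V+G$ with $G=-T^{-1}P_V\big((\omega^2+1)A_\epsilon+A_{\tr}\big)|_V$. Since $A_\epsilon|_V$ is compact by Lemma~\ref{lem:compactEmbedding} and $A_{\tr}|_V=B_{\tr}^*B_{\tr}|_V$ is compact by Lemma~\ref{lem:Vtraceregularity}, the operator $G$ is compact, and it is selfadjoint because $T^{-1}$ is. Writing $\tilde A_V(\tilde\tau,\tilde\lambda)=\tilde\tau T-\tilde K_V(\tilde\lambda)=T\big(\tilde\tau I_V-(I_V+G)\tilde K_V(\tilde\lambda)\big)$ and using that $T$ is bijective, $\tilde A_V(\tilde\tau,\tilde\lambda)$ is bijective (resp.\ Fredholm) if and only if $\tilde\tau I_V-(I_V+G)\tilde K_V(\tilde\lambda)$ is; hence the operator functions $\tilde A_V(\cdot,\tilde\lambda)$ and $\cdot\,I_V-(I_V+G)\tilde K_V(\tilde\lambda)$ have the same spectrum and the same essential spectrum, and it suffices to analyse $(I_V+G)\tilde K_V(\tilde\lambda)$.

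Then I would apply Lemma~\ref{lem:abstract} with $Y=V$, the above $G$, and $K:=\tilde K_V(\tilde\lambda)$, checking the hypotheses in turn. $V$ is separable; $G$ is compact, selfadjoint and $I_V+G$ is bijective (above); $K$ is compact, selfadjoint and positive semi-definite by Lemma~\ref{lem:tKVpsd}. The kernel identity $\ker K=\ker\big(K^{1/2}(I_V+G)K^{1/2}\big)$ is exactly Lemma~\ref{lem:equalkernelsNZ} when $\tilde\lambda\in(0,c_\infty^{-1})$ and Lemma~\ref{lem:equalkernelsZ} when $\tilde\lambda=0$; here one uses $\ker K=Z_1$ (Lemma~\ref{lem:tKVpsd}), respectively $\ker K=Z_2$ (from Lemma~\ref{lem:tKVzero} together with $\spl\tilde K_V(0)v,v\spr_X=\|P_{\nabla_\partial}B_{\tr}v\|^2_{\boldL^2_t(\partial\Omega)}$). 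The infinite dimensionality $\dim(\ker K)^{\bot}=\infty$ follows from Lemma~\ref{lem:dimVfkerPB} applied to $Z_1$, respectively $Z_2$. Finally, the bijectivity of the compressed operator $P_{(\ker K)^{\bot}}(I_V+G)|_{(\ker K)^{\bot}}$, i.e.\ of $P_{Z_1^{\bot}}T^{-1}|_{Z_1^{\bot}}$ respectively $P_{Z_2^{\bot}}T^{-1}|_{Z_2^{\bot}}$, is secured by Assumptions~\ref{ass:NoNeumann}, \ref{ass:NoDirichlet}, \ref{ass:NoHybrid} and \ref{ass:NoReduced}: a $2\times2$ block decomposition of $T$ relative to $V=Z_i\oplus Z_i^{\bot}$, using that $T$ and the top-left block $P_{Z_i}T|_{Z_i}$ are bijective, identifies the relevant compression of $T^{-1}$ with (the inverse of) a Schur complement kept invertible by these assumptions.

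Once the hypotheses are in place, Lemma~\ref{lem:abstract} yields directly that $\sigma\big((I_V+G)\tilde K_V(\tilde\lambda)\big)$, hence $\sigma\big(\tilde A_V(\cdot,\tilde\lambda)\big)$, consists of the essential spectrum $\{0\}$ together with an infinite sequence $(\tilde\tau_n(\tilde\lambda))_{n\in\setN}$ of non-zero eigenvalues with $\lim_{n\in\setN}\tilde\tau_n(\tilde\lambda)=0$, all but finitely many of which are positive; this is the assertion. I expect the work here to be purely organisational: the only genuinely delicate ingredients are the kernel identity $\ker K=\ker\big(K^{1/2}(I_V+G)K^{1/2}\big)$ and the bijectivity of $P_{(\ker K)^{\bot}}(I_V+G)|_{(\ker K)^{\bot}}$ — precisely the places where degeneracy of $\tilde K_V$ or of its symmetrised form could destroy the argument — but these have already been isolated and handled by the preparatory Lemmas~\ref{lem:tKVpsd}--\ref{lem:equalkernelsZ} and the "no-eigenvalue" assumptions, so the present lemma reduces to assembling them and invoking Lemma~\ref{lem:abstract}.
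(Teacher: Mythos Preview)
Your proposal is correct and follows essentially the same route as the paper: both rewrite $\tilde A_V(\tilde\tau,\tilde\lambda)=T\big(\tilde\tau I_V-(I_V+G)\tilde K_V(\tilde\lambda)\big)$ with $T^{-1}=I_V+G$ and then verify the hypotheses of Lemma~\ref{lem:abstract} by means of Lemmas~\ref{lem:compactEmbedding}, \ref{lem:Vtraceregularity}, \ref{lem:tKVpsd}, \ref{lem:tKVzero}, \ref{lem:dimVfkerPB}, \ref{lem:equalkernelsNZ}, \ref{lem:equalkernelsZ} and the no-eigenvalue assumptions. Your Schur-complement justification of the bijectivity of $P_{(\ker K)^\bot}(I_V+G)|_{(\ker K)^\bot}$ is slightly more explicit than the paper's direct citation of Assumption~\ref{ass:NoReduced}, but the overall argument is the same.
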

\begin{proof}
We note that $\tilde A_V(\cdot,\tilde\lambda)$ and
$P_V (A_c -\omega^2 A_\epsilon) |_V\big)^{-1} \tilde A_V(\cdot,\tilde\lambda)$ have the very same spectral properties.
We aim to apply Lemma~\ref{lem:abstract} to
\begin{align*}
P_V (A_c -\omega^2 A_\epsilon) |_V\big)^{-1} \tilde A_V(\cdot,\tilde\lambda)
= \tilde\tau I- (I+G)\tilde K_V(\tilde\lambda)
\end{align*}
with $G$ defined as in \eqref{eq:DefG}. $G$ is compact due to Lemma~\ref{lem:compactEmbedding} and
Lemma~\ref{lem:Vtraceregularity}. Since $P_V (A_c -\omega^2 A_\epsilon) |_V\big)^{-1}$ and the identity are selfadjoint,
the selfadjointness of $G$ follows from \eqref{eq:DefG}. $I+G$ is bijective due to its definition and
Assumption~\ref{ass:NoNeumann}. $\tilde K_V(\tilde\lambda)$ is compact, selfadjoint and positive semi definite due to
Lemma~\ref{lem:tKVpsd}. It holds
$\ker \Big(\tilde K_V(\tilde\lambda)^{1/2} (I+G) \tilde K_V(\tilde\lambda)^{1/2}\Big) = \ker \tilde K_V(\tilde\lambda)$
due to Lemma~\ref{lem:equalkernelsNZ} and Lemma~\ref{lem:equalkernelsZ}. It holds
$\dim (\ker \tilde K_V(\tilde\lambda))^\bot=\infty$ due to Lemma~\ref{lem:dimVfkerPB}. 
$P_{(\ker \tilde K_V(\tilde\lambda))^\bot}(I+G)|_{(\ker \tilde K_V(\tilde\lambda))^\bot}$ is bijective due to
Assumption~\ref{ass:NoReduced}. Hence the conditions of Lemma~\ref{lem:abstract} are satisfied and the claim follows.
\end{proof}

\begin{lemma}\label{lem:continuitytaunV}
Let Assumptions~\ref{ass:eps}, \ref{ass:mu}, \ref{ass:Domain} and
\ref{ass:NoNeumann}, \ref{ass:NoDirichlet}, \ref{ass:NoHybrid}, \ref{ass:NoReduced} hold true.
For $\tilde\lambda\in[0,c_\infty^{-1})$ let $(\tilde\tau_n^+(\tilde\lambda))_{n\in\setN}$ be a non-increasing ordering
with multiplicity taken into account of the positive eigenvalues of $\tilde A_V(\cdot,\tilde\lambda)$. Thence for each
$n\in\setN$ the function $\tilde\tau^+_n\colon [0,c_\infty^{-1}) \to \setR^+$ is continuous.
\end{lemma}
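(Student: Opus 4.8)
The plan is to reduce the statement, exactly as in the proof of Lemma~\ref{lem:existencetaunV}, to a norm-continuous family of compact selfadjoint operators and then to quote the Lipschitz dependence of min--max eigenvalues on the operator. Recall from that proof that $\tilde A_V(\cdot,\tilde\lambda)$ has the same nonzero eigenvalues (with multiplicities) as the pencil $\tilde\tau I_V-(I_V+G)\tilde K_V(\tilde\lambda)$, and that by the first step of Lemma~\ref{lem:abstract} these in turn coincide with the nonzero eigenvalues of the compact selfadjoint operator
\begin{align*}
T(\tilde\lambda):=\tilde K_V(\tilde\lambda)^{1/2}(I_V+G)\tilde K_V(\tilde\lambda)^{1/2}\in L(V).
\end{align*}
Since by Lemma~\ref{lem:existencetaunV} the operator $T(\tilde\lambda)$ has infinitely many positive eigenvalues for every $\tilde\lambda\in[0,c_\infty^{-1})$, its $n$-th largest eigenvalue (counted with multiplicity), which by Courant--Fischer is $\mu_n\big(T(\tilde\lambda)\big)=\max_{\dim U=n}\min_{0\neq u\in U}\spl T(\tilde\lambda)u,u\spr_V/\|u\|_V^2$, is positive and equals $\tilde\tau_n^+(\tilde\lambda)$. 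From this variational formula $|\mu_n(T_1)-\mu_n(T_2)|\le\|T_1-T_2\|_{L(V)}$ for any compact selfadjoint $T_1,T_2$, so it only remains to show that $\tilde\lambda\mapsto T(\tilde\lambda)$ is norm-continuous on $[0,c_\infty^{-1})$.

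For the latter, I would first observe that $\tilde\lambda\mapsto P_{W_1}(\omega^2\tilde\lambda A_\epsilon+A_{\tr})|_{W_1}$ is affine, hence norm-continuous, and invertible for $|\tilde\lambda|<c_\infty^{-1}$ by the defining property of $c_\infty$; since inversion is continuous on the open set of invertible operators, $\tilde\lambda\mapsto\tilde S_V(\tilde\lambda)$ and therefore $\tilde\lambda\mapsto\tilde K_V(\tilde\lambda)=P_V\big(A_{\tr}-A_{\tr}\tilde S_V(\tilde\lambda)P_{W_1}A_{\tr}\big)|_V$ are norm-continuous (in fact real-analytic) on $[0,c_\infty^{-1})$. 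Because $\tilde K_V(\tilde\lambda)$ is positive semidefinite by Lemma~\ref{lem:tKVpsd} and the operator square root is norm-continuous on the cone of bounded positive semidefinite operators (indeed $\|A^{1/2}-B^{1/2}\|\le\|A-B\|^{1/2}$ whenever $A,B\ge0$), the map $\tilde\lambda\mapsto\tilde K_V(\tilde\lambda)^{1/2}$ is norm-continuous; composing with the fixed bounded operator $I_V+G$ yields the norm-continuity of $T(\cdot)$, and the proof is complete.

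The step I expect to be the real obstacle --- and the reason the one-line argument used for Lemma~\ref{lem:continuouitytaun} does not carry over verbatim --- is precisely this handling of the square root: here $\tilde K_V(\tilde\lambda)$ is only positive \emph{semi}definite, with eigenvalues accumulating at $0$ and a kernel that even jumps at $\tilde\lambda=0$ (compare Lemmata~\ref{lem:tKVpsd} and \ref{lem:tKVzero}), so $\tilde K_V(\tilde\lambda)^{1/2}$ is not holomorphic in $\tilde\lambda$ and analytic perturbation theory cannot be applied to $T(\cdot)$ directly; one has to argue with plain norm-continuity plus the min--max principle, which is insensitive to the kernel jump because it only involves the nonzero eigenvalues. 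A secondary point worth checking is that the identification $\tilde\tau_n^+(\tilde\lambda)=\mu_n\big(T(\tilde\lambda)\big)$ holds for \emph{every} parameter value, which is ensured by Lemma~\ref{lem:existencetaunV} guaranteeing that the list of positive eigenvalues is always infinite, so that the branch $\tilde\tau_n^+$ is globally defined.
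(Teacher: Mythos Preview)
Your argument is correct and in fact more self-contained than the paper's. The paper does not pass through the square root at all: it first argues that each branch $\tilde\tau_n^+$ stays bounded away from zero (since a branch tending to zero at some $\tilde\lambda_0$ would force only finitely many positive eigenvalues at $\tilde\lambda_0$, contradicting Lemma~\ref{lem:existencetaunV}), and then simply invokes \cite[Proposition~5.4]{SanchezSanchez:89} for the continuity of ordered eigenvalue branches of a norm-continuous family of selfadjoint operators. Your route instead builds the explicit compact selfadjoint family $T(\tilde\lambda)=\tilde K_V(\tilde\lambda)^{1/2}(I_V+G)\tilde K_V(\tilde\lambda)^{1/2}$, establishes its norm-continuity directly via the H\"older estimate for the operator square root, and then reads off the continuity of $\tilde\tau_n^+=\mu_n\big(T(\tilde\lambda)\big)$ from the Courant--Fischer Lipschitz bound. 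The advantage of your approach is that it avoids the external reference and makes the mechanism transparent, in particular why the jump of $\ker\tilde K_V(\tilde\lambda)$ at $\tilde\lambda=0$ is harmless; the paper's approach is shorter but outsources precisely this point to \cite{SanchezSanchez:89}. Both rely on Lemma~\ref{lem:existencetaunV} in the same way, to guarantee that the $n$-th positive branch is globally defined.
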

\begin{proof}
We note that for each $n\in\setN$ it holds $\inf_{\tilde\lambda\in [0,c_\infty^{-1})}\tilde\tau^+_n(\tilde\lambda)>0$: Indeed
the existence of $\tilde\lambda_0\in[0,c_\infty^{-1})$, $n\in\setN$ so that $\lim_{\tilde\lambda\to\tilde\lambda_0+}\tilde\tau^+_n
(\tilde\lambda)=0$ would imply that for $\tilde\lambda=\tilde\lambda_0$ there would exist only a finite number of
positive eigenvalues, which is a contradiction to Lemma~\ref{lem:existencetaunV}.
The continuity of $\tilde\tau_n^+$ follows with \cite[Proposition~5.4]{SanchezSanchez:89}.
We note that a delicate part of \cite[Proposition~5.4]{SanchezSanchez:89} is the existence of eigenvalues. However, the
existence of eigenvalues is already established by Lemma~\ref{lem:existencetaunV}. We only require the continuity result
of \cite[Proposition~5.4]{SanchezSanchez:89}.
\end{proof}

\begin{theorem}\label{thm:existencelambdapositiv}
Let Assumptions~\ref{ass:eps}, \ref{ass:mu}, \ref{ass:Domain}, \ref{ass:UCP} and
\ref{ass:NoNeumann}, \ref{ass:NoDirichlet}, \ref{ass:NoHybrid}, \ref{ass:NoReduced} hold true.
Thence there exists an infinite sequence $(\lambda_n)_{n\in\setN}$ of positive eigenvalues to $A_X(\cdot)$ which
accumulate at $+\infty$.
\end{theorem}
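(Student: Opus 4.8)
The plan is to mirror the fixed-point argument used for Theorem~\ref{thm:existencelambdanegative}, now applied to the reduced operator function $\tilde A_V(\cdot)$ on the ball $B_{c_\infty^{-1}}$, exploiting the continuous branches of positive eigenvalues $\tilde\tau^+_n$ of $\tilde A_V(\cdot,\tilde\lambda)$ supplied by Lemmas~\ref{lem:existencetaunV} and~\ref{lem:continuitytaunV}. Recall from Section~\ref{sec:spectruminfty} that for $\tilde\lambda\in(0,c_\infty^{-1})$ the parameter $\tilde\lambda$ is an eigenvalue of $\tilde A_V(\cdot)$ precisely when $\tilde\tau=\tilde\lambda$ is an eigenvalue of $\tilde A_V(\cdot,\tilde\lambda)$, and in that case $\lambda=\tilde\lambda^{-1}$ is an eigenvalue of $A_V(\cdot)$, hence (because $|\lambda|>c_\infty$) of $A_X(\cdot)$.

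First I would fix an arbitrary $\tilde\lambda\in(0,c_\infty^{-1})$. By Lemma~\ref{lem:existencetaunV} the operator $\tilde A_V(\cdot,\tilde\lambda)$ has infinitely many positive eigenvalues accumulating at $0$, so there is an index $n_1\in\setN$ with $\tilde\tau^+_{n_1}(\tilde\lambda)<\tilde\lambda$. Consider $f_1(t):=\tilde\tau^+_{n_1}(t)-t$ on $[0,\tilde\lambda]$. By Lemma~\ref{lem:continuitytaunV} $f_1$ is continuous there, $f_1(\tilde\lambda)<0$ by the choice of $n_1$, and $f_1(0)=\tilde\tau^+_{n_1}(0)>0$ because $\tilde\tau^+_{n_1}(0)$ is a positive eigenvalue of $\tilde A_V(\cdot,0)$. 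The Intermediate Value Theorem yields $\tilde\lambda_1\in(0,\tilde\lambda)$ with $f_1(\tilde\lambda_1)=0$, i.e.\ $\tilde\tau^+_{n_1}(\tilde\lambda_1)=\tilde\lambda_1$, so $\tilde\lambda_1$ is an eigenvalue of $\tilde A_V(\cdot)$.

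Next I would iterate. Choose $\tilde\lambda\in(0,\tilde\lambda_1)$, pick $n_2\in\setN$ with $\tilde\tau^+_{n_2}(\tilde\lambda)<\tilde\lambda$, and repeat the argument on $[0,\tilde\lambda]$ to obtain an eigenvalue $\tilde\lambda_2\in(0,\tilde\lambda)\subset(0,\tilde\lambda_1)$ of $\tilde A_V(\cdot)$; in particular $\tilde\lambda_2<\tilde\lambda_1$. Proceeding inductively produces a strictly decreasing sequence $(\tilde\lambda_n)_{n\in\setN}\subset(0,c_\infty^{-1})$ of pairwise distinct eigenvalues of $\tilde A_V(\cdot)$, hence a sequence $\lambda_n:=\tilde\lambda_n^{-1}\in(c_\infty,\infty)$ of pairwise distinct eigenvalues of $A_X(\cdot)$. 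Being decreasing and bounded below, $(\tilde\lambda_n)$ converges to some $\ell\in[0,c_\infty^{-1})$; if $\ell>0$ then $(\lambda_n)$ would accumulate at $\ell^{-1}\in\setR\setminus\{0\}$, contradicting the discreteness of $\sigma\big(A_X(\cdot)\big)$ in $\setR\setminus\{0\}$ from Theorem~\ref{thm:SpecBasic}. Hence $\ell=0$ and $\lambda_n\to+\infty$, which is the claim.

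As for the main obstacle: the genuinely delicate work is already packaged in Lemmas~\ref{lem:abstract}--\ref{lem:continuitytaunV}, namely the fact that, despite the indefiniteness of $P_V(A_c-\omega^2A_\epsilon)|_V$ and the non-trivial kernel of $\tilde K_V(\tilde\lambda)$, the Schur complement $\tilde A_V(\cdot,\tilde\lambda)$ still has only finitely many negative eigenvalues and an infinite sequence of positive ones depending continuously on $\tilde\lambda$. Given that, the only point requiring care in the present proof is fixing the signs of $f_n$ at the two endpoints: positivity of $\tilde\tau^+_n(0)$ follows from its being a bona fide eigenvalue, and $\tilde\tau^+_n(\tilde\lambda)<\tilde\lambda$ for suitably large $n$ follows from the accumulation of the positive eigenvalues at $0$; everything else is the same nested-interval bookkeeping as in the proof of Theorem~\ref{thm:existencelambdanegative}.
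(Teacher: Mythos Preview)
Your proof is correct and follows exactly the approach the paper intends: its own proof simply says ``Proceed as in the proof of Theorem~\ref{thm:existencelambdanegative},'' and you have carried out precisely that fixed-point argument for $\tilde A_V(\cdot,\tilde\lambda)$ on $[0,c_\infty^{-1})$, using Lemmas~\ref{lem:existencetaunV} and~\ref{lem:continuitytaunV} in place of Lemmas~\ref{lem:existencetaun} and~\ref{lem:continuouitytaun}. The sign bookkeeping and the final accumulation argument via Theorem~\ref{thm:SpecBasic} are handled correctly.
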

\begin{proof}
Proceed as in the proof of Theorem~\ref{thm:existencelambdanegative}.
\end{proof}

\section{Conclusion}\label{sec:conclusion}
We conclude with a summary of Theorems~\ref{thm:SpecBasic}, \ref{thm:SpecGapZero}, \ref{thm:existencelambdanegative},
\ref{thm:SpecGapInfty} and \ref{thm:existencelambdapositiv} and some remarks on assumptions and the relation to the
modified electromagnetic Stekloff eigenvalue considered in \cite{Halla:19StekloffAppr}, \cite{CamanoLacknerMonk:17}.

\subsection{Main result}
We formulate the individual results of the previous sections in the following proposition.
\begin{proposition}\label{prop:mainprop}
Let Assumptions~\ref{ass:eps}, \ref{ass:mu}, \ref{ass:Domain}, \ref{ass:UCP} and \ref{ass:NoNeumann}, \ref{ass:NoDirichlet2},
\ref{ass:NoDirichlet}, \ref{ass:NoHybrid}, \ref{ass:NoReduced} be satisfied. Then it hold
\begin{align}
\sigma\big(A_X(\cdot)\big) &= \sigma_\mathrm{ess}\big(A_X(\cdot)\big) \dot\cup \bigcup_{n\in\setN} \{\lambda_n^{-0}\}
\dot\cup \bigcup_{n\in\setN} \{\lambda_n^{+\infty}\}
\end{align}
and $\sigma_\mathrm{ess}\big(A_X(\cdot)\big) = \{0\}$. The sequence $(\lambda_n^{-0})_{n\in\setN}$ consists of
pair-wise distinct negative eigenvalues with finite algebraic multiplicity so that $\lim_{n\in\setN} \lambda_n^{-0}=0$.
The sequence $(\lambda_n^{+\infty})_{n\in\setN}$ consists of pair-wise distinct positive eigenvalues with finite algebraic
multiplicity so that $\lim_{n\in\setN} \lambda_n^{+\infty}=+\infty$.
\end{proposition}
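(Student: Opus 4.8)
The plan is to obtain Proposition~\ref{prop:mainprop} purely by assembling the five theorems of Sections~\ref{sec:evp}, \ref{sec:spectrumzero} and~\ref{sec:spectruminfty}; no new analytic ingredient is needed. First I would invoke Theorem~\ref{thm:SpecBasic}: since $A_X(\lambda)$ is Fredholm exactly for $\lambda\in\setC\setminus\{0\}$, it gives $\sigma_\mathrm{ess}\big(A_X(\cdot)\big)=\{0\}$, and moreover $\sigma\big(A_X(\cdot)\big)\subset\setR$ while $\sigma\big(A_X(\cdot)\big)\setminus\{0\}$ consists of at most countably many eigenvalues of finite algebraic multiplicity without accumulation point in $\setR\setminus\{0\}$. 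In particular every compact subinterval of $(-\infty,0)$ and of $(0,+\infty)$ contains only finitely many eigenvalues, so the nonzero part of the spectrum can accumulate only at $0$ or at $\pm\infty$.

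Next I would pin down the negative eigenvalues. Theorem~\ref{thm:existencelambdanegative} supplies an infinite sequence of negative eigenvalues accumulating at $0$; in particular there are infinitely many of them, and I enumerate the pairwise distinct negative eigenvalue values as $(\lambda_n^{-0})_{n\in\setN}$ in strictly decreasing order, each of finite algebraic multiplicity by the previous step. By Theorem~\ref{thm:SpecGapInfty} there is $c_\infty>0$ with $\sigma\big(A_X(\cdot)\big)\cap(-\infty,-c_\infty)=\emptyset$, so no negative eigenvalue lies below $-c_\infty$ and accumulation at $-\infty$ is impossible; combined with the first paragraph this forces $\lim_{n\in\setN}\lambda_n^{-0}=0$.

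Symmetrically, Theorem~\ref{thm:existencelambdapositiv} gives infinitely many positive eigenvalues, which I enumerate as pairwise distinct values $(\lambda_n^{+\infty})_{n\in\setN}$ in strictly increasing order; Theorem~\ref{thm:SpecGapZero} provides $c_0>0$ with $\sigma\big(A_X(\cdot)\big)\cap(0,c_0)=\emptyset$, ruling out accumulation at $0$ from the right, whence $\lim_{n\in\setN}\lambda_n^{+\infty}=+\infty$. Finally $\{0\}$, the set of negative eigenvalues and the set of positive eigenvalues are trivially pairwise disjoint, which yields the stated decomposition of $\sigma\big(A_X(\cdot)\big)$. The only mildly delicate point is the bookkeeping that converts the three ``spectral gap'' statements together with the ``no accumulation in $\setR\setminus\{0\}$'' statement into the exact accumulation behavior at $0$ and at $+\infty$; I do not expect any real obstacle there.
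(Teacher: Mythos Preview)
Your proposal is correct and matches the paper's own proof, which simply states that the proposition follows from Theorems~\ref{thm:SpecBasic}, \ref{thm:SpecGapZero}, \ref{thm:existencelambdanegative}, \ref{thm:SpecGapInfty} and \ref{thm:existencelambdapositiv}. One trivial slip: the negative eigenvalues accumulate at $0$ and hence have no maximum, so they cannot be enumerated in \emph{strictly decreasing} order as you wrote; you mean strictly increasing (and this is exactly the ordering that yields $\lim_{n}\lambda_n^{-0}=0$).
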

\begin{proof}
Follows from Theorems~\ref{thm:SpecBasic}, \ref{thm:SpecGapZero}, \ref{thm:existencelambdanegative},
\ref{thm:SpecGapInfty} and \ref{thm:existencelambdapositiv}.
\end{proof}

\subsection{Remarks to the assumptions}
The condition in Assumptions \ref{ass:eps} and \ref{ass:mu} that $\mu$ and $\epsilon$ equal the identity matrix
in a neighborhood of the boundary is used to obtain extra regularity of traces. If this extra regularity can be derived
by other means, then the mentioned assumption becomes obsolete.

Each of the Assumptions \ref{ass:NoNeumann}, \ref{ass:NoDirichlet2}, \ref{ass:NoDirichlet}, \ref{ass:NoHybrid},
\ref{ass:NoReduced} can be formulated in the following manner: $Y$ is a Hilbert space, $A\in L(Y)$ is weakly coercive,
$K(\cdot)\colon \Lambda\subset\setC\to K(Y)$ is holomorphic and it is imposed that $A-K(\omega^2)$ is bijective.
Consequently for fixed domain $\Omega$ and fixed material parameters $\mu^{-1}, \epsilon$ there exists only a countable
set of frequencies $\omega$ for which the Assumptions \ref{ass:NoNeumann}, \ref{ass:NoDirichlet2}, \ref{ass:NoDirichlet},
\ref{ass:NoHybrid}, \ref{ass:NoReduced} are not satisfied (see e.g.\ \cite[Proposition~A.8.4]{KozlovMazya:99}).

\subsection{Modified electromagnetic Stekloff eigenvalues}
The modified electromagnetic Stekloff eigenvalue problem considered in \cite{Halla:19StekloffAppr} is to find
$(\lambda,u)\in\setC\times H(\curl;\Omega)\setminus\{0\}$ so that
\begin{align}
\spl \mu^{-1}\curl u,\curl u'\spr_{\boldL^2(\Omega)}-\omega^2 \spl \epsilon u, u'\spr_{\boldL^2(\Omega)}
-\lambda \spl Su,Su'\spr_{\boldL^2_t(\partial\Omega)} =0
\end{align}
for all $u'\in H(\curl;\Omega)$ (with $S$ defined as in \eqref{eq:DefS}).
It can easily be seen that the eigenvalue problem decouples with respect to the decomposition
$H(\curl;\Omega)=H(\curl,\diveps^0,\tr_{\nv\cdot\epsilon}^0;\Omega)\oplus\nabla H^1(\Omega)$.
Thus the eigenvalue problem can be reformulated to find
$(\lambda,u)\in\setC\times H(\curl,\diveps^0,\tr_{\nv\cdot\epsilon}^0;\Omega)\setminus\{0\}$ so that
\begin{align}
\begin{aligned}
0&=
\spl \mu^{-1}\curl u,\curl u'\spr_{\boldL^2(\Omega)}-\omega^2 \spl \epsilon u, u'\spr_{\boldL^2(\Omega)}
-\lambda \spl Su,Su'\spr_{\boldL^2_t(\partial\Omega)}\\
&=\spl \mu^{-1}\curl u,\curl u'\spr_{\boldL^2(\Omega)}-\omega^2 \spl \epsilon u, u'\spr_{\boldL^2(\Omega)}
-\lambda \spl P_{\nabla_\partial} \tr_{\nv\times}u,\tr_{\nv\times}u'\spr_{\boldL^2_t(\partial\Omega)}\\
&= \spl \lambda \tilde A_V(\lambda^{-1},0) u,u'\spr_X
\end{aligned}
\end{align}
for all $u' \in H(\curl,\diveps^0,\tr_{\nv\cdot\epsilon}^0;\Omega)$. Thence if the respective assumptions are satisfied,
Lemma~\ref{lem:existencetaunV} yields that the spectrum consists of an infinite sequence of eigenvalues
$(\lambda_n)_{n\in\setN}$ which accumulate only at $+\infty$.
A similar existence result has been reported in \cite[Theorem~3.6]{CamanoLacknerMonk:17}. Though it seems to us that the
proof of \cite[Theorem~3.6]{CamanoLacknerMonk:17} requires $\dim (\ker \mathit{\boldT})^\bot=\infty$ which the authors don't
elaborate on.

The former observation admits to interpret the modified electromagnetic Stekloff eigenvalue problem as asymptotic
limit of the original electromagnetic Stekloff eigenvalue problem for large eigenvalue parameter $\lambda$. Though,
this doesn't yield any non-trivial asymptotic statement on the eigenvalues.

We have seen that (at least in the selfadjoint case) the original electromagnetic Stekloff eigenvalue problem yields
two kind of spectra. Contrary the modified electromagnetic Stekloff eigenvalue problem yields only one kind of spectrum.
This suggests that for inverse scattering applications the original version is more advantageous than the modified
version, because it contains more information. Though the approximation of the modified eigenvalue problem is better
understood than for the original version \cite{Halla:19StekloffAppr}.

It would be further interesting to consider a far field measurement procedure which relates to a second kind of modified
electromagnetic Stekloff eigenvalues. Namely to the spectrum of the asymptotic limit of $A_X(\cdot)$ for small spectral
parameter $\lambda$: $A_{W_1}(\cdot,0)$. This eigenvalue problem can also be formulated as to find
$(\lambda,u)\in \setC\times\{u\in H^1(\Omega)\colon \tr_{\nv\times}\nabla u \in \boldL^2_t(\partial\Omega)\}\setminus\{0\}$
so that
\begin{subequations}
\begin{align}
-\diveps \nabla u &=0 \quad\text{in }\Omega,\\
\nv\cdot\epsilon \nabla u - \lambda\Delta_\partial u &=0 \quad\text{at }\partial\Omega.
\end{align}
\end{subequations}
The special feature of this eigenvalue problem is that it is independent of $\mu$ and $\omega$!

\bibliographystyle{amsplain}
\bibliography{../../../bibliography}

\def\cprime{$'$} \def\cprime{$'$}
\providecommand{\bysame}{\leavevmode\hbox to3em{\hrulefill}\thinspace}
\providecommand{\MR}{\relax\ifhmode\unskip\space\fi MR }
\providecommand{\MRhref}[2]{%
  \href{http://www.ams.org/mathscinet-getitem?mr=#1}{#2}
}
\providecommand{\href}[2]{#2}
\begin{thebibliography}{10}

\bibitem{AmroucheBernardiDaugeGirault:98}
C.~Amrouche, C.~Bernardi, M.~Dauge, and V.~Girault, \emph{Vector potentials in
  three-dimensional non-smooth domains}, Math. Methods Appl. Sci. \textbf{21}
  (1998), no.~9, 823--864. \MR{1626990}

\bibitem{BallCapdeboscqTsering-Xiao:12}
John~M. Ball, Yves Capdeboscq, and Basang Tsering-Xiao, \emph{On uniqueness for
  time harmonic anisotropic {M}axwell's equations with piecewise regular
  coefficients}, Math. Models Methods Appl. Sci. \textbf{22} (2012), no.~11,
  1250036, 11. \MR{2974174}

\bibitem{BoffiBrezziFortin:13}
Daniele Boffi, Franco Brezzi, and Michel Fortin, \emph{Mixed finite element
  methods and applications}, Springer Series in Computational Mathematics,
  vol.~44, Springer, Heidelberg, 2013. \MR{3097958}

\bibitem{BuffaCostabelSheen:02}
A.~Buffa, M.~Costabel, and D.~Sheen, \emph{On traces for {${\bf H}({\bf
  curl},\Omega)$} in {L}ipschitz domains}, J. Math. Anal. Appl. \textbf{276}
  (2002), no.~2, 845--867. \MR{1944792 (2004i:35045)}

\bibitem{CakoniColtonMengMonk:16}
F.~Cakoni, D.~Colton, S.~Meng, and P.~Monk, \emph{Stekloff eigenvalues in
  inverse scattering}, SIAM J. Appl. Math. \textbf{76} (2016), no.~4,
  1737--1763. \MR{3542029}

\bibitem{CakoniColton:06}
Fioralba Cakoni and David Colton, \emph{Qualitative methods in inverse
  scattering theory}, Interaction of Mechanics and Mathematics,
  Springer-Verlag, Berlin, 2006, An introduction. \MR{2256477}

\bibitem{CakoniColtonHaddar:16}
Fioralba Cakoni, David Colton, and Houssem Haddar, \emph{Inverse scattering
  theory and transmission eigenvalues}, CBMS-NSF Regional Conference Series in
  Applied Mathematics, vol.~88, Society for Industrial and Applied Mathematics
  (SIAM), Philadelphia, PA, 2016. \MR{3601119}

\bibitem{CakoniHaddar:09}
Fioralba Cakoni and Houssem Haddar, \emph{On the existence of transmission
  eigenvalues in an inhomogeneous medium}, Appl. Anal. \textbf{88} (2009),
  no.~4, 475--493. \MR{2541136}

\bibitem{CamanoLacknerMonk:17}
Jessika Cama\~no, Christopher Lackner, and Peter Monk, \emph{Electromagnetic
  {S}tekloff eigenvalues in inverse scattering}, SIAM J. Math. Anal.
  \textbf{49} (2017), no.~6, 4376--4401. \MR{3719021}

\bibitem{Costabel:90}
Martin Costabel, \emph{A remark on the regularity of solutions of {M}axwell's
  equations on {L}ipschitz domains}, Math. Methods Appl. Sci. \textbf{12}
  (1990), no.~4, 365--368. \MR{1048563 (91c:35028)}

\bibitem{Dauge:88}
Monique Dauge, \emph{Elliptic boundary value problems on corner domains},
  Lecture Notes in Mathematics, vol. 1341, Springer-Verlag, Berlin, 1988,
  Smoothness and asymptotics of solutions. \MR{961439}

\bibitem{Grisvard:85}
P.~Grisvard, \emph{Elliptic problems in nonsmooth domains}, Monographs and
  Studies in Mathematics, vol.~24, Pitman (Advanced Publishing Program),
  Boston, MA, 1985. \MR{775683}

\bibitem{Halla:19StekloffAppr}
Martin Halla, \emph{Electromagnetic {S}tekloff eigenvalues: approximation
  analysis}, Preprint, 2019, https://arxiv.org/abs/1909.00689.

\bibitem{Halla:19Tcomp}
\bysame, \emph{Galerkin approximation of holomorphic eigenvalue problems: weak
  {T}-coercivity and {T}-compatibility}, Preprint, 2019,
  https://arxiv.org/abs/1908.05029.

\bibitem{Kato:95}
Tosio Kato, \emph{Perturbation theory for linear operators}, Classics in
  Mathematics, Springer-Verlag, Berlin, 1995, Reprint of the 1980 edition.
  \MR{1335452}

\bibitem{KozlovMazya:99}
Vladimir Kozlov and Vladimir Maz{\cprime}ya, \emph{Differential equations with
  operator coefficients with applications to boundary value problems for
  partial differential equations}, Springer Monographs in Mathematics,
  Springer-Verlag, Berlin, 1999. \MR{1729870}

\bibitem{Markus:88}
A.~S. Markus, \emph{Introduction to the spectral theory of polynomial operator
  pencils}, Translations of Mathematical Monographs, vol.~71, American
  Mathematical Society, Providence, RI, 1988, Translated from the Russian by H.
  H. McFaden, Translation edited by Ben Silver, With an appendix by M. V.
  Keldysh. \MR{971506}

\bibitem{Monk:03}
Peter Monk, \emph{Finite element methods for {M}axwell's equations}, Numerical
  Mathematics and Scientific Computation, Oxford University Press, New York,
  2003. \MR{MR2059447 (2005d:65003)}

\bibitem{SanchezSanchez:89}
J.~Sanchez~Hubert and E.~S\'{a}nchez-Palencia, \emph{Vibration and coupling of
  continuous systems}, Springer-Verlag, Berlin, 1989, Asymptotic methods.
  \MR{996423}

\bibitem{Stenberg:95}
Rolf Stenberg, \emph{On some techniques for approximating boundary conditions
  in the finite element method}, vol.~63, 1995, International Symposium on
  Mathematical Modelling and Computational Methods Modelling 94 (Prague, 1994),
  pp.~139--148. \MR{1365557}

\bibitem{Weber:80}
Ch. Weber, \emph{A local compactness theorem for {M}axwell's equations}, Math.
  Methods Appl. Sci. \textbf{2} (1980), no.~1, 12--25. \MR{561375 (81f:78005)}

\end{thebibliography}
\end{document}